\newcommand\cyr{%
 \renewcommand\rmdefault{wncyr}%
 \renewcommand\sfdefault{wncyss}%
 \renewcommand\encodingdefault{OT2}%
\normalfont\selectfont} \DeclareTextFontCommand{\textcyr}{\cyr}
\newtheorem{theorem}{Theorem}
\newtheorem{lemma}[theorem]{Lemma}
\newtheorem{corollary}[theorem]{Corollary}
\newtheorem{proposition}[theorem]{Proposition}
\newtheorem{remark}[theorem]{Remark}
\newtheorem{problem}[theorem]{Problem}
\long\def\symbolfootnote[#1]#2{\begingroup%
\def\thefootnote{\fnsymbol{footnote}}\footnote[#1]{#2}\endgroup}
\title{Sidonicity and variants of Kaczmarz's problem}
\author{Jean Bourgain\thanks{Partially supported by the NSF grant DMS-1301619.} \and Mark Lewko\thanks{Partially supported by a NSF Postdoctoral Fellowship, DMS-1204206.}}
\date{}
\begin{document}

\maketitle

\begin{abstract}We prove that a uniformly bounded system of orthonormal functions satisfying the $\psi_2$ condition: (1) must contain a Sidon subsystem of proportional size, (2) must satisfy the Rademacher-Sidon property, and (3) must have its five-fold tensor satisfy the Sidon property.  On the other hand, we construct a uniformly bounded orthonormal system that satisfies the $\psi_2$ condition but which is not Sidon. These problems are variants of Kaczmarz's Scottish book problem (problem 130) which, in its original formulation, was answered negatively by Rudin.  A corollary of our argument is a new elementary proof of Pisier's theorem that a set of characters satisfying the $\psi_2$ condition is Sidon.
\end{abstract}
\symbolfootnote[0]{2010 Mathematics Subject Classification 43A46, 42C05}

\section{Introduction}
Let $(\Omega,\mu)$ denote a probability space and  let $\{\phi_1,\phi_2,\ldots \}$ denote an orthonormal system (OS) of complex-valued functions on $\Omega$. A uniformly bounded OS is said to be Sidon with constant $\gamma$ if for all complex numbers $\{a_j\}$ one has
\begin{equation}\label{eq:Sidon}
 \sup_{x \in \Omega}| \sum_{j\in \mathbb{N}} a_j \phi_j(x)| \geq \gamma \sum_{j\in \mathbb{N}} |a_j|.
\end{equation}
Similarly, we will say that a system is Rademacher-Sidon with constant $\tilde{\gamma}$ if one has the inequality inequality
\begin{equation}\label{eq:RandSidon}
\int \sup_{x \in \Omega}| \sum_{j \in \mathbb{N}} r_j(\omega) a_j \phi_j(x)|d\omega \geq \tilde{\gamma} \sum_{j\in \mathbb{N}} |a_j|
\end{equation}
where $r_n$ denote independent Rademacher functions. Clearly if an OS is Sidon it is also Rademacher-Sidon. As we will see, the converse is not true. Sidonicity has typically been studied in the context of characters on groups. Indeed the reader may be more familiar with the terminology ``Sidon set" which refers to an OS comprised of a set of characters on a group. An introduction to the theory of Sidon sets may be found in \cite{GH} and \cite{LR}. The Sidon property \eqref{eq:Sidon}, however, can be studied in the more general setting of uniformly bounded systems. Our interest here will be the following question of S. Kaczmarz posed as Problem 130 in the Scottish book.
\begin{problem}\label{pro:Kacz}Let $\{\phi_n\}$ be a lacunary system of uniformly bounded orthogonal functions. Does there exists a constant $\gamma >0$, such that for every finite system of numbers $a_1,a_2,\ldots,a_n$ we have
$$ \max_{t} | a_1 \phi_1(t) + \ldots + a_n \phi_n(t) | \geq \gamma \sum_{j=1}^{n} |a_j|.$$
\end{problem}
A remark after the question defines a system to be lacunary if, for all $p>2$, there is a finite constant $M_p$ such that
 $$|| \sum_{j\in \mathbb{N}} a_j \phi_j ||_{L^{p}} \leq M_{p} \left( \sum_{j\in \mathbb{N}} |a_j|^2 \right)^{1/2}$$
holds for every sequence $\{a_n\}$.

In more modern language one might say that $\{\phi_n\}$ is a $\Lambda(p)$ system for every $p>2$. Such systems are sometimes referred to as $\Lambda(\infty)$. As we will explain, an example of Rudin provides a negative answer to this problem. The subsequent developments in the character setting suggest several natural relaxations, which we will study here.

Let us recall the development of the theory of Sidon sets/systems in the character setting. In 1960 Rudin introduced $\Lambda(p)$ sets and constructed a subset of the integers which is $\Lambda(\infty)$ but which is not Sidon. See Section 3.2 and Theorem 4.11 of \cite{Rudin}. This provides a negative answer to Kaczmarz's problem, although there is no evidence there that Rudin was aware of the problem's provenance. We will  briefly describe Rudin's construction. He first proved that a Sidon set must be $\Lambda(\infty)$ and, more restrictively, the set's $\Lambda(p)$ constants must satisfy $M_{p}\lesssim p^{1/2}$. From this he deduced that the size of the intersection of a Sidon set with an arithmetic progression of size $n$ must be $\lesssim \log n$. Rudin was then able to give a combinatorial construction of a set which (1) had too large of an intersection with a sequence of arithmetic progressions to be Sidon, yet 2) was $\Lambda(p)$ for all $p$. He established the second property by combinatorial considerations after expanding out $L^p$ norms in the case of even integer exponents.

On the other hand, much in the spirit of Kaczmarz's problem, Rudin asked if the stronger condition $M_{p}\lesssim p^{1/2}$ characterizes Sidon sets. In 1975 Rider \cite{Rider} proved that the Sidon condition \eqref{eq:Sidon} is equivalent to the (superficially) weaker Rademacher-Sidon condition \eqref{eq:RandSidon}. In 1978 Pisier \cite{Pisier} proved that Rudin's condition $M_{p}\lesssim p^{1/2}$ implies the Rademacher-Sidonicity property. Collectively these results show that Rudin's condition characterizes Sidonicity in the character setting. We note that both Rider's and Pisier's arguments make essential use of properties of characters. It is also worth noting that the first author \cite{BourRiesz} obtained a different proof of Pisier's theorem in 1983. The approach there, however, also relies on the homomorphism property of characters.

It is well known that Rudin's condition $M_{p} \leq C \sqrt{p}$ is equivalent to the condition that
$$|| \sum_{j} a_j \phi_j ||_{\psi_2} \leq C' \left( \sum_{j} |a_j|^2 \right)^{1/2} $$
where $|| \cdot ||_{\psi_2}$ is the Orlicz norm associated to the function $\psi_2(x) := e^{|x|^2}-1$. See, for example, Lemma 16 of \cite{Lewko}. We will refer to this condition as the $\psi_2(C')$ condition.

It is natural to ask how much of this theory can be generalized to arbitrary bounded orthonromal systems. Clearly Rudin's theorem that Sidonicity implies $\psi_2$ cannot hold in this generality. This can be seen by considering the direct product of a Sidon set/character system with a complete bounded orthonomal system. In the other direction, a natural relaxation of Kaczmarz's problem would be to ask if the $\psi_2$ condition implies Sidonicity in the case of general uniformly bounded orthonormal systems. Our first result is a construction of an OS that gives a negative answer to this question.
\begin{theorem}\label{thm:CE}For all large $n$, there exists a real-valued OS $\{\phi_0,\phi_1,\ldots, \phi_n \}$ with $n+1$ elements satisfying $||\phi_j ||_{L^{\infty}} \leq 7$  and satisfying the $\psi_2(C)$ condition with some universal constant $C$, and such that
$$ || \sum_{j=0}^{n} a_j \phi_j ||_{L^{\infty}} \lesssim \frac{1}{\sqrt{\log n}} \sum_{j=0}^{n}|a_j|$$
for some choice $\{a_j\}$.
\end{theorem}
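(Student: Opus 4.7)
My plan is a randomized construction. Let $M$ be chosen with $\log M\asymp n/\log n$, take $\Omega=\{1,\dots,M\}$ with uniform probability measure $\mu$, and let $\phi_0,\phi_1,\dots,\phi_n\colon\Omega\to\{-1,+1\}$ be independent uniformly random functions. The $L^\infty$ bound $\|\phi_j\|_\infty=1$ is automatic, safely below $7$; the substantive content is to verify that for a typical realization of $\phi$ the system is (almost) orthonormal, satisfies the $\psi_2$ condition with a universal constant, and fails the Sidon inequality at $a_0=\dots=a_n=1$ with ratio $\lesssim 1/\sqrt{\log n}$.

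The failure of Sidonicity is a one-line Hoeffding estimate: at each fixed $x\in\Omega$, the quantity $\sum_{j=0}^n\phi_j(x)$ is a sum of $n+1$ i.i.d.\ Rademachers, so a union bound over the $M$ points of $\Omega$ yields
\[
\Bigl\|\sum_j\phi_j\Bigr\|_{L^\infty}\le C\sqrt{n\log M}\lesssim \frac{n}{\sqrt{\log n}}
\]
with overwhelming probability, which is the required inequality since $\sum_j|a_j|=n+1$. Approximate orthonormality is equally direct: Hoeffding gives $\langle\phi_i,\phi_j\rangle=O(M^{-1/2})$, so the Gram matrix satisfies $\|G-I\|_{\mathrm{op}}=O(\sqrt{n/M})=o(1)$. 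Replacing $\phi_j$ by $\tilde\phi_j:=(G^{-1/2}\phi)_j$ yields a genuine orthonormal system whose $L^\infty$ norms are $1+o(1)$ (each column of $G^{-1/2}$ has $\ell^1$-norm $1+o(1)$) and whose $\psi_2$ and Sidon-failure estimates transfer up to a $1+o(1)$ factor.

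The one nontrivial point is the uniform $\psi_2$ estimate. For fixed $a\in\mathbb{R}^{n+1}$ and each fixed $x$, the value $f_a(x):=\sum_j a_j\phi_j(x)$ is (as a function of $\phi$) distributed as a weighted Rademacher sum, so Khintchine gives $P_\phi(|f_a(x)|>s)\le 2 e^{-s^2/(C\|a\|_2)^2}$. Since the $M$ indicators $\mathbf 1_{|f_a(x)|>s}$ are independent across $x$, a standard Chernoff bound yields the empirical tail $\mu\{|f_a|>s\}\le 4 e^{-s^2/(C\|a\|_2)^2}$ for every $s$ with a failure probability exponentially small in $M$, hence $\|f_a\|_{\psi_2(\mu)}\le C'\|a\|_2$. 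To upgrade this to a uniform statement in $a$, one takes a sufficiently fine net on the unit sphere in $\mathbb{R}^{n+1}$ (of size $e^{O(n\log n)}$, taking $\varepsilon\asymp 1/\sqrt n$), applies the tail bound at each net point, and extends to a general $a$ using the Lipschitz estimate $|f_a-f_{a'}|_\infty\le \sqrt n\,\|a-a'\|_2$ that comes from $\|\phi_j\|_\infty\le 1$. The calibration $\log M\asymp n/\log n$ is dictated by the balance between $\sqrt{n\log M}\lesssim n/\sqrt{\log n}$ (strong Sidon failure) and $M$ being large enough for the net-based union bound to succeed; keeping all of these estimates simultaneously compatible is the main technical delicacy.
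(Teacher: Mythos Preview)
There is a genuine, in fact deterministic, obstruction: a $\pm1$-valued system of $n+1$ functions on $M$ points can never satisfy the $\psi_2$ condition with a universal constant once $\log M=o(n)$. Fix any realization of $(\phi_j)$ and any $x_0\in\Omega$, and take $a_j:=\phi_j(x_0)\in\{-1,+1\}$. Then $\|a\|_2=\sqrt{n+1}$ while $f_a(x_0)=\sum_j\phi_j(x_0)^2=n+1$, so the single atom $x_0$ already contributes $M^{-1}\exp\bigl((n+1)^2/K^2\bigr)$ to $\int e^{|f_a|^2/K^2}\,d\mu$. Forcing this term below $1$ requires $K\ge (n+1)/\sqrt{\log M}$, hence
\[
\frac{\|f_a\|_{\psi_2}}{\|a\|_2}\;\gtrsim\;\sqrt{\frac{n}{\log M}}\;\asymp\;\sqrt{\log n}
\]
under your calibration $\log M\asymp n/\log n$; the Gram--Schmidt correction $G^{-1/2}$ is a $(1+o(1))$-isomorphism and does not change this. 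The slip in your argument is the assertion that the Chernoff bound on the empirical tail at level $s$ has failure probability ``exponentially small in $M$'': it is only of order $\exp\bigl(-cM\,e^{-s^2/(C\|a\|_2)^2}\bigr)$, which says nothing once $s\gtrsim\sqrt{\log M}\,\|a\|_2$, and the bad coefficient vector above lives at $s\sim\sqrt{n}\,\|a\|_2\gg\sqrt{\log M}\,\|a\|_2$. Raising $M$ to $e^{cn}$ would repair the $\psi_2$ side, but then $\sqrt{n\log M}\asymp n$ and your Sidon-failure ratio degrades to order $1$; the two requirements cannot be balanced by this random model.

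The paper resolves this tension by an explicit construction rather than randomness. On a product space it sets $\phi_i\approx r_i-\tfrac{\sqrt{\log n}}{\sqrt n}\,\sigma_iW_i$, with the signs $\sigma_i$ chosen so that $\sum_i\sigma_iW_i$ is a Walsh Rudin--Shapiro polynomial of sup-norm $O(\sqrt n)$, and adjoins one further function $\phi_0$ engineered so that the specific choice $a_0=-1/\sqrt{\log n}$, $a_i=1/n$ makes the Rademacher parts cancel exactly, leaving only a Rudin--Shapiro residue of size $O(1/\sqrt{\log n})$. The $\psi_2$ bound is inherited from the dominant Rademacher term via Khintchine (together with the easy estimate $\tfrac{\sqrt{\log n}}{\sqrt n}\|\sum a_i W_i\|_p\lesssim\sqrt p\,\|a\|_2$); the Sidon failure comes from a single designed cancellation, not from a global probabilistic estimate.
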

This construction makes essential use of Rudin-Shapiro-type polynomials.

On the other hand, the following result provides a generalization of Pisier's theorem to general uniformly bounded orthonormal systems.
\begin{theorem}\label{thm:tensor}Let $\{\phi_j\}$ be a $\psi_2$ uniformly bounded OS. Then the OS obtained as a five-fold tensor, $\{\Phi_j := \phi_j\otimes \phi_j \otimes \phi_j \otimes \phi_j\otimes \phi_j  \}$, is Sidon.
\end{theorem}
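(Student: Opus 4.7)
The plan is to derive the tensor Sidon property from the Rademacher--Sidon inequality already established in this paper for any $\psi_2$ uniformly bounded OS, using the four extra tensor slots as a source of function-valued Rademacher-like signs. Writing $\xi_j(x_2,\ldots,x_5):=\prod_{k=2}^{5}\phi_j(x_k)$, one has
$$\Bigl\|\sum_j a_j\Phi_j\Bigr\|_{L^\infty(\Omega^5)}=\sup_{(x_2,\ldots,x_5)\in\Omega^4}\;\sup_{x_1\in\Omega}\Bigl|\sum_j a_j\,\xi_j(x_2,\ldots,x_5)\,\phi_j(x_1)\Bigr|,$$
and the Sidon bound will follow if the outer supremum over $(x_2,\ldots,x_5)$ manages to select a sign profile against which the Rademacher--Sidon lower bound for $\{\phi_j\}$ can be applied.

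The key step I would pursue is the comparison
$$\int_{\Omega^4}\sup_{x_1}\Bigl|\sum_j a_j\,\xi_j\,\phi_j(x_1)\Bigr|\,dx_2\cdots dx_5\;\gtrsim\;\int_0^1\sup_{x_1}\Bigl|\sum_j r_j(\omega)\,a_j\,\phi_j(x_1)\Bigr|\,d\omega,$$
after which the tensor $L^\infty$ norm dominates the left side and the right side is $\gtrsim\sum|a_j|$ by Rademacher--Sidon. The $\xi_j$ are bounded, mean-zero, and orthonormal ($\int \xi_j\bar\xi_k=\delta_{jk}$), and a one-shot application of the $\psi_2$ hypothesis in the variable $x_2$, with $x_3,x_4,x_5$ frozen into coefficients $b_j=a_j\prod_{k=3}^{5}\phi_j(x_k)$, yields $\|\sum b_j\xi_j\|_{\psi_2(\Omega^4)}\lesssim\|b\|_2$, matching the sub-Gaussian norm of the corresponding Rademacher sum. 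This upper bound, combined with a matching Paley--Zygmund-type anti-concentration $\|\sum b_j\xi_j\|_{L^4(\Omega^4)}\le C\|b\|_2$, should deliver the comparison via a standard process-comparison argument applied fiberwise in $x_1$.

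The main obstacle is the anti-concentration half of this comparison. Expanding $\|\sum b_j\xi_j\|_{L^4(\Omega^4)}^4$ produces
$$\sum_{j_1,j_2,j_3,j_4}b_{j_1}b_{j_2}\bar b_{j_3}\bar b_{j_4}\Bigl(\int_\Omega \phi_{j_1}\phi_{j_2}\bar\phi_{j_3}\bar\phi_{j_4}\Bigr)^{\!4},$$
and controlling the fourth power of the four-fold $\phi$-moment against $\|b\|_2^4$ requires combining the $\psi_2$ condition (which bounds the moment itself, not its fourth power) with the specific tensor structure. The combinatorial bookkeeping in this expansion --- especially the handling of near-diagonal contributions, where the $\phi$-moments fail to be small --- is where I expect the real technical work to lie, and it is also where the particular multiplicity five seems to find its justification: one ``free'' slot $x_1$ for the variable against which Rademacher--Sidon is applied, plus four slots matching the quartic moment expansion required to push through the comparison.
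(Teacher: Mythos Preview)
Your plan has a genuine gap at its central step. The comparison
\[
\int_{\Omega^4}\sup_{x_1}\Bigl|\sum_j a_j\,\xi_j\,\phi_j(x_1)\Bigr|\;\gtrsim\;\int\sup_{x_1}\Bigl|\sum_j r_j(\omega)\,a_j\,\phi_j(x_1)\Bigr|\,d\omega
\]
is a \emph{lower} bound on a non-Gaussian sup-average in terms of the Rademacher one, and there is no ``standard process-comparison argument'' that delivers this direction. Preston's theorem and the generic chaining machinery only yield \emph{upper} bounds for sub-Gaussian processes; the matching lower bound is special to the Gaussian case (majorizing measures). What you are effectively asking for---that a bounded orthonormal $\psi_2$ system may replace Rademachers in a normed average from below---is precisely the reverse of Theorem~\ref{thm:RadMajor} and is stated as the open Problem~\ref{pro:Ave} in the paper. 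Your Paley--Zygmund idea does not close this: an inequality $\|\sum b_j\xi_j\|_{L^4}\le C\|b\|_2$ is an upper moment bound, and even if it gave pointwise-in-$x_1$ anti-concentration for $\sum_j a_j\xi_j\phi_j(x_1)$, the favorable events for different $x_1$ need not overlap, so no conclusion about $\sup_{x_1}$ follows. (Note also that the $\xi_j$ are neither independent nor, in general, mean-zero, so even the soft symmetric-independent heuristics are unavailable.)

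The paper's proof proceeds along an entirely different line: it does not pass through Rademacher--Sidon at all. Using the martingale-difference approximation (Lemma~\ref{lem:approxCo}/Lemma~\ref{lem:Coeff}), one replaces a proportional-weight subsystem $\{\phi_j\}_{j\in A}$ by $\{\theta_j\}_{j\in A}$ with $\|\phi_j-\theta_j\|_1\le\epsilon$, and then \emph{constructs an explicit dual measure} on $\Omega^5$ as a Riesz product in the $\theta_j$'s (with auxiliary Rademachers to kill cross-terms). Testing $\sum a_i\Phi_i$ against this measure, the diagonal contribution is $\gtrsim\delta\sum_{i\in A}|a_i|$, while off-diagonal and higher-order errors are controlled by a Bessel-type bound $\sum_S C^{-4|S|}|\langle\nu_S,\phi_i\rangle|^2\le 1$ together with orthogonality. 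The number five enters for a concrete reason that is different from yours: two tensor factors are spent on this Bessel $\ell^2$ estimate, and the remaining three produce an error of size $\epsilon^3$, which beats the $\epsilon^2(\log C/\epsilon)^{-1}$ density coming from the martingale approximation; with four factors the error would be $\epsilon^2$ and the argument would fail (see Remark after the proof of Theorem~\ref{thm:tensorR}).
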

Indeed using the homomorphism property, it easily follows that a $\psi_2$ system of characters must be Sidon. We will also show that the Sidonictiy (or Rademacher-Sidonicty) of a tensor system has the following implication for the system itself.
\begin{theorem}\label{thm:tensorImplies}If the k-fold tensor of an OS is Rademacher-Sidon then the system itself is Rademacher-Sidon.
\end{theorem}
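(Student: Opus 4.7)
The plan is to deduce the Rademacher-Sidon inequality for $\{\phi_j\}$ from the corresponding inequality for its $k$-fold tensor $\{\Phi_j\}$ via Kahane's contraction principle, by establishing a comparison of the form
\[
\mathbb{E}_\omega \sup_{\vec x \in \Omega^k} |\sum_j r_j(\omega) a_j \Phi_j(\vec x)| \;\leq\; C_k \mathbb{E}_\omega \sup_{x \in \Omega} |\sum_j r_j(\omega) a_j \phi_j(x)|,
\]
which, combined with the tensor Rademacher-Sidon hypothesis, immediately yields the one-variable inequality with a loss depending only on $k$ and $M := \sup_j\|\phi_j\|_\infty$.

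The starting point is the observation that for any fixed $\vec y = (y_2, \ldots, y_k) \in \Omega^{k-1}$, the multipliers $\beta_j(\vec y) := \prod_{i=2}^k \phi_j(y_i)$ satisfy $|\beta_j(\vec y)| \leq M^{k-1}$. Kahane's contraction principle, applied to the $L^\infty(\Omega)$-valued Rademacher sum with complex multipliers $\beta_j(\vec y)$, yields
\[
\mathbb{E}_\omega \sup_x |\sum_j r_j(\omega) a_j \beta_j(\vec y)\phi_j(x)| \;\leq\; C M^{k-1} \mathbb{E}_\omega \sup_x |\sum_j r_j(\omega) a_j \phi_j(x)|
\]
uniformly in $\vec y$. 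Since the tensor evaluation factors as
\[
\sup_{\vec x \in \Omega^k}|\sum_j r_j a_j \Phi_j(\vec x)| \;=\; \sup_{\vec y \in \Omega^{k-1}} \sup_{x \in \Omega} |\sum_j r_j a_j \beta_j(\vec y)\phi_j(x)|,
\]
what remains is to transfer the fixed-$\vec y$ bound to the corresponding bound with the outer supremum over $\vec y$.

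This last step is the main obstacle: Jensen gives only $\mathbb{E}_\omega \sup_{\vec y} \geq \sup_{\vec y}\mathbb{E}_\omega$, which is the wrong direction. To overcome this, I would introduce an auxiliary independent Rademacher sequence $\{r'_j\}$ and exploit the fact that $\{r_j r'_j\}$ is again a Rademacher sequence with the same distribution, so that the tensor-RS hypothesis remains valid after this insertion. A conditional decoupling argument then allows one to essentially replace the $\omega$-dependent maximizer $\vec y^*(\omega)$ by a selection that is independent of the Rademacher signs, to which the fixed-$\vec y$ contraction applies. An alternative route is a discretization of $\vec y \in \Omega^{k-1}$ to an appropriate net combined with a chaining estimate, using the uniform boundedness $|\phi_j|\leq M$ and orthonormality to obtain sub-Gaussian tail estimates for the one-variable supremum and to pay only a dimension-independent factor in the union bound.

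Once the interchange is accomplished, one obtains the comparison displayed above and then
\[
\tilde\gamma \sum_j |a_j| \;\leq\; \mathbb{E}_\omega \sup_{\vec x}|\sum_j r_j a_j \Phi_j(\vec x)| \;\leq\; C_k M^{k-1} \mathbb{E}_\omega \sup_x|\sum_j r_j a_j \phi_j(x)|,
\]
which is the Rademacher-Sidon inequality for $\{\phi_j\}$ with constant proportional to $\tilde\gamma/(C_k M^{k-1})$. The decoupling/symmetrization in the middle step is the main technical hurdle; all other pieces are soft applications of Kahane's contraction principle and the standard identification of the $L^\infty$ norm on $\Omega^k$ with iterated suprema.
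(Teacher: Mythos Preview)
Your target inequality
\[
\mathbb{E}_\omega \sup_{\vec x \in \Omega^k}\Bigl|\sum_j r_j(\omega)a_j\Phi_j(\vec x)\Bigr|
\;\lesssim_{k,M}\;
\mathbb{E}_\omega \sup_{x \in \Omega}\Bigl|\sum_j r_j(\omega)a_j\phi_j(x)\Bigr|
\]
is exactly the comparison that drives the paper's proof as well, so the overall strategy is right. The problem is the step you flag yourself: passing from the fixed-$\vec y$ contraction bound to the bound with $\sup_{\vec y}$ inside the expectation. Neither of your proposed fixes actually closes this gap. Inserting an independent Rademacher sequence $r'_j$ and using that $r_jr'_j\sim r_j$ does not decouple the $\omega$-dependent maximizer $\vec y^*(\omega)$ from the signs; after the substitution you are in exactly the same situation as before. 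And the chaining route requires sub-Gaussian increments of the process in the $\vec y$ variable, which would amount to a $\psi_2$-type bound on $\{\phi_j\}$; the hypotheses here are only uniform boundedness and orthonormality, so that input is simply not available.

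The paper handles this by first passing from Rademacher to Gaussian variables (standard comparability of Rademacher and Gaussian averages in any normed space), and then invoking a complex form of Slepian's comparison lemma. Slepian is precisely the tool that lets one compare $\mathbb{E}\sup_t$ for two Gaussian processes once the induced metrics are compared; here the elementary inequality
\[
\Bigl|\prod_{s=1}^k \phi_j(x_s)-\prod_{s=1}^k \phi_j(x'_s)\Bigr|
\;\le\; M^{k-1}\sum_{s=1}^k |\phi_j(x_s)-\phi_j(x'_s)|
\]
shows that the tensor metric on $\Omega^k$ is dominated by the one-variable metric, and the sup comparison follows in one stroke. So the missing idea is: do not try to swap $\sup_{\vec y}$ and $\mathbb{E}_\omega$ directly in the Rademacher world; go through Gaussians and let Slepian do the work.
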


It follows from Theorem \ref{thm:tensor} and Theorem \ref{thm:tensorImplies} that a  $\psi_2$ OS is Rademacher-Sidon. We will give several proofs of this fact. In fact, orthogonality beyond the $\psi_2$ condition, is not required.
\begin{theorem}\label{thm:RS}Let $\phi_1,\phi_2,\ldots$ denote a set of functions on a probability space $(\Omega,\mu)$ such that $||\phi_j||_{L^2}=1$ and satisfying the $\psi_2(C)$ condition. Then
\begin{equation}\label{eq:RS}
\int \sup_{x \in \Omega}| \sum_{j \in \mathbb{N}} r_j(\omega) a_j \phi_j(x)|d\omega \geq \tilde{\gamma} \sum_{j\in \mathbb{N}} |a_j|.
\end{equation}
with $\tilde{\gamma}:=\tilde{\gamma}(C)$.
\end{theorem}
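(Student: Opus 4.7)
Plan. By homogeneity one may assume $\sum_j |a_j|=1$, and by Kahane's contraction principle $a_j>0$, so the target is $\mathbb{E}_\omega \sup_x |\sum_j r_j(\omega)\, a_j\, \phi_j(x)|\gtrsim 1$. By Hahn–Banach duality, this is equivalent to producing, for almost every Rademacher sequence $\omega$, a signed Borel measure $\nu_\omega$ on $\Omega$ with $\|\nu_\omega\|_{TV}\leq K$ uniformly in $\omega$, satisfying
$$\mathbb{E}_\omega\!\left[r_j(\omega) \int \phi_j\, d\nu_\omega\right]\geq \tilde\gamma \qquad\text{for every }j.$$
Such a family serves as a Rademacher-Sidon decoder, and the duality step does not use orthogonality, so it applies in the generality stated in the theorem.

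\textbf{Construction.} A natural candidate for $\nu_\omega$ is a Rademacher Riesz-type product $d\nu_\omega = h_\omega\, d\mu$ with $h_\omega(x) = \exp\bigl(\lambda \sum_j r_j(\omega)\phi_j(x)\bigr)$. The $\psi_2(C)$ hypothesis controls $\int h_\omega\,d\mu \lesssim \exp\bigl(O(C^2 \lambda^2 n)\bigr)$ via sub-Gaussianity of $\sum r_j \phi_j$, and the Rademacher averaging identity $\mathbb{E}_{r_j}[r_j h_\omega] = \sinh(\lambda\phi_j)\prod_{i\neq j}\cosh(\lambda\phi_i)$, combined with $\|\phi_j\|_2=1$, delivers a $j$-th coordinate of leading order $\lambda\int\phi_j^2\,d\mu = \lambda$. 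No orthogonality is used here beyond the $L^2$-normalization.

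\textbf{Main obstacle.} The two estimates above balance only when $\lambda\lesssim 1/\sqrt n$, so the bare Riesz product yields only the pointwise Khintchine lower bound $\gtrsim \|a\|_2$, which falls short of $\|a\|_1$ by as much as $\sqrt n$ for flat $(a_j)$. Bridging this gap is the heart of the argument, and a naive dyadic-magnitude decomposition of $(a_j)$ with independent per-level Riesz products incurs the same $\sqrt n$ loss. One must exploit the $\psi_2$ hypothesis more deeply, for example by iterating or rescaling the random measure so that the coordinate averages amplify from the $\|a\|_2$ regime to the $\|a\|_1$ regime while $\|\nu_\omega\|_{TV}$ stays bounded, or by first orthogonalizing $\{\phi_j\}$ through a $\psi_2$-compatible Gram–Schmidt perturbation and then invoking Theorem~\ref{thm:tensor} together with Theorem~\ref{thm:tensorImplies} to handle the resulting orthogonal system. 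The essential point, dual to the counterexample of Theorem~\ref{thm:CE}, is that the Rademacher signs must be shown to provide exactly the $\sqrt n$-factor amplification that deterministic Sidonicity lacks; formalizing this amplification while respecting the $\psi_2$ budget is where the real work lies.
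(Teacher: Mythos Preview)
Your proposal is not a proof: you correctly diagnose that the exponential Riesz product $h_\omega=\exp\bigl(\lambda\sum_j r_j(\omega)\phi_j\bigr)$ only yields the Khintchine-type bound $\gtrsim\|a\|_2$, and then you leave the decisive $\sqrt{n}$-amplification step open, writing that ``formalizing this amplification\ldots is where the real work lies.'' Neither of your two suggested repairs is carried out. The iteration/rescaling idea is not made precise, and the Gram--Schmidt route followed by Theorems~\ref{thm:tensor} and~\ref{thm:tensorImplies} would at best recover the result under the extra hypotheses of uniform boundedness and orthogonality (which Theorem~\ref{thm:tensor} requires), whereas the present statement assumes only $\|\phi_j\|_2=1$ and the $\psi_2(C)$ condition; it is also unclear that a Gram--Schmidt perturbation preserves the $\psi_2$ constant.

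The paper's argument avoids the Riesz-product dead end entirely. The key identity is
\[
\sum_{j}|\lambda_j|=\int \sum_j |\lambda_j|\,|\phi_j(x)|^2\,dx \;\le\; \int \Bigl\|\sum_j |\lambda_j|\,\overline{\phi_j(x)}\,\phi_j(\cdot)\Bigr\|_{L^\infty}\,dx,
\]
which uses only $\|\phi_j\|_2=1$. One now views $x_j:=\phi_j$ as vectors in the normed space $L^\infty(\Omega)$ and the outer variable $x$ as the randomness: the $\psi_2(C)$ hypothesis on $\{\overline{\phi_j}\}$ allows one to replace the weights $\overline{\phi_j(x)}$ by Rademacher functions via Proposition~\ref{prop:main}, yielding directly
\[
\int\Bigl\|\sum_j \lambda_j\, r_j(\omega)\,\phi_j\Bigr\|_{L^\infty}\,d\omega \;\gtrsim\; \sum_j|\lambda_j|.
\]
The substantive content is therefore Proposition~\ref{prop:main}, whose proof proceeds by approximating a $\psi_2$ system by a martingale difference sequence (Lemma~\ref{lem:approx}) and then running a genuine Riesz-product argument against that martingale approximation. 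In other words, the Riesz product is used, but on the martingale surrogate rather than on $\{\phi_j\}$ directly, and the $\ell^1$ gain comes from the $L^2$--$L^\infty$ trick above, not from the product measure itself.
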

This will be a corollary of the following result.
\begin{proposition}\label{prop:main}Let $\phi_1,\phi_2,\ldots,\phi_n$ be a system of functions satisfying the $\psi_2(C)$ condition and $||\phi_j||_{L^2}=1$. Let $x_1,x_2,\ldots,x_n$ denote (real or complex) vectors in a normed vector space satisfying $||x_j|| \leq 1$ and $\lambda_1,\lambda_2,\ldots,\lambda_n$ scalars. Then the estimate
$$\int \left|\left| \sum_{j=1}^{n} |\lambda_j| \phi_j(\omega) x_j \right| \right| d \omega \geq \beta \sum_{j=1}^n |\lambda_j|$$
implies
$$\int \left|\left| \sum_{j=1}^{n} \lambda_j r_j(\omega) x_j \right| \right| d \omega \geq \gamma \sum_{j=1}^n |\lambda_j|$$
for $\gamma:=\gamma(\beta, C)$.
\end{proposition}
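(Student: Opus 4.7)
The plan is to reduce the statement to a ``change of process'' comparison
\[
(\star)\quad \int \Bigl\|\sum_j \lambda_j \phi_j(\omega) x_j\Bigr\|\, d\omega \;\leq\; K(C)\, \int \Bigl\|\sum_j \lambda_j r_j(\omega) x_j\Bigr\|\, d\omega,
\]
valid for $\lambda_j \ge 0$ and $\|x_j\|\le 1$; combined with the hypothesis, this directly yields the conclusion with $\gamma = \beta/K(C)$.

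\textbf{Step 1 (Normalization).} Writing $\lambda_j = |\lambda_j| e^{i\theta_j}$, the substitutions $\tilde x_j := e^{i\theta_j} x_j$ and $\tilde \phi_j := e^{-i\theta_j}\phi_j$ preserve $\|\tilde x_j\|\le 1$, $\|\tilde \phi_j\|_{L^2}=1$, and the $\psi_2(C)$ condition, while recasting both the hypothesis and the conclusion in terms of the non-negative coefficients $|\lambda_j|$. Thus I may assume $\lambda_j \ge 0$ henceforth, and the hypothesis and conclusion become formally identical expressions evaluated for the two processes.

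\textbf{Step 2 (Proof of $(\star)$ via sub-Gaussian chaining).} By duality, $\|\sum_j \lambda_j \phi_j(\omega) x_j\| = \sup_{\xi\in B_{X^*}} Y_\xi(\omega)$ with $Y_\xi := \sum_j \lambda_j \langle x_j,\xi\rangle \phi_j$. The $\psi_2(C)$ hypothesis, applied to the sequence $\bigl(\lambda_j\langle x_j,\xi-\xi'\rangle\bigr)_j$, makes $(Y_\xi)_\xi$ a sub-Gaussian process with respect to the metric $d(\xi,\xi') := \bigl(\sum_j \lambda_j^2|\langle x_j,\xi - \xi'\rangle|^2\bigr)^{1/2}$ with constant $C$. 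The Rademacher process $Z_\xi := \sum_j \lambda_j r_j \langle x_j, \xi\rangle$ is sub-Gaussian with respect to the same metric with constant $1$. Talagrand's generic chaining upper bound yields $\mathbb{E}\sup Y_\xi \lesssim C\, \gamma_2(B_{X^*}, d)$, while the matching chaining lower bound for Bernoulli processes (Bednorz-Latala, or a softer ad hoc argument adapted to our setting) yields $\mathbb{E}\sup Z_\xi \gtrsim \gamma_2(B_{X^*}, d)$. Combining gives $(\star)$ with $K$ depending only on $C$.

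\textbf{Main obstacle.} The Bernoulli chaining lower bound is the delicate input: its sharp Bednorz-Latala description involves an additional $\ell^1$-type contribution that could in principle dominate $\gamma_2$, in which case the naive comparison fails. To rule this out in our regime one must invoke the hypothesis essentially, namely that the $\phi$-process is a definite $\beta$-fraction of the trivial bound $\sum\lambda_j$, to force $\gamma_2$ to be the leading term. An alternative, more elementary route that avoids the heavy chaining machinery is a truncation argument: split $\phi_j = \phi_j^\flat + \phi_j^\sharp$ at a threshold $T = T(\beta,C)$ with $|\phi_j^\flat|\le T$; apply the contraction principle to the bounded part (after introducing auxiliary Rademachers $\epsilon_j$ to symmetrize), obtaining a bound of $T$ times the Rademacher average of the $x_j$'s; and use the $\psi_2$ tail estimate $\|\phi_j^\sharp\|_{L^1}\le Ce^{-cT^2/C^2}$ to make the tail contribution smaller than $\tfrac{\beta}{2}\sum\lambda_j$. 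The crux of this alternative is carrying out the symmetrization step for $\phi_j^\flat$ without joint independence of the $\phi_j$'s across $j$, which is precisely where the $\psi_2$ condition — and the simultaneous sub-Gaussian control it gives on all linear combinations — is needed in an essential way.
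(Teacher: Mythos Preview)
Your reduction to the comparison $(\star)$ with a constant $K=K(C)$ depending only on $C$ is the heart of the plan, but $(\star)$ is false in that form. Take $\phi_j$ to be independent standard real Gaussians (a $\psi_2(C)$ system with universal $C$ and $\|\phi_j\|_2=1$), $\lambda_j=1$, and $x_j=e_j$ in $\ell^\infty_n$: the left side of $(\star)$ is $\mathbb{E}\max_j|g_j|\asymp\sqrt{\log n}$ while the right side is $1$. Correspondingly, the asserted lower bound $\mathbb{E}\sup_\xi Z_\xi\gtrsim\gamma_2(B_{X^*},d)$ is false in general; Bednorz--Lata\l a characterizes the Bernoulli supremum $B(\mathcal{E})$ as an infimum over decompositions $\mathcal{E}\subset\mathcal{E}_1+\mathcal{E}_2$ of $\sup_{\mathcal{E}_1}\|t\|_1+\gamma_2(\mathcal{E}_2)$, and this can be much smaller than $\gamma_2(\mathcal{E})$.

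Neither of your two fixes closes the gap. Invoking the hypothesis gives only $\gamma_2(\mathcal{E})\gtrsim(\beta/C)\sum\lambda_j$ via the chaining upper bound, which says nothing about $B(\mathcal{E})$ because the Bednorz--Lata\l a infimum may still be small. The truncation--symmetrization--contraction alternative fails exactly where you flag it: there is no symmetrization inequality for dependent variables, and the $\psi_2$ condition on linear combinations does not manufacture one. What does work---and this is the paper's Section~8 argument---is to \emph{combine} the two ingredients rather than pose them as alternatives: first truncate each $\phi_j$ at level $T\asymp C\sqrt{\log(1/\beta)}$ so that the tail contributes at most $\tfrac{\beta}{2}\sum|\lambda_j|$ (your $\phi^\sharp$ estimate is correct here); then run the Bednorz--Lata\l a decomposition on the \emph{truncated} system, where the uniform bound $|\phi_j^\flat|\le T$ controls the $\ell^1$-bounded piece trivially by $T\cdot\sup_{\mathcal{E}_1}\|t\|_1\lesssim T\cdot B(\mathcal{E})$, and Preston's bound together with $G(\mathcal{E}_2)\lesssim B(\mathcal{E})$ handles the Gaussian piece. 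This yields $(\star)$ with $K\asymp CT$, hence $\gamma\asymp\beta/(CT)$. The paper also gives a completely different, self-contained proof (its primary one) via a martingale-difference approximation of a proportional subsystem of the $\phi_j$ followed by a Riesz-product argument; that route avoids chaining and Bednorz--Lata\l a entirely and bears no resemblance to either of your sketches.
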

Let us explain how Proposition \ref{prop:main} implies Theorem \ref{thm:RS}. By truncation it suffices to prove \eqref{eq:RS} for a finite system, as long as the bounds do not depend on the size of the system. We then have that
\begin{equation}\label{eq:L2LinftyTrick}
\sum_{j=1}^n |\lambda_j| = \int \sum_{j=1}^n |\lambda_j| |\phi_{j}(x)|^2 d x \leq \int  \left| \left| \sum_{j=1}^n |\lambda_j| \overline{\phi_{j}(x)} \phi_{j}(y)  \right| \right|_{L^{\infty}_{y}} d x.
\end{equation}
Using the $\psi_2(C)$ hypothesis, we may apply Proposition \ref{prop:main} to replace the functions $\{\overline{\phi_{j}(x)}\}$ with Rademacher functions and remove the absolute values. This gives us
$$\gamma \sum_{j=1}^n |\lambda_j| \leq \int  \left| \left| \sum_{j=1}^n \lambda_j r_j(\omega ) \phi_{j}(y)  \right| \right|_{L^{\infty}_{y}} d \omega  $$
which is Theorem \ref{thm:RS}.
Another variant of Kaczmarz's problem would be to ask if an appropriate hypothesis, such as the $\psi_2(C)$ condition, implies that a system contains a large Sidon subsystem. In this direction it follows that a finite uniformly bounded OS satisfying the $\psi_2(C)$ condition must contain a Sidon subsystem of proportional size. More precisely:
\begin{theorem}\label{thm:proportional}Let $\phi_1,\phi_2,\ldots,\phi_n$ be a system of functions satisfying $||\phi_j||_{L^2}=1$, $||\phi_j||_{L^\infty}\leq M$ and the $\psi_2(C)$ condition.
Then there exists a subset $S \subseteq [n]$ of proportional size $|S| \geq \alpha(C,M) n$ such that
$$\sup_{x \in \Omega}| \sum_{j\in S} a_j \phi_j(x)| \geq \gamma \sum_{j\in S} |a_j|.$$
where $\gamma = \gamma(C,M)$.
\end{theorem}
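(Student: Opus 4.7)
My plan is to deduce Theorem \ref{thm:proportional} from Theorem \ref{thm:RS} by an Elton-style $\ell^1$-extraction. Since $\mu$ is a probability measure, $\|\phi_j\|_{L^\infty} \geq \|\phi_j\|_{L^2} = 1$, so $\|\phi_j\|_{L^\infty} \in [1, M]$. Normalize $\tilde\phi_j := \phi_j/\|\phi_j\|_{L^\infty}$ to obtain unit vectors in $L^\infty(\Omega)$.

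By Theorem \ref{thm:RS}, the system $(\phi_j)$ is Rademacher-Sidon with constant $\tilde\gamma = \tilde\gamma(C) > 0$. Substituting $b_j := a_j/\|\phi_j\|_{L^\infty}$ into the Rademacher-Sidon inequality for $(\phi_j)$ yields the uniform Rademacher-Sidon estimate for the normalized system:
$$\int \sup_{x \in \Omega} \Bigl|\sum_j r_j(\omega)\, a_j\, \tilde\phi_j(x)\Bigr|\,d\omega \;\geq\; \frac{\tilde\gamma}{M} \sum_j |a_j|$$
for all scalars $(a_j)$. In particular, setting $a_j = 1$ gives $\int \|\sum_j r_j(\omega)\tilde\phi_j\|_{L^\infty}\,d\omega \geq (\tilde\gamma/M)\,n$.

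Next, applying an Elton-type $\ell^1$-selection theorem to the unit sequence $(\tilde\phi_j)$ produces a subset $S \subseteq [n]$ with $|S| \geq \eta(\tilde\gamma/M)\, n$ and a constant $c = c(\tilde\gamma/M) > 0$ such that
$$\Bigl\|\sum_{j \in S} c_j\, \tilde\phi_j\Bigr\|_{L^\infty} \;\geq\; c \sum_{j \in S} |c_j|$$
for all scalars $(c_j)$. Setting $c_j = a_j \|\phi_j\|_{L^\infty}$ and using $\|\phi_j\|_{L^\infty} \geq 1$, this translates to the Sidon inequality
$$\Bigl\|\sum_{j \in S} a_j\, \phi_j\Bigr\|_{L^\infty} \;\geq\; c \sum_{j \in S} |a_j|$$
with $\gamma(C, M) := c(\tilde\gamma(C)/M)$ and $\alpha(C, M) := \eta(\tilde\gamma(C)/M)$, both depending only on $C$ and $M$.

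The main obstacle is the extraction step itself: one needs genuine $\ell^1$-equivalence on the selected subset, whereas Elton's original 1983 theorem produces only the weaker sign-embedding of $\ell^1_{|S|}$. This upgrade can be obtained either by invoking a refined extraction of Pajor or Talagrand, or by exploiting the uniform Rademacher-Sidon hypothesis of the preceding step---which is stronger than Elton's single-coefficient input---together with a symmetrization argument to absorb arbitrary coefficients. The uniform boundedness hypothesis $\|\phi_j\|_{L^\infty} \leq M$, absent from Theorem \ref{thm:RS}, enters the argument precisely in the normalization to unit vectors.
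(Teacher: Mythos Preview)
Your proposal is correct and follows essentially the same route as the paper: apply Theorem~\ref{thm:RS} to obtain the Rademacher--Sidon estimate, normalize the $\phi_j$ to unit vectors in $L^\infty$, and then invoke the Elton--Pajor theorem (stated in the paper as Theorem~\ref{thm:EP}, with full $\ell^1$-equivalence and complex coefficients, citing Pajor's extension) to extract a proportional Sidon subset. Your ``main obstacle'' is precisely what the Pajor refinement handles, so there is no gap---the paper simply treats Elton--Pajor as a black box rather than discussing the sign-embedding issue.
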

This is an immediate consequence of Theorem \ref{thm:RS} and the Elton-Pajor theorem.
\begin{theorem}\label{thm:EP}\emph{(Elton-Pajor)} Let $x_1,x_2,\ldots,x_n$ denote elements in a real or complex Banach space, such that $||x_i||\leq 1$. Furthermore, for Rademacher functions $r_1,r_2,\ldots,r_n$ assume that $ \gamma n \leq \int ||\sum_{i=1}^n r_i(\omega) x_i ||d \omega.$ Then there exists real constants $c:= c(\gamma)>0$ and $\beta := \beta(\gamma) >0$ and a subset $S \subseteq [n]$ with $|S| \geq c n$ such that
$$ \beta \sum_{j \in S}|a_j| \leq || \sum_{j \in S} a_j x_j || $$
for all complex coefficients $\{a_i\}_{i \in S}$.
\end{theorem}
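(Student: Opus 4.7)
The plan is to convert the hypothesis on $L^1$-norms into a statement about linear functionals via Hahn--Banach, and then perform a combinatorial extraction to find the $\ell^1$-equivalent subset. For each sign vector $\epsilon \in \{-1,+1\}^n$, choose a norm-one functional $\phi_\epsilon$ with $\phi_\epsilon\bigl(\sum_i \epsilon_i x_i\bigr) = \bigl\|\sum_i \epsilon_i x_i\bigr\|$ (taking real parts in the complex case). The hypothesis rewrites as
$$\mathbb{E}_\epsilon \sum_{i=1}^n \epsilon_i \phi_\epsilon(x_i) \geq \gamma n,$$
and since each summand is bounded by $\|x_i\|\leq 1$, a Markov-type averaging shows that, for a set of sign vectors of measure $\gtrsim \gamma$, the index set
$$T_\epsilon := \{\, i : \epsilon_i\, \phi_\epsilon(x_i) > \gamma/2 \,\}$$
has cardinality at least $(\gamma/2) n$.

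The heart of the argument is to locate a single subset $S \subset [n]$, of size $|S| \gtrsim_\gamma n$, that is \emph{sign-shattered} by the family: for every sign pattern $\eta \in \{-1,+1\}^S$, there is some $\epsilon$ with $\epsilon|_S = \eta$ and $S \subseteq T_\epsilon$. Given such an $S$, for any scalars $(a_i)_{i\in S}$ we set $\eta_i := \mathrm{sign}(a_i)$, pick the corresponding $\epsilon$, and estimate
$$\left\|\sum_{i\in S} a_i x_i\right\| \geq \phi_\epsilon\!\left(\sum_{i\in S} a_i x_i\right) = \sum_{i\in S} |a_i|\cdot \epsilon_i \phi_\epsilon(x_i) \geq \frac{\gamma}{2} \sum_{i\in S} |a_i|,$$
which is the desired $\ell^1$-lower bound with $\beta \asymp \gamma$.

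The main obstacle is producing this sign-shattered subset of \emph{proportional} size. A bare application of Sauer--Shelah to the system $\{T_\epsilon\}$ only yields a shattered set of logarithmic size, which is inadequate. The improvement to linear size (Elton's theorem, refined by Pajor) rests on exploiting not just the sets $T_\epsilon$ but also the attached sign information $\epsilon|_{T_\epsilon}$, combined with an iterative probabilistic selection: one samples coordinates at an appropriate density, and at each stage applies a pigeonhole/conditioning argument to the random variables $\phi_\epsilon(x_i)$ to simultaneously preserve a constant fraction of the mass $\sum_i \epsilon_i\phi_\epsilon(x_i)$ while doubling the shattered structure. This is the delicate part of the proof; routine book-keeping then shows $c(\gamma),\beta(\gamma)$ are polynomial in $\gamma$, which is what is needed for Theorem~\ref{thm:proportional}.
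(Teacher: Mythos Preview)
The paper does not prove Theorem~\ref{thm:EP}; it is quoted as a known result and attributed to Elton~\cite{Elton} (real case) and Pajor~\cite{Pajor} (complex case). So there is no ``paper's own proof'' to compare with, and your proposal should be read as an attempt to reconstruct the original argument.

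Your outline is in the spirit of Elton's approach: norming functionals via Hahn--Banach, then a combinatorial extraction. The reduction step is fine, and the final deduction from a sign-shattered $S$ is correct for real scalars. However, the heart of the matter---producing a \emph{proportional}-size $S$ that is sign-shattered in your sense---is precisely where your write-up goes soft. You correctly observe that naive Sauer--Shelah gives only logarithmic size, and you gesture at an ``iterative probabilistic selection'' with ``pigeonhole/conditioning,'' but no concrete mechanism is given. This is exactly Elton's contribution, and it is not routine; as written, this is a description of what the proof must accomplish rather than a proof. If you want a self-contained argument you should consult \cite{Elton} directly.

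Two further issues. First, the complex case is not handled: writing $\eta_i=\mathrm{sign}(a_i)$ for complex $a_i$ gives a unimodular number, not an element of $\{-1,+1\}$, so your shattering set-up does not apply; Pajor's extension requires an additional discretization of the phases, which you have not supplied. Second, the claim that polynomial dependence of $c(\gamma),\beta(\gamma)$ on $\gamma$ is ``what is needed for Theorem~\ref{thm:proportional}'' is an overstatement: Theorem~\ref{thm:proportional} only requires that these constants be positive.
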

One interesting consequence of Proposition \ref{prop:main} is that one may replace the Rademacher functions in the hypothesis of the Elton-Pajor Theorem with any complex-valued functions satisfying the $\psi_2(C)$ condition.

Our approach to Theorem \ref{thm:tensor} and Proposition \ref{prop:main} is rather elementary. The proofs proceed by showing that one may efficiently approximate a bounded system satisfying the $\psi_2(C)$ condition by a martingale difference sequence. Once one is able to reduce to a martingale difference sequence, one may apply Riesz product-type arguments. 

In Section  \ref{sec:5foldComplex}, we give an alternate approach to Proposition \ref{prop:main} based on more sophisticated tools from the theory of stochastic processes such as Preston's theorem \cite{Preston}, Talagrand's majorizing measure theorem \cite{TalagrandMM} and Bednorz and Lata\l{}a's \cite{BL} recent characterization of bounded Bernoulli processes. This approach yields a superior bound for the size of $\gamma(\beta, C)$ and allows for the following extension to more general norms.
\begin{theorem}\label{thm:RadMajor}Let $\phi_1,\phi_2,\ldots,\phi_n$ be a $\psi_2(C)$ system, uniformly bounded by $M$ and let $x_1,x_2,\ldots,x_n$ be vectors in a normed space $X$. Then
\begin{equation}\label{eq:radMaj}
\int || \sum_{j=1}^{n} \phi_j(\omega) x_j || d\omega \lesssim C M \int || \sum_{j=1}^{n} r_j(\omega) x_j || d\omega.
\end{equation}
In particular, one may take $\gamma(C,\beta) \gtrsim \beta\left( C \min\left(M, \sqrt{\log \frac{1}{\beta}  } \right)\right)^{-1}$ in Proposition \ref{prop:main} for $\psi_2(C)$ systems uniformly bounded by $M$.
\end{theorem}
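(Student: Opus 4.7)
The plan is to dualize the norm into a supremum of a stochastic process indexed by the unit ball of $X^*$, and then combine the upper bound given by Talagrand's majorizing measure theorem for sub-Gaussian processes with the Bednorz-Lata\l{}a resolution of Talagrand's Bernoulli conjecture. Writing $||y||_X = \sup_{f \in B_{X^*}} |f(y)|$ and setting $T := \{(f(x_1),\ldots,f(x_n)) : f \in B_{X^*}\}$, the two sides of \eqref{eq:radMaj} become
$$\int ||\sum_j \phi_j(\omega) x_j||\, d\omega = \mathbb{E} \sup_{t \in T} |G_t|, \qquad \int ||\sum_j r_j(\omega) x_j||\, d\omega = \mathbb{E} \sup_{t \in T} |R_t|,$$
where $G_t := \sum_j t_j \phi_j$ and $R_t := \sum_j t_j r_j$. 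The $\psi_2(C)$ hypothesis gives $||G_t - G_s||_{\psi_2} \leq C ||t-s||_2$, so $G$ is sub-Gaussian with respect to $C d_2$, while $|\phi_j| \leq M$ gives the deterministic pointwise estimate $|G_t| \leq M\, ||t||_1$.

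By Preston's theorem (the upper half of Talagrand's majorizing measure theorem), the sub-Gaussian increments imply $\mathbb{E}\sup_{t \in T'}|G_t| \lesssim C \gamma_2(T',d_2)$ for any $T' \ni 0$. For the Rademacher side, Bednorz-Lata\l{}a furnishes a decomposition $T \subset T_1 + T_2$ with
$$\gamma_2(T_1, d_2) + \sup_{t \in T_2} ||t||_1 \lesssim \mathbb{E}\sup_{t \in T}|R_t|.$$
Any $t \in T$ then splits as $t = t_1 + t_2$ with $|G_t| \leq |G_{t_1}| + M\,||t_2||_1$, and combining the two bounds gives
$$\mathbb{E}\sup_{t \in T}|G_t| \leq \mathbb{E}\sup_{t_1 \in T_1}|G_{t_1}| + M \sup_{T_2}||t||_1 \lesssim C \gamma_2(T_1, d_2) + M \sup_{T_2}||t||_1 \lesssim CM \cdot \mathbb{E}\sup_{t \in T}|R_t|,$$
which is \eqref{eq:radMaj}.

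For Proposition \ref{prop:main}, applying \eqref{eq:radMaj} to the vectors $|\lambda_j| x_j$ (and, in the complex case, adjusting phases or passing through Steinhaus variables to compare $|\lambda_j| r_j$ with $\lambda_j r_j$) yields the $M$-bound $\gamma \gtrsim \beta/(CM)$. The $\sqrt{\log(1/\beta)}$ bound comes from truncating each $\phi_j$ at level $K \asymp C\sqrt{\log(1/\beta)}$: since each individual $\phi_j$ satisfies $||\phi_j||_{\psi_2} \leq C$, the truncation $L^1$-error is $\lesssim e^{-cK^2/C^2}$ per coordinate, which is at most $(\beta/2)\sum|\lambda_j|$ for $K$ suitably chosen. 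The hypothesis thus passes to the truncated system with $\beta \mapsto \beta/2$, and \eqref{eq:radMaj} with $M \mapsto K$ delivers the claim.

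The main obstacle is the careful invocation of the Bednorz-Lata\l{}a theorem: one needs the sub-Gaussian metric $d_2$ appearing in the MM upper bound for $G$ to coincide exactly with the metric in their Bernoulli decomposition (it does, because the $\psi_2(C)$ condition precisely encodes the $\ell^2$ metric on coefficients). The remaining ingredients — Hahn-Banach duality, generic chaining, the individual-coordinate $\psi_2$ tail bound for truncation, and the phase/Steinhaus comparison — are standard once the framework is in place.
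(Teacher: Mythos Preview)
Your proposal is correct and follows essentially the same route as the paper: dualize to a set $\mathcal{E}\subset\mathbb{C}^n$, invoke the Bednorz--Lata\l{}a decomposition $\mathcal{E}\subset\mathcal{E}_1+\mathcal{E}_2$, control the $\ell^1$-bounded piece via the uniform bound $|\phi_j|\le M$, and control the other piece by the sub-Gaussian/majorizing-measure upper bound; the $\sqrt{\log(1/\beta)}$ improvement likewise comes from the same truncation argument the paper gives in Section~\ref{sec:unbounded}. The only cosmetic difference is that the paper routes the chaining step through an explicit comparison with complex Gaussians (Corollary~\ref{cor:gMajor}) rather than writing $\gamma_2$ directly, which is equivalent by the majorizing measure theorem.
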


Several problems related to this work are given in Section \ref{sec:problems}. The authors would like to thank Boris Kashin, Dan Mauldin, and Herv\'e Queff\'elec, for comments on an earlier draft of this manuscript.

\section{Proposition \ref{prop:main} without coefficients}\label{sec:Mart}
In order to present the proof as transparently as possible, we start by establishing Proposition \ref{prop:main} in the case that $\lambda_j=1$ for all $1\leq j \leq n$. We will then show how to adapt the proof to the case of general $\lambda_j$ in the next section. We start with the following lemma.

\begin{lemma}\label{lem:approx}Let $\phi_1,\phi_2,\ldots,\phi_n$ be real-valued functions on a probability space $(\Omega,\mu)$ such that
\begin{equation}\label{eq:approx1}
||\phi_j||_{L^2}=1 \hspace{.3cm}\text{and}\hspace{.3cm}||\phi_j||_{L^{\infty}} \leq C
\end{equation}
\begin{equation}\label{eq:approx2}
\left|\left| \sum_{j=1}^{n} a_j \phi_j \right|\right|_{\psi_2} \leq C \left( \sum_{j=1}^{n} |a_j|^2 \right)^{1/2}
\end{equation}
for all coefficients $\{a_j\}$. For $\epsilon >0$, there exists a subset $S \subseteq [n]$ such that $|S| \geq \delta(\epsilon, C) n $ and a martingale difference sequence $\{\theta_j\}_{j \in S}$ satisfying $||\theta_j||_{L^{\infty}} \leq C $ such that:
\begin{equation}\label{eq:approx3}
||\phi_j - \theta_j ||_{L^1} \leq \epsilon,
\end{equation}
and such that there exists an ordering of $S$, say $j_1,j_2,\ldots,j_n$, with
\begin{equation}\label{eq:approx4}
\mathbb{E} \left[\theta_{j_s} | \theta_{j_{s'}}, s' < s \right]=0.
\end{equation}
Moreover, one may take $\delta(\epsilon, M) \gtrsim  C^{-2} \epsilon^2 \left( \log \frac{C}{\epsilon} \right)^{-1}$.
\end{lemma}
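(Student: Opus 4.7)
The plan is to construct $S$ greedily, one index at a time, defining each retained $\theta_j$ as (a discretization of) $\phi_j$ minus its conditional expectation with respect to the $\sigma$-algebra generated by the previously-chosen $\theta$'s. The martingale-difference property is then automatic from this definition, and we only accept $\phi_j$ into $S$ when this conditional expectation is small in $L^1$, which forces $\|\phi_j-\theta_j\|_{L^1}\leq\epsilon$.

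To keep the filtration finite-dimensional, first discretize each $\phi_j$ to a function $\phi_j^\ast$ taking at most $N\asymp C/\epsilon$ distinct values with $\|\phi_j-\phi_j^\ast\|_\infty\leq\epsilon/10$. Process $j=1,2,\ldots,n$ in the natural order, maintaining the $\sigma$-algebra $\mathcal{F}_k:=\sigma(\phi_{j_1}^\ast,\ldots,\phi_{j_k}^\ast)$ generated by the first $k$ selected discretized functions. Include $j$ in $S$ whenever $\|\mathbb{E}[\phi_j\mid\mathcal{F}_k]\|_{L^1}\leq\epsilon/4$, and in that case set $\theta_j:=\phi_j^\ast-\mathbb{E}[\phi_j^\ast\mid\mathcal{F}_k]$. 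An easy induction shows $\mathcal{F}_k=\sigma(\theta_{j_1},\ldots,\theta_{j_k})$, so the martingale-difference condition holds in the ordering $j_1,j_2,\ldots$; the triangle inequality yields $\|\theta_j\|_{L^\infty}\leq 3C$ (absorbed into the constant $C$) and $\|\phi_j-\theta_j\|_{L^1}\leq\epsilon$; and $\mathcal{F}_k$ has at most $N^k$ atoms.

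The heart of the argument is a Bessel-type estimate at each level. Letting $A_1,\ldots,A_M$ be the atoms of $\mathcal{F}_k$ ($M\leq N^k$) with orthonormal basis $e_i:=\mathbf{1}_{A_i}/\sqrt{\mu(A_i)}$ of $L^2(\mathcal{F}_k)$, I would combine the dual form of the $\psi_2(C)$ condition, $\sum_j|\langle f,\phi_j\rangle|^2\lesssim C^2\|f\|_{\psi_2^\ast}^2$, with the standard Orlicz computation $\|\mathbf{1}_A\|_{\psi_2^\ast}\asymp\mu(A)\sqrt{\log(1/\mu(A))}$ and the entropy bound $\sum_i\mu(A_i)\log(1/\mu(A_i))\leq\log M$ to obtain
$$\sum_{j=1}^n\bigl\|\mathbb{E}[\phi_j\mid\mathcal{F}_k]\bigr\|_{L^2}^2=\sum_i\sum_j|\langle\phi_j,e_i\rangle|^2\lesssim C^2\log M\leq C^2 k\log N.$$

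Since $\|\cdot\|_{L^1}\leq\|\cdot\|_{L^2}$ on a probability space, at most $O(C^2 k\log N/\epsilon^2)$ indices can fail the selection criterion at level $k$, so the greedy process continues as long as $k+C^2 k\log N/\epsilon^2\lesssim n$. This yields $|S|\gtrsim n\epsilon^2/(C^2\log(C/\epsilon))$, matching the claimed lower bound on $\delta(\epsilon,C)$. The main obstacle is the Bessel-type estimate above: one needs both the dual formulation of $\psi_2(C)$ and the sharp Orlicz norm of an indicator function to produce the favorable $\log(1/\mu(A))$ loss in the atom count, without which the dependence of $\delta$ on $\epsilon$ would be significantly worse.
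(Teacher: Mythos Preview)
Your plan is correct and follows the same architecture as the paper's proof: build the $\theta_j$ inductively as (discretizations of) $\phi_j-\mathbb{E}[\phi_j\mid\mathcal{F}]$, keep the filtration finite by rounding each $\phi_j$ to $N\asymp C/\epsilon$ values, and control the conditional expectations via the Orlicz duality estimate $\|1_A\|_{\psi_2^*}\asymp\mu(A)\sqrt{\log(1/\mu(A))}$.

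The execution differs in two places, and yours is arguably cleaner. First, the paper works in $L^1$: it bounds $\sum_j|\langle\phi_j,1_{\Omega_\alpha}\rangle|\lesssim C\sqrt{n}\,\mu(\Omega_\alpha)\sqrt{\log(1/\mu(\Omega_\alpha))}$ and then splits the atoms into ``large'' ($\mu>\delta V^{-t}$) and ``small'' to control the sum over $\alpha$. Your $L^2$ Bessel bound $\sum_j|\langle\phi_j,e_i\rangle|^2\le C^2\|e_i\|_{\psi_2^*}^2\asymp C^2\mu(A_i)\log(1/\mu(A_i))$ together with the entropy inequality $\sum_i\mu(A_i)\log(1/\mu(A_i))\le\log M$ handles all atoms at once and avoids that dichotomy. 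Second, the paper picks $j_t$ by pigeonhole (find an index below the average), whereas you scan $j=1,\ldots,n$ and accept whenever the criterion holds. Your final count ``$k+C^2k\log N/\epsilon^2\lesssim n$'' is correct but tacitly uses that any $j$ rejected at level $\ell<k$ also fails the level-$k$ criterion, since $\|\mathbb{E}[\phi_j\mid\mathcal{F}_\ell]\|_{L^1}\le\|\mathbb{E}[\phi_j\mid\mathcal{F}_k]\|_{L^1}$ by the tower property and $L^1$-contractivity of conditional expectation; without this monotonicity the rejection bounds would sum to $O(C^2k^2\log N/\epsilon^2)$ and only give $|S|\gtrsim\sqrt{n}$. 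It is worth making that step explicit. (A cosmetic point: your $\theta_j$ has $\|\theta_j\|_\infty\le 3C$ rather than $C$; the paper has the same issue and it is harmless for the applications.)
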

\begin{proof}The functions $\theta_j$ will be discrete valued, taking at most $V$ values, with
\begin{equation}\label{eq:V}
V \lesssim \frac{C}{\epsilon}.
\end{equation}
More specifically, we will define
\begin{equation}\label{eq:approxForm}
\theta_{j_s} = \sum_{v=1}^{V} \sigma_{(s,v)} 1_{\Omega_{(s,v)}}
\end{equation}
where $\sigma_{(s,v)} \in \mathbb{R}$, $|\sigma_{(s,v)}|\leq C$, and $\{\Omega_{(s,v)} : v=1,\ldots, V \}$ a partition of $\Omega$. Letting $\mathcal{G}_{s}$ denote the set algebra generated by $\{\Omega_{(s',v)}: s'\leq s, v \leq V \}$ we clearly have
\begin{equation}\label{eq:GsBound}
|\mathcal{G}_{s}| \leq V^{s}.
\end{equation}
We will denote the atoms of $\mathcal{G}_{s}$ as $\{\Omega_{\alpha}^{(s)}\}$. Thus
$$\mathbb{E}\left[f |  \mathcal{G}_{s} \right] = \sum_{\alpha} \frac{\int_{\Omega_\alpha}f }{\mu(\Omega_\alpha)} 1_{\Omega_\alpha} $$
and
\begin{equation}\label{eq:L1martin}
\left|\left| \mathbb{E}\left[\phi_j |  \mathcal{G}_{s} \right] \right| \right|_{L^1} = \sum_{\alpha}  |\int_{\Omega_\alpha} \phi_j |.
\end{equation}
We will now construct $\theta_{j_s}$ by induction, with the base case being treated analogously to the induction step. Assume that $\theta_{j_s}$ has been constructed for all $s < t$ and let $\mathcal{J}_{t} \subset [n]$ be the set of $\mathcal{J}_{t} = \{j_s : s < t\}$.  We then have
\begin{equation}\label{eq:JtAve}
\sum_{j \in [n] \setminus \mathcal{J}_{t}} \left|\left| \mathbb{E}\left[\phi_j |  \mathcal{G}_{s} \right] \right| \right|_{L^1} \leq \sum_{\alpha} \sum_{j\in [n] \setminus \mathcal{J}_{t}} \left|\left<\phi_j, 1_{\Omega_\alpha} \right> \right|.
\end{equation}
For a fixed $\alpha$, using the $\psi_2(C)$ condition \eqref{eq:approx2}, we have
$$\sum_{j=1}^{n}\left|\left<\phi_j, 1_{\Omega_\alpha} \right> \right| \leq \max_{\epsilon_1,\ldots,\epsilon_n= \pm 1} \left| \left| \sum_{j=1}^{n} \epsilon_j \phi_j \right| \right|_{\psi_2} || 1_{\Omega_\alpha}||_{\psi_2^{*}}$$
\begin{equation}\label{eq:OrlHold}
\lesssim C\sqrt{n} |\Omega_\alpha | \left( \log \left(1 + \frac{1}{|\Omega_\alpha |} \right) \right)^{1/2}.
\end{equation}
Let $\delta$ be as given in the statement of the lemma and define
$$A := \{\alpha :  |\Omega_\alpha| > \delta V^{-t}  \} $$
$$A' := \{\alpha :  |\Omega_\alpha| \leq \delta V^{-t}  \}.$$
Using \eqref{eq:OrlHold}, the definition of $A$ and the inequality/hypothesis \eqref{eq:V} which states that $V \leq \frac{C}{\epsilon}$ and the inequality/hypothesis $t \leq \delta n$, we have that
$$\sum_{\alpha \in A} \sum_{j\in [n] \setminus \mathcal{J}_{t}} \left|\left<\phi_j, 1_{\Omega_\alpha} \right> \right|  \lesssim \sum_{\alpha \in A} C \sqrt{n}  |\Omega_\alpha| \left( \log \left(  \left(\frac{C}{\epsilon}\right)^{t} \delta^{-1} \right) \right)^{1/2}  $$
\begin{equation}\label{eq:ABound}\lesssim C \sqrt{n} t^{1/2} \left( \sqrt{ \log \left(\frac{C}{\epsilon} \right)}+ \sqrt{ \log \left( \frac{1}{\delta} \right) }\right) \lesssim C n \sqrt{\delta \log \left(\frac{C}{\epsilon} \right)}.
\end{equation}
On the other hand
\begin{equation}\label{eq:AprimeBound}
\sum_{j\in [n] \setminus  \mathcal{J}_{t}} \sum_{\alpha \in A'} \left|\left<\phi_j, 1_{\Omega_\alpha} \right> \right|  \lesssim n |\mathcal{G}_{s}| C \delta V^{-t} \lesssim C\delta n.
\end{equation}
It follows that $\sum_{j \in [n] \setminus  \mathcal{J}_{t}} \left|\left| \mathbb{E}\left[\phi_j |  \mathcal{G}_{s} \right] \right| \right|_{L^1} \leq C n \sqrt{\delta \log \left(\frac{C}{\epsilon} \right)} $, which allows us to find a $j_{t} \in [n] \setminus \mathcal{J}_{t}$ such that
$$\left|\left| \mathbb{E}\left[\phi_{j_t} |  \mathcal{G}_{t} \right] \right| \right|_{L^1} \lesssim C \sqrt{\delta \log \frac{C}{\epsilon}}.$$
We may now define $\theta_{j_t}$ to be an $\epsilon$-approximation (in $L^{\infty}$) to $\phi_{j_t} - \mathbb{E}\left[\phi_{j_t} |  \mathcal{G}_{t} \right] $ of the form \eqref{eq:approxForm}. We then have that
$$\left| \left| \theta_{j_t} -  \phi_{j_t} \right| \right| \lesssim \epsilon + C \sqrt{\delta \log \frac{C}{\epsilon}}$$
provided that $\delta \lesssim C^{-2}\epsilon^2 \left( \log \frac{C}{\epsilon} \right)^{-1}$ this quantity is $\lesssim \epsilon$. This completes the proof.
\end{proof}

Using Lemma \ref{lem:approx} we now are ready to prove Proposition \ref{prop:main}, again with the restrictions that $\lambda_j=1$ and $||\phi_j||\leq C$. Let $x_1,x_2,\ldots,x_n$ denote a sequence of vectors in a real or complex normed space $X$, and let $\phi_1,\phi_2,\ldots,\phi_n$ denote real-valued functions satisfying the hypothesis of Proposition \ref{prop:main}. We will return to the more general complex case shortly. We start by applying Lemma \ref{lem:approx} to obtain a martingale difference approximation ${\theta_j}$ to ${\phi_j }$. It clearly follows that $\prod_{j \in S} \left( 1+ \frac{\epsilon_j}{C} \theta_j \right) \geq 0$. Moreover, from the martingale difference sequence property \eqref{eq:approx3}, we have for all $\epsilon_j \in \{-1,1\}$, that
\begin{equation}\label{eq:ProductProb}
\int \prod_{j \in S} \left( 1+ \frac{\epsilon_j}{C} \theta_j(\omega) \right) d\omega = 1.
 \end{equation}
 Fix $\epsilon > 0$, then

$$ \int || \sum_{j=1}^{n} r_j(\omega) x_j ||d\omega  \geq  \int || \sum_{j=1}^{n} r_j(\omega) x_j ||   \prod_{j \in S} \left( 1+ \frac{r_j(\omega)}{C} \theta_j(\omega_2) \right) d\omega d\omega_2  \geq$$

\begin{equation}\label{eq:PhiXLower}\frac{1}{C} \int || \sum_{j \in S} \theta_j(\omega_2) x_j  ||   d\omega_2 \geq  \frac{1}{C} \left( \int || \sum_{j \in S} \phi_j(\omega) x_j ||d\omega - \epsilon |S|   \right).
\end{equation}

Returning to the case of complex $\phi_j$, let us split each function into real and imaginary parts as $\phi_j = \phi_{j}' + i \phi_j''$. From the assumption that
\begin{equation}\label{eq:Gamman}
\int || \sum_{j=1}^{n} \phi_j (\omega) x_j || d\omega > \beta  n,
\end{equation}
without loss of generality we may assume that
\begin{equation}\label{eq:GammaPrimen}
\int || \sum_{j=1}^{n} \phi_j' (\omega) x_j || d\omega > \frac{1}{2}\beta  n.
\end{equation}
Furthermore, we may find a subset $I \subseteq [n]$ such that $|I| \gtrsim \frac{1}{2}\beta n$ and such that, for each $S \subseteq I$, one has
\begin{equation}\label{eq:GammaI}
\int || \sum_{j \in S} \lambda_j \phi_j' (\omega) x_j || d\omega \gtrsim \frac{1}{2}\beta n.
\end{equation}
Since,
$$\int  || \sum_{j=1}^{n} r_j (\omega) x_j || d\omega  \geq \int  || \sum_{j \in I} r_j (\omega) x_j || d\omega $$
applying \eqref{eq:PhiXLower} with ${1,2,\ldots,n}$ replaced by $I$, we may lower bound this as
$$ \frac{1}{C} \left( \int || \sum_{j \in S} \phi_j'(\omega) x_j ||d\omega - \epsilon |S|   \right) $$
where $|S| \geq \delta(\epsilon, C) n$. Using that $|I| \gtrsim \frac{1}{2}\beta $, we have that
$$\frac{1}{C} \left(\frac{1}{2}\gamma - \epsilon \right)|S| \gtrsim  C^{-3} \beta ^{3} \left( \log \frac{1}{\beta } \right)^{-1} |I| \gtrsim  C^{-3} \beta ^{4} \left( \log \frac{1}{\beta } \right)^{-1} n.$$
This completes the proof of Proposition \ref{prop:main} in the case that $\lambda_j=1$.

\section{Proposition \ref{prop:main} with coefficients}\label{sec:MartCo}
We start with the following refinement of Lemma \ref{lem:approx}.
\begin{lemma}\label{lem:approxCo}Let $\phi_1,\phi_2,\ldots,\phi_n$ be real-valued functions on a probability space $(\Omega,\mu)$ satisfying
\begin{equation}\label{eq:approx1Co}
||\phi_j||_{L^2}=1 \hspace{.3cm}\text{and}\hspace{.3cm}||\phi_j||_{L^{\infty}} \leq C
\end{equation}
\begin{equation}\label{eq:approx2Co}
\left|\left| \sum_{j=1}^{n} a_j \phi_j \right|\right|_{\psi_2} \leq C \left( \sum_{j=1}^{n} |a_j|^2 \right)^{1/2}
\end{equation}
for all coefficients $\{a_j\}$. In addition, let $R>10$ be a large real constant, and let $\Lambda_1, \Lambda_2, \ldots, \Lambda_K$ be a partition of the functions $\phi_1,\phi_2,\ldots,\phi_n$ into sets satisfying
$$ |\Lambda_{k+1}| \geq R |\Lambda_k|.$$
For $\epsilon >0$, there exists subsets $S_k \subseteq \Lambda_k$ such that $|S_{k}| \geq \delta(\epsilon, C) |\Lambda_k| $ with the following properties. Letting $S= \cup_{k} S_k$, there exists a martingale difference sequence $\{\theta_j\}_{j \in S}$ satisfying $||\theta_j||_{L^{\infty}} \leq C $ such that:
\begin{equation}\label{eq:approx3Co}
||\phi_j - \theta_j ||_{L^1} \leq \epsilon.
\end{equation}
In addition, there exists an ordering of $S$, say $j_1,j_2,\ldots,j_n$, such that
\begin{equation}\label{eq:approx4Co}
\mathbb{E} \left[\theta_{j_{s'}} | \theta_{j_{s}}, s' < s \right]=0.
\end{equation}
Moreover, one may take $\delta(\epsilon, C) \gtrsim   C^{-2} \epsilon^2 \left( \log \frac{C}{\epsilon} \right)^{-1}$.
\end{lemma}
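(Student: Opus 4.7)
The plan is to imitate the construction in Lemma \ref{lem:approx} while processing the blocks $\Lambda_1, \Lambda_2, \ldots, \Lambda_K$ in order. Within block $\Lambda_k$ I select $\lceil \delta |\Lambda_k| \rceil$ indices $j_t \in \Lambda_k$ successively, approximating $\phi_{j_t} - \mathbb{E}[\phi_{j_t}|\mathcal{G}_t]$ to $L^\infty$-error $\epsilon$ by a $V$-valued function $\theta_{j_t}$ that is constant on the atoms of $\mathcal{G}_{t+1}$. The martingale-difference property \eqref{eq:approx4Co} is automatic from this construction since $\theta_{j_t}$ approximates a function orthogonal to $\mathcal{G}_t$ by design.

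The key quantitative input is controlling the total count $t$ of previously selected indices when working inside $\Lambda_k$. The geometric growth hypothesis $|\Lambda_{k+1}| \geq R |\Lambda_k|$ with $R > 10$ gives
$$t \leq \sum_{k' \leq k} \lceil \delta |\Lambda_{k'}| \rceil \lesssim \delta |\Lambda_k|,$$
so $V^t \leq V^{O(\delta |\Lambda_k|)}$, matching the atom count in the proof of Lemma \ref{lem:approx} with $n$ replaced by $|\Lambda_k|$.

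The averaging step is then applied to the subsystem indexed by $\Lambda_k$. Since the $\psi_2(C)$ hypothesis \eqref{eq:approx2Co} is hereditary under setting coefficients to zero, it still applies to $\{\phi_j\}_{j \in \Lambda_k}$, and repeating the estimate in \eqref{eq:OrlHold} gives, for each atom $\alpha$ of $\mathcal{G}_t$,
$$\sum_{j \in \Lambda_k} |\langle \phi_j, 1_{\Omega_\alpha}\rangle| \lesssim C \sqrt{|\Lambda_k|}\,|\Omega_\alpha|\sqrt{\log(1/|\Omega_\alpha|)}.$$
Partitioning atoms at the threshold $\delta V^{-t}$ and summing exactly as in \eqref{eq:ABound}--\eqref{eq:AprimeBound} produces
$$\sum_{j \in \Lambda_k \setminus \mathcal{J}_t} \| \mathbb{E}[\phi_j | \mathcal{G}_t] \|_{L^1} \lesssim C |\Lambda_k| \sqrt{\delta \log(C/\epsilon)},$$
so pigeonholing yields $j_t \in \Lambda_k \setminus \mathcal{J}_t$ with $\|\mathbb{E}[\phi_{j_t}|\mathcal{G}_t]\|_{L^1} \lesssim C\sqrt{\delta\log(C/\epsilon)}$. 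Choosing $\delta \lesssim C^{-2}\epsilon^2/\log(C/\epsilon)$ as in Lemma \ref{lem:approx} makes this $\lesssim \epsilon$, and the $L^\infty$ approximation then yields $\|\phi_{j_t} - \theta_{j_t}\|_{L^1} \lesssim \epsilon$, closing the induction.

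The main obstacle I anticipate is a lower-order error in the large-atom sum: the factor $\sqrt{\log(V^t/\delta)}$ contributes a stray term of order $C\sqrt{|\Lambda_k|\log(1/\delta)}$, which after dividing by $|\Lambda_k|$ is only negligible once $|\Lambda_k| \gtrsim \log(1/\delta)/\epsilon^2 \sim 1/\delta$. Blocks with $|\Lambda_k| \ll 1/\delta$ must be handled separately; geometric growth ensures there are only $O(\log_R(1/\delta))$ of them, contributing $O(1/\delta)$ total indices, so one can either include them wholesale or skip them without affecting either the proportional bound $|S_k| \geq \delta |\Lambda_k|$ (essentially vacuous for such blocks, up to rounding) or the atom bound $V^t$ used in the selection of subsequent large blocks.
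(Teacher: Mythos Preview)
Your approach is essentially the paper's own: process the blocks $\Lambda_1,\ldots,\Lambda_K$ in order, and within each block run the selection procedure of Lemma~\ref{lem:approx}, using the geometric growth $|\Lambda_{k+1}|\ge R|\Lambda_k|$ to guarantee that the total number $t$ of indices already selected when working inside $\Lambda_k$ satisfies $t\lesssim \delta|\Lambda_k|$. The paper phrases the outer induction on $k$ and the inner induction on $t$ exactly as you do, citing the estimates \eqref{eq:ABound}--\eqref{eq:AprimeBound} verbatim with $n$ replaced by $|\Lambda_k|$.

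Regarding the obstacle you anticipate: it dissolves once you note that with the stated choice $\delta\asymp C^{-2}\epsilon^2(\log(C/\epsilon))^{-1}$ one has $\log(1/\delta)\asymp\log(C/\epsilon)$. Hence for any $t\ge 1$ the quantity $t\log(C/\epsilon)+\log(1/\delta)$ is $\lesssim t\log(C/\epsilon)$, and the ``stray'' term is absorbed into the main one; this is precisely how the paper's display \eqref{eq:ABound} is written, as the factored bound $t^{1/2}\bigl(\sqrt{\log(C/\epsilon)}+\sqrt{\log(1/\delta)}\bigr)$. No separate small-block regime is needed, so your proposed workaround, while harmless, is unnecessary.
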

\begin{proof}We will construct each $S_k \subset \Lambda_k$ by induction, using the same approach used in the proof of Lemma \ref{lem:approx}. The case $k=1$ can be handled by a direct application of Lemma \ref{lem:approx}. We will assume throughout that $S_{k'} \subset \Lambda_{k'}$ has been constructed for $k'\leq k$, and that
\begin{equation}\label{eq:SkSize}
|S_{k'}| \asymp \delta(\epsilon, M)|\Lambda_{k'}|  \asymp   C^{-2} \epsilon^2 \left( \log \frac{C}{\epsilon} \right)^{-1} |\Lambda_{k'}|.
\end{equation}
Thus after constructing $S_{k'}$ for $k' \leq k$, we have, assuming $R>10$,
\begin{equation}\label{eq:UnionSize}
|\bigcup_{1\leq k' < k} S_{k'} | \leq \sum_{1 \leq k' < k} \delta |\Lambda_{k'}| \leq   \delta |\Lambda_{k}| \sum_{1 \leq k' < k} R^{-k'} \leq 2 R^{-1}  \delta |\Lambda_{k}|.
\end{equation}
Moreover, we will construct the elements of $S_k$ by induction as well. Let use denote the set of indices associated to $\Lambda_k$ as $\overline{\Lambda_{k}} := \{j \in [n] : \phi_j \in \Lambda_{k}\}$. Assume we have constructed $t-1$ elements so far. Then, as above, the set algebra $\mathcal{G}_t$ satisfies $|\mathcal{G}_{t}| \leq V^{t}$. In addition let $\mathcal{J}_{t}^{(k)} := \{ j_{s} \in [n] : s < t , \phi_{j_{s}} \in \Lambda_k \}$. As in the proof of Lemma \ref{lem:approx}, we have

$$\sum_{j \in \overline{\Lambda_{k}} \setminus \mathcal{J}_{t}} \left|\left| \mathbb{E}\left[\phi_j |  \mathcal{G}_{s} \right] \right| \right|_{L^1} \leq \sum_{\alpha \in \mathcal{G}_{t}} \sum_{j\in \overline{\Lambda_{k}} \setminus \mathcal{J}_{t}} \left|\left<\phi_j, 1_{\Omega_\alpha} \right> \right|$$
\begin{equation}\label{eq:JtAveCoA}
= \sum_{\alpha \in A} \sum_{j\in \overline{\Lambda_{k}} \setminus \mathcal{J}_{t}^{(k)} } \left|\left<\phi_j, 1_{\Omega_\alpha} \right> \right| + \sum_{\alpha \in A'} \sum_{j\in \overline{\Lambda_{k}} \setminus \mathcal{J}_{t}^{(k)}} \left|\left<\phi_j, 1_{\Omega_\alpha} \right> \right|.
\end{equation}
Following \eqref{eq:AprimeBound}, we have
$$  \sum_{\alpha \in A'} \sum_{j\in \overline{\Lambda_{k}} \setminus \mathcal{J}_{t}^{(k)}} \left|\left<\phi_j, 1_{\Omega_\alpha} \right> \right| \lesssim  |\Lambda_k| |\mathcal{G}_{t}| C \delta V^{-t} \lesssim C \delta |\Lambda_k|.$$

Similarly, following \eqref{eq:ABound}, we have
$$  \sum_{\alpha \in A} \sum_{j\in \overline{\Lambda_{k}} \setminus \mathcal{J}_{t}^{(k)}} \left|\left<\phi_j, 1_{\Omega_\alpha} \right> \right| \lesssim  C |\Lambda_{k}|^{1/2} t^{1/2} \sqrt{ \log \left(\frac{C}{\epsilon} \right)}.$$
As before, one may take $t$ as large as $\lesssim |\Lambda_k | \times C^{-2} \epsilon^{2} \left(\log \frac{C}{\epsilon} \right)^{-1}$. Selecting the implicit universal constant in the definition of $\delta(\epsilon, C)$ sufficiently small, we may assume that one may take, say, $t \leq 10 \delta(\epsilon, C)|\Lambda_k|$.

On the other hand, from \eqref{eq:UnionSize}, we have that $|\bigcup_{k' < k} S_{k'}| \leq R^{-1} \delta |\Lambda_k|$. We may thus find $S_{k} \subset \Lambda_k$ such that
$$|S_{k}| \geq  10 \delta |\Lambda_k| - 2R^{-1} \delta |\Lambda_k|  = (10 - 2R^{-1} ) \delta |\Lambda_k|  \geq \delta |\Lambda_k|.$$
This completes the proof.
\end{proof}

Next we record the following elementary observation, following Lemma 3 in \cite{BourRiesz}.
\begin{lemma}\label{lem:Coeff}Let $\phi_1,\phi_2,\ldots,\phi_n$ be functions uniformly bounded by $C$ satisfying the hypotheses of Lemma \ref{lem:approxCo}, and  $\lambda_1,\lambda_2,\ldots,\lambda_n$ be complex coefficients such that
$$\sum_{j=1}^{n}|\lambda_j|=1.$$
Given $\epsilon >0$ there exists a set $S \subseteq [n]$ and a martingale differences sequence $\theta_{j_1},\theta_{j_2},\ldots$ indexed by elements of $S$ satisfying \eqref{eq:approx3} and \eqref{eq:approx4}, such that
$$\sum_{j\in S}|\lambda_j| \gtrsim \delta(C,\epsilon) > 0.$$
\end{lemma}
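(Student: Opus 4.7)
Plan. The approach is to reduce to Lemma~\ref{lem:approxCo} via a dyadic decomposition of the coefficient sequence, following Lemma~3 of \cite{BourRiesz}. After normalizing $\sum_j|\lambda_j|=1$, I would partition the indices into dyadic blocks $B_\ell:=\{j:|\lambda_j|\in(2^{-\ell-1},2^{-\ell}]\}$: on each $B_\ell$ the coefficients lie within a factor of $2$ of $2^{-\ell}$, the block mass $M_\ell:=\sum_{j\in B_\ell}|\lambda_j|$ satisfies $M_\ell\asymp|B_\ell|\cdot 2^{-\ell}$, and $\sum_\ell M_\ell=1$.

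The key step is to extract a subsequence of blocks $B_{\ell_1},\dots,B_{\ell_K}$ satisfying both (i) the growth condition $|B_{\ell_{k+1}}|\geq R|B_{\ell_k}|$ (with $R>10$) demanded by Lemma~\ref{lem:approxCo}, and (ii) retained mass $\sum_k M_{\ell_k}$ bounded below by a constant depending only on $R$. I would sort the nonempty $B_\ell$'s by size, greedily extract a size-subsequence with consecutive ratios $\geq R$, and group the remaining blocks into the ``runs'' lying between consecutive greedy picks---so within each run the sizes $|B_\ell|$ are within a factor of $R$. Within a single run, the identity $M_\ell\asymp|B_\ell|\cdot 2^{-\ell}$ forces the mass to be dominated (up to $O(R^2)$) by the block with smallest $\ell$ in the run, so that block is the natural representative. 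Finally, a parity pigeonhole over the runs lets me keep every other representative, simultaneously amplifying consecutive size ratios to $\geq R$ and retaining $\gtrsim 1/R^2$ of the total mass.

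Applying Lemma~\ref{lem:approxCo} to the partition $\Lambda_k=B_{\ell_k}$ then yields subsets $S_k\subseteq\Lambda_k$ with $|S_k|\geq\delta(C,\epsilon)|\Lambda_k|$, together with the required joint martingale difference approximation. Since $|\lambda_j|\asymp 2^{-\ell_k}$ uniformly on $\Lambda_k$, the cardinality bound converts to the mass bound
\[
\sum_{j\in S_k}|\lambda_j|\gtrsim 2^{-\ell_k}|S_k|\gtrsim \delta(C,\epsilon)\cdot M_{\ell_k},
\]
and summing over $k$ gives $\sum_{j\in S}|\lambda_j|\gtrsim\delta(C,\epsilon)$.

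The main obstacle is the combinatorial extraction in the second step: verifying that the greedy-plus-parity construction simultaneously delivers the $R$-geometric size growth and preserves a definite fraction of the total mass. The specific structure $|B_\ell|\asymp M_\ell\cdot 2^\ell$ of the dyadic blocks is what makes this work---it is precisely what forces each run of similarly sized blocks to be mass-dominated by its smallest-$\ell$ representative, so that keeping only one representative per run (via the pigeonhole) does not leak all of the mass.
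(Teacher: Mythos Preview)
Your proposal is correct and follows essentially the same strategy as the paper: both reduce to Lemma~\ref{lem:approxCo} via a level-set decomposition of the coefficients in the spirit of Lemma~3 of \cite{BourRiesz}, extract a subfamily of blocks with $R$-geometric size growth, and check that a definite fraction of the $\ell^1$ mass survives.

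The only noteworthy difference is that the paper works directly with $R$-adic blocks $U_k=\{j:R^{-k-1}<|\lambda_j|\leq R^{-k}\}$ and runs its greedy selection along the index $k$ (i.e., in decreasing order of coefficient magnitude): one sets $k_{j+1}=\min\{k>k_j:|U_k|\geq R|U_{k_j}|\}$, so that the chosen blocks $U_{k_j}$ themselves already satisfy $|U_{k_{j+1}}|\geq R|U_{k_j}|$ and can be fed straight into Lemma~\ref{lem:approxCo}. The mass estimate is then immediate, since for $k_j<k<k_{j+1}$ one has both $|U_k|<R|U_{k_j}|$ and $|\lambda_i|\leq R^{-k}$, so the discarded mass between picks is geometrically dominated by the kept block $U_{k_j}$. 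Your dyadic variant---sorting by block size, grouping into runs, choosing a smallest-$\ell$ representative, then passing to a parity subclass to restore the $R$-ratio---arrives at the same place, but the extra representative-plus-parity layer is exactly what the $R$-adic scale (matched to the growth constant in Lemma~\ref{lem:approxCo}) and the $k$-order traversal let the paper avoid.
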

\begin{proof}Let $R$ be the constant appearing in Lemma \ref{lem:approxCo} and $\delta:=\delta(C,\epsilon)$. Define
$$U_k = \{\phi_k : R^{-k} \geq |\lambda_k | > R^{-k-1}  \}$$
and $\overline{U}_k := \{ k \in \mathbb{N}: \phi_k \in U_k\}$ (we will use this convention of denoting an associated index set with an overline throughout the proof). Define $Z_{\text{e}}$ and $Z_{\text{o}}$ (respectively $\overline{Z}_{\text{e}}$ and $\overline{Z}_{\text{o}}$) as
$$ Z_{\text{e}} := \bigcup_{k \text{ even}} U_k \hspace{1cm} \text{and} \hspace{1cm} Z_{\text{o}} := \bigcup_{k \text{ odd}} U_k.$$
Since
$$ \sum_{j \in \overline{ Z}_\text{e}} |\lambda_j|  + \sum_{j' \in \overline{ Z}_{\text{o}} }|\lambda_{j'}| \geq 1.$$
We may find $Z \in \{ Z_{\text{e}},  Z_{\text{o}} \}$ satisfying
$$\sum_{k \in \overline{ Z}} |\lambda_k| \geq \frac{1}{2}.$$
Let $N$ denote the set of even (respectively odd) integers if $Z=Z_{\text{e}}$ (respectively $Z=Z_{\text{0}}$).
Next define $k_0=0$ and $k_{j+1} = \min\{k > k_j : |U_{k}| \geq R |U_{{k_j}}|, k \in N \}$. Taking $V_k = U_{j_k}$, we have $|V_{k+1}| \geq R |V_k|$, which allows us to invoke Lemma \ref{lem:approxCo} to obtain subsets $\Lambda_k \subset V_k$ such that $|\Lambda_k| \geq \delta |V_k|$ and satisfying the other conclusions of the Lemma \ref{lem:approxCo}.  We have
$$1 = \sum_{j \in \mathbb{N}} \sum_{k_j < k \leq k_{j+1}} \sum_{i \in \overline{U}_k} |\lambda_i| \leq \sum_{j \in \mathbb{N}} \sum_{k > k_j } R^{-2k_j+1} R \delta^{-1}|\Lambda_{k_j} |    $$
$$\lesssim \delta^{-1}R^2 \sum_{j \in \mathbb{N}} R^{-2k_{j}} |\Lambda_{k_j} | \lesssim \delta^{-1}R^2 \sum_{i \in \overline{Z} }|\lambda_i|. $$
Thus, letting $S = \bigcup_{j \in \mathbb{N}} \overline{\Lambda}_{k_j}$, we have
$$\delta R^{-2} \lesssim \sum_{i \in S} |\lambda_i| $$
which completes the proof.
\end{proof}

We now are ready to prove Proposition \ref{prop:main} with the added uniform boundedness assumption $||\phi_j||_{L^\infty} <C$. This assumption will be removed in the next section. By multiplying the system elements $\phi_j$ by unimodular complex numbers, it suffices to assume that the $\lambda_j$ are non-negative real numbers. As before, we start by assuming that $\phi_1,\phi_2, \ldots, \phi_n$ are  real-valued functions on a probability spaces satisfying the $\psi_2(C)$ condition. Let $\sum_{j=1}^{n}|\lambda_j| =1$, and let $S\subseteq [n]$ satisfy Lemma \ref{lem:Coeff} for a choice of $\epsilon >0$ to be specified later. Denoting the martingale difference approximations given by the lemma as $\{\theta_j\}$, we again have
\begin{equation}\label{eq:ProductProb2}
\int \prod_{j \in S} \left( 1+ \frac{\epsilon_j}{C} \theta_j(\omega) \right) d\omega = 1
 \end{equation}
for all $\epsilon_j \in \{-1,1\}$. Let $x_1,x_2,\ldots,x_n$ denote a sequence of vectors in a real or complex normed space $X$ and assume that $\phi_1,\phi_2,\ldots,\phi_n$ are real-valued functions satisfying the hypothesis of Proposition \ref{prop:main}. We then have that

$$ \int || \sum_{j=1}^{n} \lambda_j r_j(\omega)  x_j ||d\omega  \geq  \int || \sum_{j=1}^{n} \lambda_j r_j(\omega) x_j ||   \prod_{j \in S} \left( 1+ \frac{r_j(\omega)}{C} \theta_j(\omega_2) \right) d\omega d\omega_2  \geq$$

\begin{equation}\label{eq:PhiXLowerCo}\frac{1}{C} \int || \sum_{j \in S} \lambda_j \theta_j(\omega_2) x_j  ||   d\omega_2 \geq  \frac{1}{C} \left( \int || \sum_{j \in S} \lambda_j \phi_j(\omega) x_j ||d\omega - \epsilon\sum_{j \in S} \lambda_j  \right).
\end{equation}
As before, in the case of a complex system $\{\phi_j\}$ we will split each function into real and imaginary parts as $\phi_j = \phi_{j}' + i \phi_j''$. Given that
\begin{equation}\label{eq:Gamman2}
\int || \sum_{j=1}^{n} \lambda_j \phi_j (\omega) x_j || d\omega > \beta \sum_{j=1}^{n}|\lambda_j|,
\end{equation}
without loss of generality we may assume that
\begin{equation}\label{eq:GammaPrimen2}
\int || \sum_{j=1}^{n} \lambda_j \phi_j' (\omega) x_j || d\omega > \frac{1}{2}\beta \sum_{j=1}^{n}|\lambda_j|.
\end{equation}
Furthermore, we may find a subset $I \subseteq [n]$ with $ \sum_{j \in I} \lambda_j \gtrsim \frac{1}{2}\beta \sum_{j=1}^{n}\lambda_j$ and such that for each $S \subseteq I$ one has
\begin{equation}\label{eq:GammaI2}
\int || \sum_{j \in S}  \phi_j' (\omega) x_j || d\omega \gtrsim \frac{1}{2}\beta \sum_{j \in S} \lambda_j.
\end{equation}
Proceeding as before, applying \eqref{eq:PhiXLowerCo} with $[n]$ replaced by $I$ we have
$$\int  || \sum_{j=1}^{n} \lambda_j  r_j (\omega) x_j || d\omega  \gtrsim \int  || \sum_{j \in I} \lambda_j r_j (\omega) x_j || d\omega \gtrsim  \frac{1}{C} \left( \int || \sum_{j \in S} \lambda_j \phi_j'(\omega) x_j ||d\omega - \epsilon \sum_{j \in S} \lambda_j  \right) $$
where $\sum_{j \in S} \lambda_j \gtrsim \delta(\epsilon, C) \sum_{j \in I} \lambda_j  \gtrsim \frac{1}{2} \beta \delta(\epsilon, C) \sum_{j=1}^{n}\lambda_j$. Taking $\epsilon \lesssim \frac{\beta}{2}$, we may lower bound the quantity above by
$$C^{-1}\left(\frac{1}{2}\beta - \epsilon \right) \sum_{j \in S} \lambda_j \gtrsim C^{-1}\beta \delta\left(\frac{\beta}{4}, C\right) \gtrsim  C^{-3} \beta^{4} \left( \log \frac{1}{\beta} \right)^{-1} .$$
This completes the proof of Proposition \ref{prop:main}.

\section{Proposition \ref{prop:main} for unbounded systems}\label{sec:unbounded}
In the proof of Proposition \ref{prop:main} given in the previous section we assumed that the elements of the system were uniformly bounded by $C$. In this section we show that this condition may be removed.

 Let $x_1,x_2,\ldots x_n$ denote points in a real or complex normed space $X$, such that $||x_i||\leq 1$. Assume that
\begin{equation}\label{eq:PhiLowerBound}
\gamma \sum_{j=1}^{n}|\lambda_j| \leq \int ||\sum_{j=1}^{n} |\lambda_j| \phi_j(\omega) x_j  ||d\omega.
\end{equation}
Using the assumption that $\{\phi_j\}$ is a $\psi_2(C)$ system, we have $\mu \left[ |\phi_j| \geq y \right] \lesssim e^{-y^2/C^2}$. Thus
$$ \gamma \sum_{j=1}^{n} |\lambda_j|  \lesssim \int ||\sum_{j=1}^{n} \lambda_j \phi_j(\omega) x_j  ||d\omega \lesssim  \int ||\sum_{j=1}^{n} \lambda_j \phi_j(\omega) 1_{\{|\phi_j| \leq y \}} x_j  ||d\omega
+  \sum_{j=1}^{n}|\lambda_j| e^{-y^2/C^2}.$$
Thus,
$$ \gamma  \sum_{j=1}^{n} |\lambda_j| - C' \sum_{j=1}^{n} |\lambda_j| e^{-y^2/C^{2}} \lesssim  \int ||\sum_{j=1}^{n} \phi_j(\omega) y^{-1} 1_{\{|\phi_j| \leq y \}} x_j  ||d\omega.$$
Selecting $y \asymp   \sqrt{C^2 \log \left( \frac{1}{\gamma} \right)}$ we then have that
$$\gamma \sum_{j=1 }^{n} |\lambda_j|   \lesssim  \int||\sum_{j=1 }^{n}  \lambda_j  \phi_j(\omega)  1_{\{|\phi_j| \leq y \}} x_j  ||d\omega.$$
Now the truncated system ${\phi_j(\omega)  1_{\{|\phi_j| \leq y \}}}$ is uniformly bounded by $\sqrt{C^2 \log \left( \frac{1}{\gamma} \right)}$ and thus one may apply the uniformly bounded case of Proposition \ref{prop:main} proved in the previous section. This argument also shows how the second claim of Theorem \ref{thm:RadMajor} follows from the first claim.

\section{Five-fold real-valued tensor systems are Sidon}\label{sec:5foldReal}
In this section we prove Theorem \ref{thm:tensor}. For the sake of exposition, we prove the result for real-valued systems first. The complex case, which requires some additional technical details, will be presented in the next section.

\begin{theorem}\label{thm:tensorR}Let $\{\phi_j\}$ be a OS uniformly bounded by $C$ and satisfying the $\psi_2(C)$ condition. Then the OS obtained as a five-fold tensor, $\{\Phi_j := \phi_j\otimes \phi_j \otimes \phi_j \otimes \phi_j\otimes \phi_j  \}$, is Sidon.
\end{theorem}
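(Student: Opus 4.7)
The plan is to combine the martingale-difference approximation from Section~\ref{sec:MartCo} with a Riesz-product test measure on the product space $\Omega^5$, exploiting that the fifth tensor power preserves real signs via $\eta^5=\eta$. First I would split the coefficients into real and imaginary parts and absorb signs into the $\phi_j$, reducing to real $a_j\geq 0$ and setting $\eta_j:=\mathrm{sign}(a_j)\in\{-1,+1\}$. For a small $\epsilon>0$ to be chosen later, Lemma~\ref{lem:Coeff} yields a subset $S\subseteq[n]$ carrying at least a constant fraction of the $\ell^1$ mass, $\sum_{j\in S}|a_j|\geq c\sum_j|a_j|$, together with a martingale difference sequence $\{\theta_j\}_{j\in S}$ with respect to a filtration $\{\mathcal{F}_k\}$ on $\Omega$, satisfying $\|\theta_j\|_\infty\leq C$ and $\|\phi_j-\theta_j\|_{L^1}\leq\epsilon$.

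The key observation is that on $(\Omega^5,\mu^{\otimes 5})$, equipped with the product filtration $\mathcal{G}_k:=\mathcal{F}_k^{\otimes 5}$, the fifth tensor powers $\Theta_j:=\theta_j^{\otimes 5}$ still form a martingale difference sequence, since
\[
\mathbb{E}[\Theta_{j_k}\mid\mathcal{G}_{k-1}]=\prod_{l=1}^{5}\mathbb{E}[\theta_{j_k}(x_l)\mid\mathcal{F}_{k-1}]=0.
\]
Because $\|\Theta_j\|_\infty\leq C^5$, the Riesz product
\[
P:=\prod_{j\in S}\!\left(1+\frac{\eta_j}{C^5}\,\Theta_j\right)
\]
is nonnegative and the MDS Riesz-product identity in the product filtration gives $\int P\,d\mu^{\otimes 5}=1$. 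Using $P$ as a test probability measure, $\sup_x|\sum_j a_j\Phi_j(x)|\geq |\int (\sum_j a_j\Phi_j)\,P|$ reduces the theorem to a lower bound on this integral. Tensor factorization yields
\[
\int\Phi_j P=\sum_{A\subseteq S}\Bigl(\prod_{j'\in A}\tfrac{\eta_{j'}}{C^5}\Bigr)\Bigl(\int_\Omega \phi_j\prod_{j'\in A}\theta_{j'}\Bigr)^{\!5},
\]
and for $j\in S$ the singleton $A=\{j\}$ contributes $(\eta_j/C^5)(\int\phi_j\theta_j)^5\approx \eta_j/C^5$ since $\int\phi_j\theta_j=1+O(C\epsilon)$; the fifth power is precisely what is needed for $\eta_j^5=\eta_j$ to preserve the sign of $a_j$. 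Summed over $j\in S$, this produces the target main term $\gtrsim (c/C^5)\sum_j|a_j|$.

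The hard part will be controlling the error terms. For $A\not\ni j$ with $|A|\geq 1$, MDS on the single copy of $\Omega$ makes $\int \theta_j\prod_{j'\in A}\theta_{j'}=0$, so the inner integral reduces to $\int(\phi_j-\theta_j)\prod\theta_{j'}$, of size at most $\epsilon C^{|A|}$; raised to the fifth power and multiplied by the coefficient $C^{-5|A|}$, each term is $O(\epsilon^5)$. The main technical obstacle is to prevent blow-up when summing over the $2^{|S|}$ subsets $A$, and I would address this by replaying the dual-Orlicz argument used in the proof of Lemma~\ref{lem:approx}: the $\psi_2(C)$ hypothesis bounds $\|\sum_{j'\in A}\varepsilon_{j'}\theta_{j'}\|_{\psi_2}\lesssim C\sqrt{|A|}$, which paired with the tensor-structure estimate ensures that only ``low-complexity'' subsets $A$ contribute nontrivially. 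The case $A\ni j$ with $|A|\geq 2$ is handled in parallel by writing $\theta_j^2=\mathbb{E}[\theta_j^2]+(\theta_j^2-\mathbb{E}[\theta_j^2])$ and applying MDS to the mean-zero residual. Finally, the contributions from $j\notin S$ are dealt with via the mass-capture bound of Lemma~\ref{lem:Coeff}, which confines $\sum_{j\notin S}|a_j|$ to a definite fraction of the total mass, combined with a uniform estimate $|\int\Phi_j P|=O(1/C^5)$ for $j\notin S$ that follows from the same $\psi_2$ duality. Choosing $\epsilon=\epsilon(C)$ sufficiently small independently of $n$ then yields the Sidon inequality with constant $\gamma=\gamma(C)>0$.
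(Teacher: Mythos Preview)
Your test measure $P=\prod_{j\in S}(1+C^{-5}\eta_j\Theta_j)$ expands to the same object the paper builds with its Rademacher-coupled product, namely $\sum_{A\subseteq S}\delta^{|A|}\bigl(\prod_{j\in A}\eta_j\bigr)\nu_A^{\otimes 5}$ where $\nu_A=\prod_{j\in A}\theta_j$, so the overall architecture and the identification of the main term are correct. The gap is in the error analysis.

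What you are missing is the paper's key device, the Bessel-type inequality
\[
\sum_{A\subseteq S} C^{-4|A|}\,\bigl|\langle \nu_A, f\rangle\bigr|^2 \leq 1 \qquad\text{whenever }\|f\|_\infty\le C,
\]
obtained by observing that $y\mapsto\sum_A C^{-2|A|}W_A(y)\langle\nu_A,f\rangle=\int f\prod_{j\in S}(1+C^{-2}r_j(y)\theta_j)$ is bounded by $1$ and then applying Parseval in the Walsh system. This single estimate is what allows one to sum over all $2^{|S|}$ subsets with no dependence on $|S|$ or $n$. Your termwise bound $C^{-5|A|}(\epsilon C^{|A|})^5=\epsilon^5$ is correct, but summed even over the singletons it gives $|S|\,\epsilon^5$, and $|S|$ is not bounded independently of $n$ by Lemma~\ref{lem:Coeff}; the dual-Orlicz argument from Lemma~\ref{lem:approx} controls sums over indices $j$, not over subsets $A$, and does not yield the $\ell^2$ estimate above. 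With the Bessel inequality in hand the paper gets $\sum_{|A|\ge 2}\delta^{|A|}|\langle\nu_A,\phi_i\rangle|^5\le C^8\delta^2$, and for off-diagonal singletons it uses orthogonality $\langle\phi_j,\phi_i\rangle=0$ together with $\|\phi_j-\theta_j\|_1\le\epsilon$ to write $|\langle\theta_j,\phi_i\rangle|^5\le\epsilon^3|\langle\theta_j,\phi_i\rangle|^2$ and then sums via Bessel to obtain $C^4\epsilon^3$, uniformly in $i$.

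Two further issues. Your handling of $i\notin S$ cannot close as written: Lemma~\ref{lem:Coeff} only gives $\sum_{j\in S}|a_j|\gtrsim\delta(\epsilon,C)\sum_j|a_j|$, so $\sum_{i\notin S}|a_i|$ is of the same order as the total mass, and a bound $|\langle\Phi_i,P\rangle|=O(C^{-5})$ for such $i$ would be of the same size as the main term and hence unabsorbable; what is actually needed (and what the Bessel$+$orthogonality argument delivers) is $|\langle\Phi_i,P\rangle|\lesssim\delta\epsilon^3+\delta^2$ uniformly in $i$. Finally, your stated reason for needing five copies is off: in your own expansion $\eta_j$ appears to the first power, not the fifth, and any odd tensor power would preserve signs. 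The real reason, visible once the Bessel step is in place, is that a $k$-fold tensor makes the off-diagonal singleton error $C^4\epsilon^{\,k-2}$, and one needs $k-2\ge 3$ to beat the main-term density $\sim\epsilon^2(\log C/\epsilon)^{-1}$ coming from the size of $S$; with $k=4$ both are $\sim\epsilon^2$ and the argument fails, as the paper's Remark notes.
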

\begin{proof}Let $\phi_1,\phi_2,\ldots,\phi_5$ denote independent copies of the system $\{\phi_i\}$ on probability spaces $\Omega_1,\Omega_2,\ldots,\Omega_5$, respectively. Furthermore let $\tilde{\Omega}=\otimes_{s=1}^{5} \Omega_s$ and let ${r_i^{(1)}},{r_i^{(2)}},{r_i^{(3)}},{r_i^{(4)}}$ denote independent Rademacher functions on a distinct probability space $\mathbb{T}$. For a fixed set of coefficients $\{a_i\}$ and $\epsilon>0$, applying Lemma \ref{lem:approxCo} gives a martingale difference sequence, $\{\theta_j\}$, with the following properties:
\begin{equation}\label{eq:RTSsize}
\sum_{i \in A} |a_i| \gtrsim C^{-2} \epsilon^2 \left( \log \frac{C}{\epsilon} \right)^{-1} \sum_{i=1 }^{n}  |a_i|,
\end{equation}
for all $i\in[n]$
\begin{equation}\label{eq:RTSbound}
||\theta_i||_{L^{\infty}} \leq C
\end{equation}
and
\begin{equation}\label{eq:RTSappr}
||\phi_i - \theta_i||_{L^1} \leq \epsilon.
\end{equation}
For $0< \delta <1$ and $\alpha_i \in [-1,1]$, define
$$\mu_{(\alpha,\delta)} := \int_{\mathbb{T}} \prod_{i \in A} \left(1 + \delta \alpha_i r_i^{(1)} \theta_i(x_1) \right) \prod_{i \in A} \left(1 + \delta \alpha_i r_i^{(2)} r_i^{(1)} \theta_i(x_2) \right) \prod_{i \in A} \left(1 + \delta \alpha_i r_i^{(3)} r_i^{(2)} \theta_i(x_3) \right)  \times $$
$$\prod_{i \in A} \left(1 + \delta \alpha_i r_i^{(3)} r_i^{(4)}\theta_i(x_4) \right) \prod_{i \in A} \left(1 + \delta \alpha_i r_i^{(4)}  \theta_i(x_5) \right)d\omega. $$
Expanding out the product, and defining $\nu_{S}(x) :=\prod_{i\in S} \theta_i(x) $, we see that
\begin{equation}\label{eq:muExpand}\mu_{(\alpha,\delta)}= \sum_{S \subseteq A} \delta^{|S|} \prod_{i\in S} \alpha_i \prod_{i\in S} \theta_i(x_1)\ldots \theta_i(x_5) =
 \sum_{S \subseteq A} \delta^{|S|} \prod_{i\in S} \alpha_i  \bigotimes_{j=1}^{5} \nu_{S}(x_{j}).
\end{equation}
Note the use of Rademacher functions in the definition of $\mu_{(\alpha,\delta)}$ leads to the elimination of certain terms involving products of the functions $\theta_i$'s in the expression above.  Assuming $\delta$ is sufficiently small depending on $C$ we clearly have that
\begin{equation}\label{eq:muL1}
||\mu_{(\alpha,\delta)}||_{L^1(\tilde{\Omega})}=1.
\end{equation}
To each subset $S \subseteq A$ we may associate a Walsh function on, say, the probability space $\mathbb{T}$ in the usual manner. In particular, let  $r_1,r_2,\ldots,r_m$ denote a system of Rademacher functions on $\mathbb{T}$ and form the associated Walsh system element associated to $S$ by $W_{S}(y) := \prod_{i\in S} r_i(y)$. Given $f$ such that $||f||_{L^\infty_x} \leq C$,  observe that
$$\left| \sum_{S \subseteq A} C^{-2|S|} W_{S}(y)\left<\nu_{S},f \right> \right| \leq \left|\left| \prod_{i \in A} \left(1+ C^{-2}r_i(y) \theta_i(x)\right) \right| \right|_{L^1_x} =1$$
where we have used $|C^{-2}\theta_i(x)f(x)| \leq 1$.  Since the function of $y$ defined by the expression on the left above is uniformly bounded by $1$ and thus has $L^2(\mathbb{T})$ norm at most $1$, Bessel's inequality gives us that
\begin{equation}\label{eq:muL2}
\sum_{S \subseteq A} C^{-4|S|} |\left< \nu_S, f \right>|^2 \leq 1.
\end{equation}
Using \eqref{eq:muExpand}, we have that
$$\left<\mu_{(\alpha,\delta)}, \Phi_i \right> = \delta \sum_{j\in A} \alpha_j |\left<\theta_j,\phi_i \right>|^5 +
\sum_{\substack{S \subseteq A \\ |S| \geq 2}} \delta^{|S|} \prod_{j \in S} \alpha_j |\left<\nu_S, \phi_i \right>|^5.
$$
We will estimate each of these terms separately. We start by estimating the second using \eqref{eq:muL2}. Provided $C^8\delta^2<1$, this gives
$$\sum_{\substack{S \subseteq A \\ |S| \geq 2}} \delta^{|S|} \prod_{j \in S} \alpha_j |\left<\nu_S, \phi_j \right>|^5 \leq C^{8}\delta^2.$$
We now consider the first term. By orthogonality and \eqref{eq:RTSappr} we have that
$$|\left<\theta_j, \phi_i \right>|^5 \leq |\left<\theta_j, \phi_i \right>|^2 \left(\left<\phi_j,\phi_i \right>+\epsilon \right)^{3}. $$
From this and \eqref{eq:muL2} we have, for $i \notin A$, that
$$\sum_{j\in A} |\alpha_j |\left<\theta_j,\phi_i \right>|^5 \leq \sum_{j \in A} |\left<\theta_j,\phi_i \right>|^5 \leq C^{4}\epsilon^3. $$
For $i \in A$, using again \eqref{eq:muL2}, we have that
\begin{equation}\label{eq:thetaphiAppr}
\left|\sum_{j\in A} \alpha_j \left|\left<\theta_j,\phi_i \right> \right|^5 - \alpha_i \left|\left<\theta_i,\phi_i \right>\right|^5 \right| \leq C^4 \epsilon^3.
\end{equation}
Finally we have
\begin{equation}\label{eq:thetaphiIP}
|\left<\phi_i,\theta_i \right>| \geq \left<\phi_i,\phi_i \right> - |\left<\phi_i,\phi_i-\theta_i \right> | \geq 1 - C\epsilon.
\end{equation}
Setting $\alpha_j = \text{sign}( a_j)$ for $j\in A$, the preceding estimates imply
$$\left<\sum_{i=1}^{n} a_i \Phi_i, \mu_{(\alpha,\delta)} \right> $$
$$\geq \delta \sum_{i \in A}|a_i| \left<\theta_i,\phi_i \right>^5 - \delta\left( \sum_{i\in A}|a_i| \right) \epsilon^3 - \delta \left(\sum_{i \notin A} |a_i| \right)\epsilon^3 - \delta^2 \sum_{i=1 }^{n} |a_i|.$$
Using \eqref{eq:thetaphiIP}, provided $\epsilon \lesssim C^{-1}$, we have that
$$\left<\sum_{i=1}^{n} a_i \Phi_i, \mu_{(\alpha,\delta)} \right>  \geq \delta \left(\frac{1}{2} \sum_{i \in A} |a_i| - C^4 \epsilon^3 \sum_{i=2}^{n}|a_i| - C^{8}\delta \sum_{i=1}^{n}|a_i| \right). $$
Recalling \eqref{eq:RTSsize}, we have that the quantity above is
$$\geq \delta \left(\frac{1}{2}C^2 \epsilon^2 \left( \log \frac{C}{\epsilon} \right)^{-1} -  C^4 \epsilon^3 - C^{8}\delta \right) \sum_{i=1}^{n}|a_i| .$$
The result follows by an appropriate choice of $\delta$ and $\epsilon$.
\begin{remark}If we replace the five-fold with a four-fold tensor in the preceding argument, the $\epsilon^3$ term in the previous display would be replaced by a factor of $\epsilon^2$ which would not be sufficient to conclude the proof.
\end{remark}

\section{Five-fold complex-valued tensor systems are Sidon}\label{sec:5foldComplex}
In this section we will develop a complex analog of the previous argument. This requires some additional notation. First let us denote the real and imaginary part of $\phi_j$ as $\phi_j=\phi_j'+i\phi_j''$. Given a sequence of complex scalars ${\alpha_j}$ let ${\theta_j'}$ and ${\theta_j''}$ denote respective martingale difference approximations satisfying \eqref{eq:RTSsize}, \eqref{eq:RTSbound}, and \eqref{eq:RTSappr}. Define real numbers $a_j$ and $b_j$ by $a_j+ib_j:=\text{sign}(\alpha_j)$.

Consider the $2^5$ 5-tuples of real and imaginary parts of system elements, ${\theta_j'}$ and ${\theta_j''}$. Call this set $T$. In a slight abuse of notation, it will be convenient to think of $T=\{',''\}^{5}$ as specifying a choice of either $\theta_j'$ or $\theta_j''$ in each of five coordinates. With this convention, for $t=(t_1,t_2,\ldots,t_5) \in T$ define $\nu_{S}^{(t_s)}= \prod_{i\in S} \theta_i^{(t_s)}(x)$. For each $t \in T$ we also define

\begin{equation}\label{eq:muExpandComplex}\mu_{(\beta^{(t)},\delta)}^{(t)}= \sum_{S \subseteq A} \delta^{|S|} \prod_{i\in S} \beta_i \prod_{i\in S} \theta_i^{(t_1)}(x_1)\ldots \theta_i(x_5)^{(t_5)} =
 \sum_{S \subseteq A} \delta^{|S|} \prod_{i\in S} \beta_i^{(t)}  \bigotimes_{s=1}^{5} \nu_{S}^{(t_s)}(x_{s}).
\end{equation}
As before, if $\delta \leq C^{-1}$ we have $||\mu||_{L^{\infty}} = 1$.
Next we will define $2\times2^5$ sequences of real numbers ${\beta^{(t)}_j}$ and ${\rho^{(t)}_j}$, indexed by $t \in T$. We let these sequences be specified by the relation
\begin{equation}\label{eq:defineAlphaBeta} (a_j+ib_j)\prod_{s=1}^{5} \left((\theta_j'(x_s)+i \theta_j''(x_s) \right)= \sum_{t \in T}\left( \beta_j^{(t)} \prod_{s=1}^{5} \theta_j^{(t_s)}(x_s) + i \rho_j^{(t)} \prod_{s=1}^{5} \theta_j^{(t_s)}(x_s) \right).
\end{equation}
We then have
$$\mu_{(\delta)} = \frac{1}{2^6} \left(\sum_{t \in T} \mu_{(\beta^{(t)},\delta)}^{(t)} + i\sum_{t \in T} \mu_{(\rho^{(t)},\delta)}^{(t)}\right) $$
$$=\frac{1}{2^6} \left( \sum_{t \in T} \sum_{S \subseteq A} \delta^{|S|} \prod_{j\in S} \beta_j^{(t)}  \bigotimes_{s=1}^{5} \nu_{S}^{(t_s)}(x_{s})  + i \sum_{t \in T} \sum_{S \subseteq A} \delta^{|S|} \prod_{j\in S} \rho_j^{(t)}  \bigotimes_{s=1}^{5} \nu_{S}^{(t_s)}(x_{s})   \right). $$
As before, for $\delta \leq C^{-1}$,  we have $||\mu_{(\delta)}||_{L^{\infty}} \leq 1$. Given $||f||_{L^\infty}\leq C$, we have that
$$\left| \sum_{S \subseteq A} C^{-2|S|} W_{S}(y)\left<\nu_{S}^{(t_s)},f \right> \right| \leq \left|\left| \prod_{i \in A} \left(1+ C^{-2}r_i(y) \theta_i^{(t_s)}(x)\right) \right| \right|_{L^1_x} =1$$
which implies
\begin{equation}\label{eq:muL2Complex}
\sum_{S \subseteq A} C^{-4|S|} \left|\left< \nu_{S}^{(t_s)}, f \right>\right|^2 \leq 1.
\end{equation}
Using \eqref{eq:defineAlphaBeta} one has
$$\left<\mu_{(\delta)}, \Phi_i \right> = \delta \sum_{j\in A}(a_j+i b_j)  \left< \theta'_j(x_s)+i\theta''_j(x), \phi_i (x_s)  \right>^5$$
\begin{equation}\label{eq:muInnerComplex} +
\sum_{\substack{S \subseteq A \\ |S| \geq 2}} \delta^{|S|} \prod_{j \in S} (a_j+i b_j) \left<\theta'_j(x_s)+i\theta''_j(x_s), \phi_i(x_s) \right>^5.
\end{equation}
It follows from \eqref{eq:muL2Complex} that, for sufficiently small $\delta$, $\sum_{\substack{S \subseteq A \\ |S| \geq 2}} \delta^{|S|} \left|\left<\theta'_j(x_s),\phi_i(x_s) \right>\right|^2 \leq C^{8}\delta^2$, and similarly with $\theta'_j(x_s)$ replaced by $\theta''_j(x_s)$. Combining this with the trivial estimate
$$\left< \theta'_j(x_s)+i\theta''_j(x), \phi_i (x_s)  \right>^5 \leq 2^5\left| \left< \theta'_j(x_s), \phi_i (x_s)  \right>\right|^5 +\left|\left< \theta'_j(x_s), \phi_i (x_s)  \right>\right|^5$$
allows us to estimate the second term on the right of \eqref{eq:muInnerComplex} as
$$\sum_{\substack{S \subseteq A \\ |S| \geq 2}} \delta^{|S|} \prod_{j \in S} (a_j+i b_j) \left<\theta'_j(x_s)+i\theta''_j(x_s), \phi_i(x_s) \right>^5 \lesssim  C^{8} \delta^2.$$
We now consider the first term on the right side of \eqref{eq:muInnerComplex}. By orthogonality we have that
\begin{equation}\label{eq:prodOrthComplex}
 \left|\left<\theta'_j(x_s)+i\theta''_j(x_s), \phi_i (x_s)  \right>\right|^5 \leq  \left|\left<\theta'_j(x_s)+i\theta''_j(x_s), \phi_i (x_s)  \right>\right|^2 \left(\left<\phi_j, \phi_i \right>+ 2 \epsilon \right)^{3}.
\end{equation}
Hence, if $i \notin A$ we have
$$\left| \sum_{j\in A}(a_j+i b_j)  \left< \theta'_j(x_s)+i\theta''_j(x), \phi_i (x_s)  \right>^5 \right| \lesssim C^4 \epsilon^3. $$
On the other hand, if $i \in A$,
$$\left| \sum_{j\in A}(a_j+i b_j)\left<\theta'_j(x_s)+i\theta''_j(x), \phi_i (x_s) \right>^5  -(a_i+i b_i)\left<\theta'_i(x_s)+i\theta''_i(x) ,\phi_i (x_s)\right>^5\right| \lesssim C^4 \epsilon^3. $$
Recalling that $(a_j+i b_j)=\text{sign}(\alpha_j)$ and letting $c_1,c_2,\ldots$ denote universal constants, using the expansion given in (\ref{eq:muInnerComplex}) we have that
$$\left< \sum_{i=1 }^{n}  \alpha_i \Phi_i, \mu_{(\delta)} \right> $$
$$\geq \delta \sum_{j \in A}|\alpha_j| \left<\theta_j,\phi_j \right>^5 - c_1\delta\left( \sum_{j\in A}|\alpha_j| \right) \epsilon^3 - c_1\delta \left(\sum_{j \notin A} |\alpha_j| \right)\epsilon^3 - c_1\delta^2  \sum_{j=1 }^{n} |\alpha_j|. $$
Using \eqref{eq:thetaphiIP}, which also holds in the complex case, provided $\epsilon$ is sufficiently small this gives that
$$\left<\sum_{i=1}^{n} a_i \Phi_i, \mu_{(\delta)} \right>  \geq \delta \left(\frac{1}{2} \sum_{i \in A} |\alpha_i| - c_2 C^4 \epsilon^3 \sum_{i=2}^{n}|\alpha_i| - c_2 C^{8}\delta \sum_{i=1}^{n}|\alpha_i| \right). $$
Recalling \eqref{eq:RTSsize}, we have that the quantity above is
$$\geq \delta \left(c_3 C^2 \epsilon^2 \left( \log \frac{C}{\epsilon} \right)^{-1} -  c_4 C^4 \epsilon^3 - c_4 C^{8}\delta \right) \sum_{j=1 }^{n}  |\alpha_j|.$$
Again, an appropriate choice of $\delta$ and $\epsilon$ completes the proof.
\end{proof}
\section{Tensor-Sidon implies Rademacher-Sidon}
The purpose of this section is to prove Theorem \ref{thm:tensorImplies}, namely:
\begin{proposition}Let $\{\phi_i\}$ denote a complex OS uniformly bounded by $M$ such that the $k$-fold tensored system $\{\otimes_{s=1}^{k} \phi_{s} \}$ is Rademacher-Sidon. Then $\{\phi_i\}$ has the Rademacher-Sidon property.
\end{proposition}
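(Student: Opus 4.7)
The plan is to establish the upper-bound comparison
\[
\mathbb{E}_r \bigl\|\textstyle\sum_i r_i a_i \Phi_i\bigr\|_{L^\infty(\Omega^k)} \;\lesssim\; C(M,k)\, \mathbb{E}_r \bigl\|\textstyle\sum_i r_i a_i \phi_i\bigr\|_{L^\infty(\Omega)}, \tag{$\star$}
\]
valid for every finitely-supported coefficient sequence $(a_i)$. Once $(\star)$ is in hand, combining it with the hypothesized tensor Rademacher--Sidon lower bound $\tilde{\gamma}\sum|a_i|$ on the left-hand side immediately produces the Rademacher--Sidon property for $\{\phi_i\}$ itself, with constant $\tilde{\gamma}/C(M,k)$.

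To prove $(\star)$, view both sides as $\mathbb{E}\sup$ of Bernoulli processes: on the tensor side, $X^{(k)}_t = \sum_i r_i\, c_i^{(k)}(t)$ indexed by $t=(x_1,\ldots,x_k)\in\Omega^k$ with $c_i^{(k)}(t)=a_i\prod_s \phi_i(x_s)$; on the single-variable side, $X^{(1)}_s = \sum_i r_i\, c_i^{(1)}(s)$ with $c_i^{(1)}(s)=a_i\phi_i(s)$. I would invoke the same machinery that appears in Section~\ref{sec:5foldComplex}: Talagrand's majorizing measure theorem \cite{TalagrandMM} and the Bednorz--Lata\l{}a characterization of bounded Bernoulli processes \cite{BL}. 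The first main step is to dominate the tensor $L^2$-metric by an $\ell^2$-sum of single-variable metrics. The telescoping identity
\[
\prod_{s}\phi_i(x_s) - \prod_{s}\phi_i(x'_s) = \sum_{j=1}^{k} \Bigl(\prod_{s<j}\phi_i(x_s)\Bigr)\bigl(\phi_i(x_j)-\phi_i(x'_j)\bigr)\Bigl(\prod_{s>j}\phi_i(x'_s)\Bigr),
\]
combined with $|\phi_i|\leq M$ and Cauchy--Schwarz, yields $d_2^{(k)}(t,t')^2 \leq k\,M^{2(k-1)} \sum_j d_2^{(1)}(x_j,x'_j)^2$. The standard sub-additivity $\gamma_2(\Omega^k,\oplus_j d_2^{(1)}) \lesssim k\,\gamma_2(\Omega,d_2^{(1)})$ of the $\gamma_2$ functional on product metric spaces, together with the Talagrand upper bound for Bernoulli processes (via Gaussian domination), then gives
\[
\mathbb{E}_r \|\textstyle\sum r_i a_i \Phi_i\|_\infty \;\lesssim\; k^{3/2}M^{k-1}\,\gamma_2(\Omega, d_2^{(1)}).
\]

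The remaining and principal obstacle is to bound $\gamma_2(\Omega,d_2^{(1)})$ by the single-variable Rademacher--Sidon quantity $R:=\mathbb{E}_r\|\sum r_i a_i \phi_i\|_\infty$. The Bednorz--Lata\l{}a characterization gives $R \asymp \inf_{c^{(1)}=c'+c''}\bigl(\gamma_2(\Omega,d_{c'}) + \sup_s\|c''(s)\|_1\bigr)$, so a near-optimal decomposition may carry a nontrivial $\ell^1$ part that in principle prevents a clean lower bound $\gamma_2(\Omega,d_2^{(1)})\lesssim R$. To handle this, I would exploit $|\phi_i|\leq M$ by splitting $(a_i)$ into ``large'' and ``small'' parts at a carefully chosen threshold: the large part contains only finitely many indices and is treated directly via Kahane's contraction principle together with the triangle inequality, while the small part is controlled by Talagrand/Bednorz--Lata\l{}a applied to a truncated system whose $\ell^1$ contribution is provably dominated by the $\gamma_2$ term, giving $\gamma_2(\Omega,d_2^{(1)})\lesssim C'(M)\, R$. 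Assembling these estimates completes $(\star)$ and hence the proposition.
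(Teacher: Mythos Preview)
Your overall plan---prove an upper-bound comparison between the tensor and single-variable processes and then invoke the tensor hypothesis---matches the paper's, and your telescoping metric estimate is precisely the lemma the paper isolates. But the paper's route is considerably lighter: it never touches $\gamma_2$ or Bednorz--Lata\l{}a. It simply passes to Gaussians (via $\mathbb{E}_r\|\cdot\|\le\sqrt{\pi/2}\,\mathbb{E}_g\|\cdot\|$), applies the complex Slepian comparison (Proposition~\ref{prop:Cslepian}) directly to the metric inequality, and obtains
\[
\mathbb{E}_g\Bigl\|\sum_i g_i a_i\Phi_i\Bigr\|_{L^\infty(\Omega^k)}\;\lesssim_{M,k}\;\mathbb{E}_g\Bigl\|\sum_i g_i a_i\phi_i\Bigr\|_{L^\infty(\Omega)}.
\]
This is the same intermediate point your steps 1--3 reach (since $\gamma_2(\Omega,d_2^{(1)})$ is comparable to the single-variable Gaussian expectation), only with far less machinery.

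The genuine gap is your step 4. You claim that uniform boundedness alone forces $\gamma_2(\Omega,d_2^{(1)})\lesssim C'(M)\,R$, i.e.\ single Gaussian $\lesssim C'(M)\cdot$ single Rademacher, \emph{unconditionally} in the coefficients. Your splitting sketch does not establish this: the ``small-coefficient'' part can still carry the entire $\ell^1$ mass in the Bednorz--Lata\l{}a decomposition, and nothing you wrote forces that mass into the $\gamma_2$ term. Indeed, for bounded but non-orthonormal systems the inequality is false (take $\phi_i=1_{\{i\}}$ on $[n]$, $a_i=1$: then $R=1$ while the Gaussian expectation is $\asymp\sqrt{\log n}$), and your argument never invokes orthonormality at this step. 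Whether the inequality holds for genuine orthonormal systems is a separate question that your sketch does not settle.

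The fix---and this is exactly what the paper does---is to abandon the unconditional $(\star)$ and bring the hypothesis in earlier. From tensor Rademacher--Sidon you already know $\sum_i|a_i|\lesssim \mathbb{E}_r\|\sum r_ia_i\Phi_i\|_\infty$, and your steps 1--3 (or Slepian) bound the latter by $G^{(1)}:=\mathbb{E}_g\|\sum g_ia_i\phi_i\|_\infty$. Now truncate the Gaussians at level $L$ and use contraction:
\[
G^{(1)}\;\le\; L\,R \;+\; M\Bigl(\sum_i|a_i|\Bigr)\,\mathbb{E}\bigl[|g|\,1_{|g|>L}\bigr]\;\le\;L\,R\;+\;CMe^{-L^2/4}\sum_i|a_i|.
\]
Choosing $L$ so that the second term is absorbed by the lower bound $\sum|a_i|\lesssim G^{(1)}$ yields $R\gtrsim_{M,k}\sum|a_i|$. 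The paper also notes that, alternatively, one may apply Proposition~\ref{prop:main} with the Gaussians themselves playing the role of the $\psi_2$ system.
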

Let $k\geq 2$.  If $\{\otimes_{i=1}^{k}\phi_i\}$ is Rademacher-Sidon we have that
$$ \int \left| \left| \sum_{i=1}^{n} a_i g_i(\omega) \prod_{i=1}^{k} \phi_i(x_i)  \right| \right|_{L^{\infty}(\tilde{\Omega})}d\omega \geq c \sum_{i=1 }^{n} |a_i|.$$
We then claim that
$$   \int  \left| \left| \sum_{i=1}^{n} a_i g_i(\omega) \phi_i(x)  \right| \right|_{L^{\infty}(\Omega)}d\omega \gtrsim  \int \left| \left| \sum_{i=1}^{n} a_i g_i(\omega) \prod_{i=1}^{k} \phi_i(x)  \right| \right|_{L^{\infty}(\tilde{\Omega})} d \omega. $$
Recognizing that each side can be interpreted as the expectation of the supremum of a Gaussian processs, this inequality follows from the complex version of Slepian's comparison lemma (see Proposition \ref{prop:Cslepian} in the appendix) once one has established the following lemma.
\begin{lemma}In the notation above we have
$$\left(\sum_{i=1}^{n}|a_i|^2 \left| \prod_{s=1}^{k}\phi_i(x_s) - \prod_{s=1}^{k}\phi_i(x_s')   \right| \right)^{1/2} \leq \sqrt{k}\left(\sum_{s=1}^{k}\sum_{i=1}^{n}|a_i|^2 \left|\phi_i(x_s) - \phi_i(x_s') \right|^2 \right)^{1/2}.  $$
\end{lemma}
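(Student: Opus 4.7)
The plan is to prove this by a straightforward telescoping argument, using uniform boundedness of the $\phi_i$ to control the ``other factors'' in the product. I read the left-hand side as having $|\cdot|^2$ under the square root (the displayed $|\cdot|$ appears to be a typographical slip, since otherwise the two sides are not of the same order of homogeneity in $\phi$).

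First, for each fixed $i$, I would invoke the telescoping identity
$$\prod_{s=1}^{k}\phi_i(x_s) - \prod_{s=1}^{k}\phi_i(x_s') \;=\; \sum_{s=1}^{k}\Bigl(\prod_{s'<s}\phi_i(x_{s'})\Bigr)\bigl(\phi_i(x_s)-\phi_i(x_s')\bigr)\Bigl(\prod_{s'>s}\phi_i(x_{s'}')\Bigr).$$
Since $\|\phi_i\|_\infty\le M$, the two product factors in each summand are bounded by $M^{k-1}$, and the triangle inequality gives
$$\Bigl|\prod_{s}\phi_i(x_s)-\prod_{s}\phi_i(x_s')\Bigr| \;\le\; M^{k-1}\sum_{s=1}^{k}|\phi_i(x_s)-\phi_i(x_s')|.$$
Squaring and applying the Cauchy--Schwarz inequality in the $s$-variable then yields
$$\Bigl|\prod_{s}\phi_i(x_s)-\prod_{s}\phi_i(x_s')\Bigr|^2 \;\le\; k\, M^{2(k-1)}\sum_{s=1}^{k}|\phi_i(x_s)-\phi_i(x_s')|^2.$$

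Second, I would multiply by $|a_i|^2$, sum over $i\in[n]$, interchange the (finite) summations over $i$ and $s$, and take a square root. This delivers exactly the stated inequality (with an extra factor $M^{k-1}$, which the authors are presumably absorbing either into the constant of Slepian's lemma or into the normalization $M=1$).

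There is no real obstacle here: the key structural step is the telescoping identity, and the factor $\sqrt{k}$ on the right-hand side comes precisely from Cauchy--Schwarz applied to the $k$-term telescoping sum. The uniform boundedness hypothesis $\|\phi_i\|_\infty\le M$ is used in exactly the minimal way needed to bound the ``unperturbed'' factors in each telescoping term.
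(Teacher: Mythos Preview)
Your proof is correct and essentially coincides with the paper's: the paper also uses the telescoping bound $\bigl|\prod_s a_s - \prod_s b_s\bigr| \le \sum_s |a_s - b_s|$ (stated for numbers of modulus at most $1$, so the $M^{k-1}$ factor appears exactly as you note) followed by Cauchy--Schwarz in $s$ to produce the $\sqrt{k}$. The only cosmetic difference is that the paper first applies the $\ell^2(i)$ triangle inequality to pull the $s$-sum outside the square root and then Cauchy--Schwarz, whereas you apply Cauchy--Schwarz pointwise in $i$ before summing; both orderings yield the same bound, and your reading of the missing square on the left-hand side and the suppressed $M^{k-1}$ matches what the paper actually proves.
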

\begin{proof}Using the elementary inequality for complex numbers of modulus at most $1$, $\left|\prod_{i=1}^{k}a_i -  \prod_{i=1}^{k}b_i \right| \leq \sum_{i=1}^k |a_i - b_i|$, we have that
$$\left(\sum_{i=1}^{n}|a_i|^2 \left| \prod_{s=1}^{k}\phi_i(x_s) - \prod_{s=1}^{k}\phi_i(x_s')   \right| \right)^{1/2} \leq M^{k-1} \sum_{s=1}^{k} \left( \sum_{i=1}^{n}|a_i|^2 \left|\phi_i(x_s) - \phi_i(x_s') \right|^2 \right)^{1/2}.$$
An application of the Cauchy-Schwarz inequality shows the inequality above is
$$\leq\sqrt{k}\left(\sum_{s=1}^{k}\sum_{i=1}^{n}|a_i|^2 \left|\phi_i(x_s) - \phi_i(x_s') \right|^2 \right)^{1/2}.$$
\end{proof}It follows that
$$ \int \left| \left| \sum_{i=1}^{n} a_i g_i(\omega) \phi_i  \right| \right|_{L^{\infty}(\Omega)} d\omega \gtrsim_{M} c \sum_{i=1 }^{n} |a_i|. $$
One can replace the Gaussian random variables with Rademacher functions using a truncation argument (and the contraction principle), in a similar manner to the argument given in Section \ref{sec:unbounded}. Alternatively, one may apply Proposition \ref{prop:main}. This completes the proof.
\section{$\psi_2$ averages: Theorem \ref{thm:RadMajor}}\label{sec:stoch2}
In this section we present an alternate approach to Proposition \ref{prop:main} based on more sophisticated tools from the theory of stochastic processes. In order to state these results we recall some notation. Let $X$ denote a metric space with distance $d(t,s)$. Given a subset $\mathcal{E} \subseteq X$, we denote Talagrand's functional $\tau(\mathcal{E},d)$. We refer the reader to Chapter 2 of \cite{Tulb} (in particular Definition 2.2.19), where this quantity is denoted $\gamma_{2}(T,d)$, for a discussion and definition of this quantity. Moreover, we say that a stochastic process $X_{t}$  indexed by a subset of a metric space $\mathcal{E} \subseteq X$ is centered if $\int X_{t} d\mu =0$ for each $t \in \mathcal{E}$, and is subgaussian (with constant $C>0$) if it satisfies the inequality
\begin{equation}\label{eq:subGauss}
 \mu\left( |X_{t}-X_{s}| \geq \lambda \right) \leq C e\left( -\frac{\lambda^2}{C d(t,s)^2}\right).
\end{equation}
We may now recall Preston's theorem (see Theorem 3, in \cite{Preston}). A discussion/proof of the fact that the functional used in the statement of Theorem 3 of \cite{Preston} is equivalent to Talagrand's functional as defined in \cite{Tulb} can be found in \cite{Tmmwm}. Also note that in the centered case this result is presented as Theorem 2.2.18 in \cite{Tulb}.
\begin{proposition}\label{prop:Preston}Let $X_t$ be a subgaussian real-valued process indexed by elements of a metric space $X$ with distance $d(t,s)$. Then
$$\int \sup_{t \in T} |X_{t}| d\mu \lesssim C \tau(\mathcal{E},d).$$
\end{proposition}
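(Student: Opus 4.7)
The plan is to recover Preston's theorem through Talagrand's generic chaining machinery. First one invokes the equivalence (cited in \cite{Tmmwm}) between the majorizing-measure functional $\tau(\mathcal{E},d)$ and the $\gamma_2$ functional
$$\gamma_2(\mathcal{E},d) := \inf_{\{\mathcal{A}_n\}} \sup_{t \in \mathcal{E}} \sum_{n \geq 0} 2^{n/2} \Delta(A_n(t)),$$
where the infimum is over admissible sequences of partitions ($|\mathcal{A}_0|=1$ and $|\mathcal{A}_n| \leq 2^{2^n}$), $A_n(t)$ is the cell of $\mathcal{A}_n$ containing $t$, and $\Delta$ denotes diameter in $d$. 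This reduces the proposition to proving
$$\int \sup_{t \in \mathcal{E}} |X_t - X_{t_0}|\, d\mu \lesssim C\, \gamma_2(\mathcal{E},d)$$
for any basepoint $t_0 \in \mathcal{E}$; passage to $\int \sup_t |X_t|\, d\mu$ in the centered case is standard by bounding $\int |X_{t_0}|\,d\mu$ directly from the subgaussian tail \eqref{eq:subGauss}.

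Next, fix a near-optimal admissible sequence and pick a representative $\pi_n(t) \in A_n(t)$ for each $n$, so that $|\{\pi_n(t) : t \in \mathcal{E}\}| \leq 2^{2^n}$. Writing the telescoping decomposition
$$X_t - X_{\pi_0(t)} = \sum_{n \geq 1} \bigl( X_{\pi_n(t)} - X_{\pi_{n-1}(t)} \bigr),$$
the main estimate is a union bound at each level. There are at most $2^{2^n + 2^{n-1}} \leq 2^{2^{n+1}}$ pairs $(\pi_{n-1}(t), \pi_n(t))$ as $t$ varies, so \eqref{eq:subGauss} gives
$$\mu\!\left[\, \exists t: |X_{\pi_n(t)} - X_{\pi_{n-1}(t)}| > C'(u + 2^{n/2})\, \Delta(A_{n-1}(t))\, \right] \lesssim \exp(-u^2 \cdot 2^{n-1}),$$
after absorbing the combinatorial factor into the Gaussian exponent. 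Summing in $n$, integrating the resulting tail, and using $d(\pi_n(t), \pi_{n-1}(t)) \leq \Delta(A_{n-1}(t))$ yields
$$\int \sup_{t} |X_t - X_{\pi_0(t)}|\, d\mu \lesssim C \sup_{t} \sum_{n \geq 1} 2^{n/2} \Delta(A_{n-1}(t)) \lesssim C\, \gamma_2(\mathcal{E},d),$$
which by the preceding equivalence is exactly the claim. A measurability preamble, replacing $\mathcal{E}$ by a countable dense subset and taking monotone limits, takes care of the fact that the supremum inside the integral need not a priori be measurable.

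The main obstacle is twofold. First, the chaining must be calibrated so that the factor $2^{n/2}$ coming from the subgaussian union bound (with $2^{2^n}$ cells) matches \emph{exactly} the weight appearing in the definition of $\gamma_2$; this is what dictates the doubly-exponential growth condition on $|\mathcal{A}_n|$. Second, the passage from increments $|X_t - X_{t_0}|$ to the bare $|X_t|$ statement requires either a centering hypothesis or a uniform bound on $|X_{t_0}|$, since adding a large constant to the process does not change $\gamma_2$; this is why the cited theorem in \cite{Tulb} is phrased in the centered case. All remaining steps — verifying the equivalence of $\tau$ and $\gamma_2$, handling measurability, and carrying out the summable union bound — are by now standard in the generic-chaining literature.
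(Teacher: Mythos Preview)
The paper does not actually prove this proposition; it is quoted as a known result with citations to Preston's original paper \cite{Preston} and to Theorem 2.2.18 of \cite{Tulb} for the centered case, together with \cite{Tmmwm} for the equivalence between Preston's functional and $\gamma_2$. Your outline is the standard generic-chaining proof one finds in \cite{Tulb}, and it is correct in structure: pass to $\gamma_2$ via the cited equivalence, chain along an admissible sequence, and control each level by a subgaussian union bound calibrated so that the $2^{2^n}$ combinatorial factor is absorbed by the $2^{n/2}$ weight. So your approach is exactly what the paper defers to the references, and there is nothing to compare.

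One small quibble: the tail bound you record after the union bound, $\exp(-u^2\cdot 2^{n-1})$, is not quite what falls out of the computation with the threshold $C'(u+2^{n/2})\Delta(A_{n-1}(t))$; one typically gets $\exp(-c u^2)$ per level (which is already summable over $n$ after multiplying by a geometric factor), or one rescales the threshold to $u\cdot 2^{n/2}$ to get the $2^n$ in the exponent. Either calibration works and the conclusion is unchanged, but the displayed bound as written does not literally follow. You also correctly flag that the passage from increments to $\sup_t |X_t|$ requires centering (or a separate bound on $|X_{t_0}|$), which is why the paper points specifically to the centered statement in \cite{Tulb}.
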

On the other hand we have the following (see Lemma 3.2.6 in \cite{Tulb}) complex version of the Majorizing measure theorem:
\begin{proposition}\label{prop:complexMM}Let $X_{t}$ denote a complex-valued process such that $\Re X_{t}$ and $\Im X_{t}$ are Gaussian processes with respect to the metrics $d_{\Re}(s,t) = \left(\int | \Re X_s - \Re X_t |d\mu \right)^{1/2}$ and $d_{\Im}(s,t) = \left(\int | \Im X_s - \Im X_t |d\mu \right)^{1/2}$. Given the distance function $d(s,t) = \left(\int | X_s - X_t |d\mu \right)^{1/2}$, one has that
$$\tau(\mathcal{E},d) \lesssim \int \sup_{t \in \mathcal{E}} | X_t | d\mu.$$
\end{proposition}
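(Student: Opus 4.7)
The plan is to reduce to the classical real-valued majorizing measure theorem applied separately to the Gaussian processes $\Re X_t$ and $\Im X_t$, using a subadditivity property of Talagrand's functional $\tau$ to combine the resulting bounds.

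First, I would establish the metric comparison $d(s,t) \le d_{\Re}(s,t) + d_{\Im}(s,t)$ up to a universal constant. The pointwise bound $|X_s - X_t| \le |\Re X_s - \Re X_t| + |\Im X_s - \Im X_t|$ yields $d(s,t)^2 \le d_{\Re}(s,t)^2 + d_{\Im}(s,t)^2$ after integration, which implies the additive form. Next I would invoke the subadditivity of $\tau$: if $d \le d_1 + d_2$ on $\mathcal{E}$, then $\tau(\mathcal{E}, d) \lesssim \tau(\mathcal{E}, d_1) + \tau(\mathcal{E}, d_2)$. This follows directly from the partition-based definition of $\tau = \gamma_2$: for any admissible sequence of partitions $(\mathcal{A}_n)$ with $|\mathcal{A}_n| \le 2^{2^n}$, the $d$-diameter of each cell is at most the sum of its $d_1$- and $d_2$-diameters, so a common near-optimal admissible sequence may be used for both.

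With these two ingredients in hand, I would apply the classical real majorizing measure theorem to $\Re X_t$ and $\Im X_t$, both Gaussian processes by hypothesis, obtaining
$$\tau(\mathcal{E}, d_{\Re}) \lesssim \int \sup_{t \in \mathcal{E}} |\Re X_t|\, d\mu, \qquad \tau(\mathcal{E}, d_{\Im}) \lesssim \int \sup_{t \in \mathcal{E}} |\Im X_t|\, d\mu.$$
Combining these with the subadditivity step and the trivial pointwise bounds $|\Re X_t|, |\Im X_t| \le |X_t|$ gives
$$\tau(\mathcal{E}, d) \lesssim \int \sup_{t \in \mathcal{E}} |\Re X_t|\, d\mu + \int \sup_{t \in \mathcal{E}} |\Im X_t|\, d\mu \lesssim \int \sup_{t \in \mathcal{E}} |X_t|\, d\mu,$$
which is the desired inequality.

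The main obstacle is the subadditivity of $\tau$ under sums of metrics. Although intuitive, verifying it carefully requires working at the level of admissible partitions, where the triangle-like bound on diameters translates directly into an additive bound on $\tau$; this is one of the principal advantages of the partition (rather than measure) formulation of $\gamma_2$. A minor technical point is that the distance $d$ as defined uses $L^1$ rather than $L^2$ moments, but for Gaussian processes these are equivalent up to a universal constant, so the real majorizing measure theorem may be invoked in the form stated. The remaining ingredients are entirely standard.
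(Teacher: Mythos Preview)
The paper does not supply its own proof of this proposition; it simply cites Lemma 3.2.6 of Talagrand's book \cite{Tulb} as a black box. Your sketch---reducing the complex statement to two applications of the real majorizing measure theorem and gluing via subadditivity of $\gamma_2$---is the natural derivation and is essentially correct.

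One point deserves more care. Your justification of the subadditivity $\tau(\mathcal{E},d)\lesssim \tau(\mathcal{E},d_{\Re})+\tau(\mathcal{E},d_{\Im})$ says that ``a common near-optimal admissible sequence may be used for both,'' which is not quite how the argument runs: the near-optimal admissible sequences for $d_{\Re}$ and $d_{\Im}$ are a priori unrelated. The standard fix is to take near-optimal admissible sequences $(\mathcal{A}_n)$ for $d_{\Re}$ and $(\mathcal{B}_n)$ for $d_{\Im}$, form the common refinements $\mathcal{C}_n=\{A\cap B: A\in\mathcal{A}_n,\ B\in\mathcal{B}_n\}$, note that $|\mathcal{C}_n|\le 2^{2^{n+1}}$ so that after an index shift $(\mathcal{C}_n)$ is again admissible, and use $\mathrm{diam}_d(A\cap B)\le \mathrm{diam}_{d_{\Re}}(A)+\mathrm{diam}_{d_{\Im}}(B)$. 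With this correction the remainder of your argument (including the remark on $L^1$ versus $L^2$ increments for Gaussians) goes through.
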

Combining these results gives the following (a real-valued version of this inequality appears in the work of the first author \cite{BourPoly}):
\begin{corollary}\label{cor:gMajor}Let $\phi_{1},\phi_2,\ldots,\phi_n$ be a sequence of functions on a probability space $(\Omega,\mu)$ satisfying the $\psi_2(C)$ condition and let $\tilde{g}_1,\tilde{g}_2,\ldots,\tilde{g}_n$ denote a sequence of independent complex-valued Gaussian random variables. Furthermore, let $\mathcal{E} \subset \mathbb{C}^n$. Then,
$$ \int \sup_{\textbf{a} \in \mathcal{E} } \sum_{j=1}^{n} a_j \phi_j d\mu \lesssim C \int \sup_{\textbf{a} \in \mathcal{E} } \sum_{j=1}^{n} a_j \tilde{g}_j d\mu . $$
\end{corollary}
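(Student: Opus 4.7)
The plan is to sandwich both sides against Talagrand's functional $\tau(\mathcal{E},d)$ for a single natural metric $d$ on $\mathcal{E}$: Preston's theorem (Proposition~\ref{prop:Preston}) gives the upper bound for the $\psi_2$ process, while the complex majorizing measure theorem (Proposition~\ref{prop:complexMM}) gives the matching lower bound for the Gaussian process. The factor of $C$ will enter precisely once, through the $1$-homogeneity of $\tau$ in its metric argument.

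Concretely, I would introduce the two processes $X_{\mathbf{a}}(\omega):=\sum_{j=1}^n a_j\phi_j(\omega)$ and $Y_{\mathbf{a}}(\omega):=\sum_{j=1}^n a_j\tilde g_j(\omega)$, both indexed by $\mathbf{a}\in\mathcal{E}\subset\mathbb{C}^n$, and the natural metric $d(\mathbf{a},\mathbf{b}):=\bigl(\sum_j|a_j-b_j|^2\bigr)^{1/2}$. The $\psi_2(C)$ hypothesis immediately yields $\|X_{\mathbf{a}}-X_{\mathbf{b}}\|_{\psi_2}\le C\,d(\mathbf{a},\mathbf{b})$, whence the tail bound $\mu(|X_{\mathbf{a}}-X_{\mathbf{b}}|\ge \lambda)\lesssim \exp(-c\lambda^2/(C\,d(\mathbf{a},\mathbf{b}))^2)$. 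Since Preston's theorem as stated applies to real-valued processes, I would split $X_{\mathbf{a}}=\Re X_{\mathbf{a}}+i\,\Im X_{\mathbf{a}}$ and apply Proposition~\ref{prop:Preston} to each real component with the metric $C\,d$; combining via the triangle inequality and the identity $\tau(\mathcal{E},C\,d)=C\,\tau(\mathcal{E},d)$ gives
\begin{equation*}
\int \sup_{\mathbf{a}\in\mathcal{E}}\Bigl|\sum_{j=1}^n a_j\phi_j\Bigr|\,d\mu \;\lesssim\; C\,\tau(\mathcal{E},d).
\end{equation*}

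On the Gaussian side, $\int|Y_{\mathbf{a}}-Y_{\mathbf{b}}|^2\,d\mu\asymp \sum_j|a_j-b_j|^2=d(\mathbf{a},\mathbf{b})^2$ because the $\tilde g_j$ are independent and (appropriately) normalized. The real and imaginary parts $\Re Y_{\mathbf{a}}$ and $\Im Y_{\mathbf{a}}$ are genuine Gaussian processes with respect to $d_{\Re}$ and $d_{\Im}$ (both comparable to $d$), so Proposition~\ref{prop:complexMM} yields
\begin{equation*}
\tau(\mathcal{E},d)\;\lesssim\;\int \sup_{\mathbf{a}\in\mathcal{E}}\Bigl|\sum_{j=1}^n a_j\tilde g_j\Bigr|\,d\mu.
\end{equation*}
Chaining these two inequalities gives the corollary.

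The argument is essentially bookkeeping once the two black-box theorems are available; the only subtle points are (i) invoking Preston in the complex setting via a real/imaginary decomposition, rather than through a direct complex analog, and (ii) arranging the normalization of the comparison metric so that the constant from the $\psi_2$ inequality factors out exactly once through the $1$-homogeneity of $\tau(\mathcal{E},\,\cdot\,)$, producing the claimed $\lesssim C$ rather than $C^2$.
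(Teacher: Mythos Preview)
Your proposal is correct and follows essentially the same route as the paper: both arguments sandwich the two suprema against $\tau(\mathcal{E},d)$ by applying Preston's theorem to the real and imaginary parts of the $\psi_2$ process for the upper bound and the complex majorizing measure theorem for the lower bound. Your treatment is slightly more explicit about the metric normalization and the $1$-homogeneity of $\tau$, but the substance is identical.
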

\begin{proof}
For $t =(t_1,\ldots,t_n) \in \mathcal{E}$, define
$$ X_{t} = \sum_{i=1}^{n} t_i \phi_i.$$
It follows from the $\psi_2(C)$ condition on the functions $\phi_1,\phi_2,\ldots,\phi_n$ (see Lemma 16 from \cite{Lewko}) that the process $X_t$ satisfies \eqref{eq:subGauss}. It easily follows that the real and imaginary parts of the process $X_t$ are subgaussian processes with respect to the same distance $d(t,s)$. In other words
\begin{equation}\label{eq:subGaussRI} \mu\left( |\Re X_{t}-\Re X_{s}| \geq \lambda \right) \leq C e \left( -\frac{\lambda^2}{C d(t,s)^2}\right) \hspace{.4cm} \text{and} \hspace{.4cm}  \mu\left( |\Im X_{t}-\Im X_{s}| \geq \lambda \right) \leq C e \left( -\frac{\lambda^2}{C d(t,s)^2}\right).
\end{equation}
It then follows from Proposition \ref{prop:Preston} that
\begin{equation}\label{eq:PresUB}
 \int \sup_{t \in \mathcal{E} } X_{t} \leq \int \sup_{t \in \mathcal{E} } |\Re X_{t}|d\mu + \int \sup_{t \in \mathcal{E} } |\Im X_{t}|d\mu \lesssim C \tau(\mathcal{E},d).
 \end{equation}
On the other hand, from Proposition \ref{prop:complexMM} we have
\begin{equation}\label{eq:MMlower}
\tau(\mathcal{E},d) \lesssim  \int \sup_{\textbf{a} \in \mathcal{E} } \sum_{j=1}^{n} a_j \tilde{g}_j d\mu.
\end{equation}
Combining \eqref{eq:PresUB} and \eqref{eq:MMlower} completes the proof.
\end{proof}
We will also require the recent result of Bednorz and Lata\l{}a \cite{BL} characterizing bounded Bernoulli processes. Given a subset $\mathcal{E} \subseteq \mathbb{C}^n$ we define the Bernoulli process
\begin{equation}\label{eq:Bprocess}
B(\mathcal{E}) := \int \sup_{t \in \mathcal{E}}\sum_{j=1 }^{n}  t_j r_j(\omega) d\omega.
\end{equation}
Let $G(T)$ denote the associated complex Gaussian process. In other words $G(T)$ is defined to be the quantity \eqref{eq:Bprocess} with the the Rademacher functions replaced by independent normalized complex-valued Gaussians. The theorem of Bednorz and Lata\l{}a states the following.
\begin{theorem}\label{thm:BL}Given a set $\mathcal{E} \subseteq \mathbb{C}^n$ with $B(\mathcal{E})< \infty$, there exists a decomposition $\mathcal{E} \subseteq \mathcal{E}_{1} + \mathcal{E}_{2}$ such that
\begin{equation}\label{eq:ellBL}
\sup_{t \in \mathcal{E}_{1}} \sum_{j=1 }^{n}  |t_j| \lesssim B(\mathcal{E})
\end{equation}
\begin{equation}\label{eq:gBL}
G(\mathcal{E}_{2}) \lesssim  B(\mathcal{E})
\end{equation}
where the implied constants are universal.
\end{theorem}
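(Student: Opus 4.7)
The plan is to prove this via the multi-scale chaining strategy that resolves Talagrand's Bernoulli Conjecture. First I would reduce to the case where $\mathcal{E}$ is finite (or compact), since the statement is closed under pointwise approximation. The overall aim is to associate to each $t \in \mathcal{E}$ a canonical splitting $t = t^{(1)} + t^{(2)}$, where $t^{(1)}$ collects the ``peak'' coordinates (those $j$ for which $|t_j|$ is too large to be reasonably controlled by a Gaussian chaining at the appropriate scale) and $t^{(2)}$ is the ``small'' residue which is amenable to Gaussian control via generic chaining.

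The key step is to construct an admissible sequence of partitions $(\mathcal{A}_n)_{n \geq 0}$ of $\mathcal{E}$ with $|\mathcal{A}_n| \leq 2^{2^n}$, and associated ``centers'' $t_A \in A$ for each $A \in \mathcal{A}_n$, such that for each $t \in \mathcal{E}$ the telescoping chain $t = t_{A_0(t)} + \sum_{n\geq 0}(t_{A_{n+1}(t)} - t_{A_n(t)})$ has increments whose $\ell^2$ diameter decreases sufficiently fast. This is the standard framework of $\tau(\mathcal{E}, \ell^2)$. The construction of the partitions themselves is performed greedily, using the boundedness $B(\mathcal{E}) < \infty$ as a budget: at each stage one selects a subset of ``Bernoulli radius'' proportional to $2^{-n/2} B(\mathcal{E})$ that absorbs a large portion of the remaining mass, then iterates on the leftover.

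Once the chain is built, for each increment $\Delta_n(t) := t_{A_{n+1}(t)} - t_{A_n(t)}$ one performs coordinate-wise thresholding at level $2^{-n/2}$: coordinates exceeding this threshold are placed into the component that will ultimately form $\mathcal{E}_1$, while coordinates below threshold are placed into the component forming $\mathcal{E}_2$. The contribution to $\mathcal{E}_1$ is then bounded in $\ell^1$ by summing $|t_j|$ over the large coordinates across all scales; the boundedness of the original Bernoulli process forces this sum to be $\lesssim B(\mathcal{E})$ because a large coordinate at scale $n$ corresponds to a Bernoulli ``signal'' of size $\gtrsim 2^{-n/2}$ on a sizeable subpartition. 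The contribution to $\mathcal{E}_2$ is bounded by the $\gamma_2$-functional on the truncated chain, which by Talagrand's majorizing measure theorem translates into the Gaussian supremum $G(\mathcal{E}_2) \lesssim B(\mathcal{E})$.

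The hard part, and the reason this result was open for decades, is justifying that the thresholding actually produces valid $\ell^1$ and Gaussian-controlled pieces \emph{simultaneously}: the greedy partitioning has to be done so that the residuals after removing the peaks still admit a good chaining tree, and the peaks themselves, when summed across scales and across points $t$, do not overflow the $\ell^1$ budget. Balancing these two requirements requires the precise inductive selection of centers/radii and a careful accounting argument, for which I would follow the scheme of Bednorz and Lata\l{}a \cite{BL}.
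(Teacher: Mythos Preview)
The paper does not prove this theorem at all: it is quoted as the main result of Bednorz and Lata\l{}a \cite{BL}, and the only argument supplied is the one-line observation that the complex case follows from the real one by splitting into real and imaginary parts. Your proposal instead sketches the actual Bednorz--Lata\l{}a proof of the Bernoulli conjecture. As a high-level outline of that argument your description is broadly accurate --- admissible partitions, coordinate thresholding at scale $2^{-n/2}$, placing large coordinates into the $\ell^1$ piece and small ones into the Gaussian-controlled piece --- and you correctly flag that the delicate point is making the $\ell^1$ and $\gamma_2$ budgets close simultaneously. But as a proof it is only a plan: the inductive construction of the partitions and the accounting that makes both budgets work are precisely the content of \cite{BL}, and you defer to that paper for them. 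In the context of the present paper nothing more is expected, since the theorem is invoked as a black box; if you wish to match the paper you need only cite \cite{BL} and note the real/imaginary reduction.
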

Strictly speaking, Bednorz and Lata\l{}a state their result for real-valued processes however the complex version follows by considering real and imaginary parts.
Theorem \ref{thm:RadMajor} will follow from the following proposition by taking $\mathcal{E} := \{ \lambda_i y(x_i) : y \in X^{*}, ||y|| \leq 1  \}$ where $X^{*}$ is the dual space of $X$.
\begin{proposition}Let $\phi_1,\phi_2,\ldots,\phi_n$ be a $\psi_2(C)$ system uniformly bounded by $M$ and $\mathcal{E} \subseteq \mathbb{C}^n$. Then
\begin{equation}
\int \sup_{t \in \mathcal{E}} | \sum_{j=1 }^{n}  t_j \phi_j(\omega) |d\omega   \lesssim M C  \int \sup_{t \in \mathcal{E}} |\sum_{j=1 }^{n}  t_j r_j(\omega)| d\omega.
\end{equation}
\end{proposition}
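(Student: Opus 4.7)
The plan is to combine the Gaussian majoration given by Corollary \ref{cor:gMajor} with the Bednorz--Lata\l{}a decomposition (Theorem \ref{thm:BL}), using the latter to separate out the part of $\mathcal{E}$ on which we must pay an $\ell^1$-type price (handled trivially by the $L^\infty$ bound $M$) from the part whose Gaussian process can be estimated by the Bernoulli one (handled by Corollary \ref{cor:gMajor} at a cost of $C$).

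More precisely, let $B(\mathcal{E}) = \int \sup_{t\in\mathcal{E}}|\sum_j t_j r_j(\omega)|\,d\omega$ denote the right-hand side. I would apply Theorem \ref{thm:BL} to $\mathcal{E}$ (after, if needed, replacing $\mathcal{E}$ by $\mathcal{E}\cup(-\mathcal{E})$ to ensure symmetry so that the suprema of absolute values coincide with the Bernoulli process $B(\mathcal{E})$ up to a universal constant) to write $\mathcal{E}\subseteq \mathcal{E}_1+\mathcal{E}_2$ with $\sup_{t\in\mathcal{E}_1}\sum_j |t_j|\lesssim B(\mathcal{E})$ and $G(\mathcal{E}_2)\lesssim B(\mathcal{E})$. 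Since every $t\in\mathcal{E}$ splits as $t=t^{(1)}+t^{(2)}$ with $t^{(i)}\in\mathcal{E}_i$, the triangle inequality gives
\[
\sup_{t\in\mathcal{E}}\Bigl|\sum_{j=1}^n t_j\phi_j(\omega)\Bigr|\;\leq\;\sup_{t^{(1)}\in\mathcal{E}_1}\Bigl|\sum_{j=1}^n t^{(1)}_j\phi_j(\omega)\Bigr|+\sup_{t^{(2)}\in\mathcal{E}_2}\Bigl|\sum_{j=1}^n t^{(2)}_j\phi_j(\omega)\Bigr|.
\]

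For the first term I would use the uniform bound $\|\phi_j\|_\infty\leq M$ pointwise: $|\sum_j t^{(1)}_j\phi_j(\omega)|\leq M\sum_j |t^{(1)}_j|\leq M\cdot B(\mathcal{E})$, so integrating in $\omega$ contributes $M\cdot B(\mathcal{E})$. For the second term I would apply Corollary \ref{cor:gMajor} with $\mathcal{E}$ replaced by $\mathcal{E}_2$, which yields
\[
\int \sup_{t\in\mathcal{E}_2}\Bigl|\sum_{j=1}^n t_j\phi_j(\omega)\Bigr|\,d\omega\;\lesssim\; C\cdot G(\mathcal{E}_2)\;\lesssim\; C\cdot B(\mathcal{E}).
\]
Adding the two estimates gives $\int\sup_{t\in\mathcal{E}}|\sum_j t_j\phi_j|\,d\omega\lesssim (M+C)B(\mathcal{E})\lesssim MC\cdot B(\mathcal{E})$, which is the desired conclusion.

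The main technical nuisance, rather than a true obstacle, is ensuring that the absolute values in the statement are legitimately handled by Corollary \ref{cor:gMajor} (whose statement omits them) and by Theorem \ref{thm:BL} (whose Bernoulli process is defined without absolute values). Both issues are resolved by the symmetrization $\mathcal{E}\leadsto\mathcal{E}\cup(-\mathcal{E})$ in the real case, and by the rotational symmetrization $\mathcal{E}\leadsto\bigcup_{|z|=1} z\mathcal{E}$ (or just $\mathcal{E}\cup i\mathcal{E}\cup(-\mathcal{E})\cup(-i\mathcal{E})$ together with real/imaginary splitting) in the complex case; these alter the suprema only by universal constants, so they are absorbed into the $\lesssim$ notation. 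Once this bookkeeping is made precise, the argument above delivers the stated $MC$-bound cleanly, and Theorem \ref{thm:RadMajor} follows by specializing to $\mathcal{E}=\{(\lambda_j y(x_j))_{j=1}^n : y\in X^\ast,\|y\|\leq 1\}$ as indicated in the paragraph preceding the proposition.
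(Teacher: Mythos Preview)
Your proposal is correct and follows essentially the same route as the paper: apply the Bednorz--Lata\l{}a decomposition, bound the $\ell^1$-controlled piece trivially via $\|\phi_j\|_\infty\le M$, and bound the Gaussian-controlled piece via Corollary~\ref{cor:gMajor}. Your version is in fact slightly more careful than the paper's (you have the roles of $\mathcal{E}_1$ and $\mathcal{E}_2$ matched correctly to the conclusions of Theorem~\ref{thm:BL}, and you address the absolute-value/symmetrization bookkeeping that the paper leaves implicit).
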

\begin{proof}Let $\mathcal{E}_1$ and $\mathcal{E}_2$ be as given in Theorem \ref{thm:BL}. We have that
$$\int \sup_{t \in \mathcal{E}} | \sum_{j=1 }^{n}  t_j \phi_j(\omega) |d\omega \leq \int \sup_{t \in \mathcal{E}_1} | \sum_{j=1 }^{n}  t_j \phi_j(\omega) |d\omega + \int \sup_{t \in \mathcal{E}_2} | \sum_{j=1 }^{n}  t_j \phi_j(\omega) |d\omega. $$
Applying Corollary \ref{cor:gMajor} and then Theorem \ref{thm:BL} we may bound the above quantity as
$$\lesssim C\int \sup_{t \in \mathcal{E}_1} |\sum_{j=1 }^{n}  t_j \tilde{g}_j(\omega) |d\omega + \sup_{t \in \mathcal{E}_2} \sum_{j=1 }^{n}  |t_j| \lesssim M C  \int \sup_{t \in \mathcal{E}} |\sum_{j=1 }^{n}  t_j r_j(\omega)| d\omega.$$
This completes the proof.
\end{proof}

\section{A Counterexample: Theorem \ref{thm:CE}}\label{sec:Example}
The purpose of this section is to prove Theorem \ref{thm:CE}. We start with the following elementary fact:

\begin{lemma}\label{lem:knpIn}Let $10 < n,p$ be positive real numbers. Then
$$\sqrt{\log n} n^{-1/p} \leq \sqrt{p}.$$
\end{lemma}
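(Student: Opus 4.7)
The plan is to reduce the inequality to the elementary bound $u e^{-u} \le 1/e$ for $u \ge 0$ via a direct substitution. Squaring both sides, the claim is equivalent to
$$ \log n \cdot n^{-2/p} \le p. $$
I would introduce the substitution $u := 2\log n / p$, so that $\log n = up/2$ and $n^{-2/p} = e^{-u}$. After this change of variables, the inequality to verify becomes
$$ \frac{up}{2} e^{-u} \le p, $$
which (dividing by $p > 0$) is simply $u e^{-u} \le 2$.

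The standard one-variable calculus fact is that the function $f(u) = u e^{-u}$ attains its maximum on $[0,\infty)$ at $u=1$, where $f(1) = 1/e < 2$. Since $u \ge 0$ under our hypotheses ($n > 10$ ensures $\log n > 0$, and $p > 10 > 0$), the desired bound follows immediately. Unwinding the substitution recovers $\sqrt{\log n}\, n^{-1/p} \le \sqrt{p}$, completing the proof.

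There is no real obstacle here: the only subtlety is choosing a substitution that simultaneously linearizes the $\log n$ factor and the exponential $n^{-1/p}$ in terms of the same variable. The hypothesis $n, p > 10$ is far stronger than needed (the argument works for any $n \ge 1$, $p > 0$) and is presumably included only to avoid degenerate cases when the lemma is invoked later in the paper.
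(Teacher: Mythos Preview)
Your proof is correct and is essentially the same one-variable calculus argument as the paper's: the paper rewrites the claim as $p\,n^{2/p}\ge \log n$, takes logarithms, and minimizes $\log p + \tfrac{2}{p}\log n$ over $p$, finding the critical point $p=2\log n$ --- which is exactly your $u=1$ under the substitution $u=2\log n/p$. Your packaging via $ue^{-u}\le 1/e$ is arguably cleaner, and your remark that the hypothesis $n,p>10$ is far stronger than needed is accurate.
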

\begin{proof}The claim is equivalent to $\sqrt{p} n^{1/p} \geq \sqrt{\log n} $, or $ p n^{2/p} \geq \log n.$
Taking logarithms, this inequality is equivalent to $\log p + \frac{2}{p} \log n \geq \log \log n$. For a fixed $n$, the minimum of the left hand side occurs when $\frac{1}{p} - \frac{2}{p^2} \log n =0$, or $p = 2 \log n$. Thus we have
$$\log p + \frac{2}{p} \log n \geq \log \log n + \log 2 + 1 \geq \log \log n,$$
which establishes the claim.
\end{proof}
Next we estimate the $\Lambda(p)$ constant of the first $n$ elements of the Walsh system. Here, as above, $W_i$ denotes the $i$-th Walsh function on the unit unit interval $[0,1]$, which we'll denote as $\Omega_{1}$, in the standard (Paley) ordering.
\begin{lemma}\label{lem:HY}In the notation above, we have that
$$\frac{\sqrt{\log n}}{\sqrt{n}} ||\sum_{i=1}^{n} a_i W_i ||_{p} \lesssim \sqrt{p} \left( \sum_{i=1}^{n}|a_i|^2 \right)^{1/2}. $$
\end{lemma}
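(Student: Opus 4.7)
The plan is to combine the trivial $L^{\infty}$ and $L^{2}$ bounds on Walsh sums with log-convexity of $L^{p}$ norms, and then finish via Lemma \ref{lem:knpIn}.

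First, since each Walsh function satisfies $|W_{i}| \leq 1$, the Cauchy--Schwarz inequality gives
\[
\Bigl\| \sum_{i=1}^{n} a_{i} W_{i} \Bigr\|_{\infty} \leq \sum_{i=1}^{n} |a_{i}| \leq \sqrt{n} \Bigl( \sum_{i=1}^{n} |a_{i}|^{2} \Bigr)^{1/2}.
\]
Meanwhile, orthonormality of the Walsh system yields the exact identity
\[
\Bigl\| \sum_{i=1}^{n} a_{i} W_{i} \Bigr\|_{2} = \Bigl( \sum_{i=1}^{n} |a_{i}|^{2} \Bigr)^{1/2}.
\]

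Next, for any $2 \leq p \leq \infty$, log-convexity of the $L^{p}$ norms (i.e. the interpolation $\|f\|_{p} \leq \|f\|_{2}^{2/p} \|f\|_{\infty}^{1-2/p}$) combined with the two bounds above gives
\[
\Bigl\| \sum_{i=1}^{n} a_{i} W_{i} \Bigr\|_{p} \leq \Bigl( \sum_{i=1}^{n} |a_{i}|^{2} \Bigr)^{1/p} \cdot \Bigl( \sqrt{n}\bigl( \sum_{i=1}^{n} |a_{i}|^{2} \bigr)^{1/2} \Bigr)^{1-2/p} = n^{\,1/2 - 1/p} \Bigl( \sum_{i=1}^{n} |a_{i}|^{2} \Bigr)^{1/2}.
\]
Multiplying both sides by $\sqrt{\log n}/\sqrt{n}$ gives
\[
\frac{\sqrt{\log n}}{\sqrt{n}} \Bigl\| \sum_{i=1}^{n} a_{i} W_{i} \Bigr\|_{p} \leq \sqrt{\log n}\, n^{-1/p} \Bigl( \sum_{i=1}^{n} |a_{i}|^{2} \Bigr)^{1/2},
\]
and an application of Lemma \ref{lem:knpIn}, which bounds $\sqrt{\log n}\, n^{-1/p}$ by $\sqrt{p}$, finishes the case $p \geq 2$. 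For $1 \leq p \leq 2$ one instead just uses $\|\cdot\|_{p} \leq \|\cdot\|_{2}$ together with the trivial bound $\sqrt{\log n}/\sqrt{n} \lesssim 1 \lesssim \sqrt{p}$ (valid for $n$ sufficiently large, which is already assumed).

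There is no real obstacle here; the only substantive input is the convexity interpolation between $L^{2}$ and $L^{\infty}$, and all the asymptotic cleverness has already been packaged into the preceding lemma. The purpose of the factor $\sqrt{\log n}/\sqrt{n}$ in the statement is precisely to convert the $L^{\infty}$ loss of $\sqrt{n}$ into the Rudin-type growth $\sqrt{p}$ via Lemma \ref{lem:knpIn}, so the two results fit together with no additional effort.
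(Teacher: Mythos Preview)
Your proof is correct and essentially identical to the paper's. The paper phrases the interpolation step as ``by H\"older's inequality'' to obtain $\|\sum a_i W_i\|_p \le n^{1/p'-1/2}(\sum|a_i|^2)^{1/2} = n^{1/2-1/p}(\sum|a_i|^2)^{1/2}$, which is precisely your $L^2$--$L^\infty$ log-convexity bound, and then applies Lemma~\ref{lem:knpIn} exactly as you do.
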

\begin{proof}By the H\"{o}lder's inequality we have that
$$\frac{\sqrt{\log n}}{\sqrt{n}}  ||\sum_{i=1}^{n} a_i W_i ||_{p} \leq \frac{\sqrt{\log n}}{\sqrt{n}} n^{1/p' - 1/2} (\sum_{i=1}^{n}|a_i|^2 )^{1/2} $$
$$\leq \sqrt{\log n} n^{-1/p} \left(\sum_{i=1}^{n} |a_i|^{2} \right)^{1/2}. $$
Applying Lemma \ref{lem:knpIn} completes the proof.
\end{proof}
For a fixed large $n$, let $\sigma_i \in \{-1,+1\}$ be chosen such that
\begin{equation}\label{eq:RSin}
 || \sum_{i=1}^{n} \sigma_i W_i ||_{L^{\infty}(\Omega_{1})} \leq 6 \sqrt{n}.
\end{equation}
In other words, $\sum_{i=1}^{n} \sigma_i W_i$ is a Walsh Rudin-Shapiro polynomial. The existence of the coefficients $\sigma_i$ is guaranteed, for instance, by Spencer's ``six standard deviations suffice" theorem (see the use of Theorem 1 in section 5 of \cite{Spencer}). Alternately, one can take $\{W_i\}_{n=1}^{N}$ to be the first $n$ exponentials and select $\sigma_i$ such that $\sum_{i=1}^{n} \sigma_i W_i$ is a classical Rudin-Shapiro polynomial (\cite{RudinS}). Next let $r_i$ denote independent Rademacher functions on $\Omega_{2}$. Furthermore define
$$\Psi := \left(1 + \frac{\log n}{n} \right)^{-1} \left( 1+ \frac{\log n}{n^2} \left(\sum_{i=1}^{n} r_i \right)^{2} \right),$$
where $\int_{\Omega} \Psi d\mu = 1.$ We now define a system of orthogonal functions $\phi_0,\phi_1,\ldots,\phi_n$ on the measure space $(\Omega,\Psi d\mu)$ where $\Omega = \Omega_{1} \times \Omega_{2}$. For $1 \leq i \leq n$ define
$$ \phi_i := \frac{1}{\sqrt{\Psi\left(1 + \frac{\log n}{n} \right)}} \left( r_i -  \frac{\sqrt{\log n}}{\sqrt{n}} \sigma_i W_i \right) $$
where $||\phi_i||_{L^\infty} \leq 1 \times (1+ \frac{\sqrt{\log n}}{\sqrt{n}}) \leq 2$. Next define
$$ \phi_0 :=  \frac{1}{\sqrt{\Psi\left(1 + \frac{\log n}{n} \right)}} \left( \frac{\sqrt{\log n}}{n} \sum_{i=1}^{n}r_i + \frac{1}{\sqrt{n}}\sum_{i=1}^{n} \sigma_i W_i \right).$$
Using that $\frac{1}{\sqrt{\Psi\left(1 + \frac{\log n}{n} \right)}} \leq 1$, $\frac{1}{\sqrt{\Psi\left(1 + \frac{\log n}{n} \right)}} \frac{\sqrt{\log n}}{n} \sum_{i=1}^{n}r_i \leq 1$ and $\left| \frac{1}{\sqrt{n}}\sum_{i=1}^{n} \sigma_i W_i \right| \leq 6$ by \eqref{eq:RSin}, for sufficiently larger $n$, we then have that
$$||\phi_0||_{L^\infty} \leq  1 + 6   \leq 7. $$
We now verify that this system satisfies orthonormality relations. For $1\leq i \leq n$
$$\int_{\Omega} |\phi_i|^2 \Psi d\mu = \int_{\Omega}  \frac{1}{\Psi} \left(1+ \frac{\log n}{n}\right)^{-1}\left( r_i -  \frac{\sqrt{\log n}}{\sqrt{n}} \sigma_i W_i \right)^2 \Psi d\mu $$
$$=\left(1+ \frac{\log n}{n}\right)^{-1}\left(1+ \frac{\log n}{n}\right) =1. $$
For $1\leq i, j \leq n$ and $i \neq j$ we have
$$\int_{\Omega} \phi_i \phi_j \Psi d\mu = \int_{\Omega}  \frac{1}{\Psi}\left(1+ \frac{\log n}{n}\right)^{-1} \left( r_i -  \frac{\sqrt{\log n}}{\sqrt{n}} \sigma_i W_i \right)\times \left( r_j -  \frac{\sqrt{\log n}}{\sqrt{n}} \sigma_j W_j \right) \Psi d\mu $$
$$=\left(1+ \frac{\log n}{n}\right)^{-1}\int_{\Omega}\left( r_i -  \frac{\sqrt{\log n}}{\sqrt{n}} \sigma_i W_i \right)\times \left( r_j -  \frac{\sqrt{\log n}}{\sqrt{n}} \sigma_j W_j \right) d\mu =0. $$
Next we consider $\phi_0$. We have
$$\int_{\Omega} |\phi_0|^2 \Psi d\mu = \left(1+ \frac{\log n}{n}\right)^{-1} \int_{\Omega}  \left( \frac{\sqrt{\log n}}{n} \sum_{i=1}^{n}r_i + \frac{1}{\sqrt{n}}\sum_{i=1}^{n} \sigma_i W_i \right)^2  d\mu $$
$$= \left(1+ \frac{\log n}{n}\right)^{-1}  \left(1+ \frac{\log n}{n}\right)=1. $$
For $1 \leq i \leq n$, we have

$$\int_{\Omega} \phi_0 \phi_i \Psi d\mu = \left(1+ \frac{\log n}{n}\right)^{-1} \int_{\Omega}  \left( \frac{\sqrt{\log n}}{n} \sum_{i=1}^{n}r_i + \frac{1}{\sqrt{n}}\sum_{i=1}^{n} \sigma_i W_i \right) \times \left( r_i -  \frac{\sqrt{\log n}}{\sqrt{n}} \sigma_i W_i \right) d\mu $$
$$= \left(1+ \frac{\log n}{n}\right)^{-1}  \left(\frac{\sqrt{\log n}}{\sqrt{n}}  - \frac{\sqrt{\log n}}{\sqrt{n}}    \right)=0. $$
This completes the verification that the construction gives a uniformly bounded OS. Next we verify the $\psi_2(C)$ condition.
\begin{lemma}The OS $\phi_{0},\phi_{1},\ldots,\phi_{n}$ satisfies the $\psi_2(C)$ condition for some fixed $C$ independent of $n$.
\end{lemma}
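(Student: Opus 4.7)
My plan is to verify the equivalent $L^p$ formulation of the $\psi_2(C)$ condition. Setting $F := \sum_{j=0}^n a_j\phi_j$, it suffices to prove that
\[
\|F\|_{L^p(\Psi\,d\mu)} \lesssim \sqrt{p}\,\Big(\sum_{j=0}^n |a_j|^2\Big)^{1/2}
\]
for every $p \geq 2$. A direct substitution of the definitions yields the identity
\[
\sqrt{\Psi(1+\log n/n)}\cdot F \;=\; G \;:=\; \sum_{i=1}^n b_i r_i \;+\; \sum_{i=1}^n c_i \sigma_i W_i,
\]
with $b_i := a_0\sqrt{\log n}/n + a_i$ and $c_i := a_0/\sqrt n - a_i\sqrt{\log n/n}$. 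The point is that $G$ now sits on the unweighted product space $(\Omega,d\mu)$ as a sum of two genuinely independent pieces, a Rademacher sum on $\Omega_2$ and a Walsh sum on $\Omega_1$.

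To eliminate the density I use the pointwise lower bound $\Psi \geq (1+\log n/n)^{-1}$, which is immediate from the definition. For $p \geq 2$ the exponent $1 - p/2$ is nonpositive, so $\Psi^{1-p/2} \leq (1+\log n/n)^{p/2-1}$, and therefore
\[
\|F\|_{L^p(\Psi\,d\mu)}^p \;=\; (1+\log n/n)^{-p/2}\int |G|^p\,\Psi^{1-p/2}\,d\mu \;\leq\; (1+\log n/n)^{-1}\,\|G\|_{L^p(d\mu)}^p \;\leq\; \|G\|_{L^p(d\mu)}^p,
\]
so the problem reduces to proving $\|G\|_{L^p(d\mu)} \lesssim \sqrt{p}\,(\sum_j |a_j|^2)^{1/2}$.

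The Rademacher summand $G_R := \sum_i b_i r_i$ is handled by classical Khintchine, $\|G_R\|_p \lesssim \sqrt p\,(\sum b_i^2)^{1/2}$, together with the elementary inequality $(x+y)^2 \leq 2x^2 + 2y^2$, which gives $\sum_i b_i^2 \leq 2a_0^2\log n/n + 2\sum_i a_i^2 \lesssim \sum_j a_j^2$. The Walsh summand is the delicate one: applying Lemma \ref{lem:HY} directly to the combined coefficient vector $\{c_i\}$ would cost a factor of $\sqrt{n/\log n}$ from the $a_0$ contribution, which is fatal. The essential trick is the splitting
\[
G_W \;:=\; \sum_i c_i \sigma_i W_i \;=\; \frac{a_0}{\sqrt n}\sum_{i=1}^n \sigma_i W_i \;-\; \sqrt{\frac{\log n}{n}}\,\sum_{i=1}^n a_i\sigma_i W_i.
\]
The first summand is $|a_0|/\sqrt n$ times the Rudin--Shapiro polynomial, whose $L^\infty$ norm is $\leq 6\sqrt n$ by \eqref{eq:RSin}, so it contributes at most $6|a_0| \leq 6(\sum_j a_j^2)^{1/2}$ in every $L^p$. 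For the second summand, Lemma \ref{lem:HY} yields $\sqrt{\log n/n}\cdot\sqrt{pn/\log n}\,(\sum a_i^2)^{1/2} = \sqrt p\,(\sum a_i^2)^{1/2}$, the prefactor $\sqrt{\log n/n}$ precisely absorbing the $\Lambda(p)$ loss of the Walsh system.

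Adding the two bounds gives $\|G_W\|_p \lesssim \sqrt p\,(\sum_j a_j^2)^{1/2}$ for $p\geq 2$, and combined with the Rademacher estimate and the weight-elimination step above this yields the desired $L^p$ inequality and hence the $\psi_2(C)$ condition. The main conceptual obstacle is that the first $n$ Walsh functions are very far from being a $\psi_2$ system (their $\Lambda(p)$ constant grows as $\sqrt{pn/\log n}$), so the argument cannot merely appeal to Lemma \ref{lem:HY} on $\{c_i\}$; one has to exploit the precise algebraic form of the coefficients $c_i$ to isolate the single ``unbalanced'' contribution, which is rescued by the Rudin--Shapiro $L^\infty$ bound, while the remainder carries exactly the shrinkage factor $\sqrt{\log n/n}$ required to tame the Walsh blow-up. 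This is the structural reason $\phi_0$ and the $\sigma_i$ had to be designed as they were.
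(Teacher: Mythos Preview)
Your proof is correct and follows essentially the same approach as the paper: eliminate the weight $\Psi$ using the pointwise lower bound, then control the Rademacher part by Khintchine and the Walsh part by Lemma~\ref{lem:HY} together with the Rudin--Shapiro $L^\infty$ bound~\eqref{eq:RSin}. The only cosmetic difference is bookkeeping: the paper separates off the $a_0\phi_0$ term at the outset and bounds it trivially via $\|\phi_0\|_{L^\infty}\le 7$, whereas you first fold $\phi_0$ into the coefficients $b_i,c_i$ and then have to re-split the Walsh sum to isolate the Rudin--Shapiro polynomial---but this recovers exactly the same estimate.
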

\begin{proof}Let $p\geq 2$, and $\sum_{i=1}^{n} |a_i|^2=1$. We have
$$|| \sum_{i=1}^{n} a_i \phi_i ||_{L^p} =\left( \int_{\Omega} \frac{|\sum_{i=1}^{n} a_i \phi_i|^p }{|\Psi|^{p/2}} \Psi d\mu \right)^{1/p} \leq \left( \int_{\Omega} |\sum_{i=1}^{n} a_i \phi_i|^p d\mu \right)^{1/p} $$
$$\lesssim || a_0 \psi_0 ||_{L^p(\Omega)} + ||\sum_{i=1}^{n} a_i r_i||_{L^p(\Omega_1)} +  \frac{\sqrt{\log n}}{\sqrt{n}} || \sum_{i=1}^{n} a_i \sigma_i W_i||_{L^p(\Omega_2)}. $$
Estimating the first term trivially, the second term using Khintchine's inequality, and the third using Lemma \ref{lem:HY} gives us that
$$|| \sum_{i=1}^{n} a_i \phi_i ||_{L^p(\Omega)} \lesssim \sqrt{p}.$$
This completes the proof.
\end{proof}
Finally, we show that these systems are not uniformly Sidon in $n$.
\begin{lemma}There exists coefficients $\{a_0,a_1,\ldots,a_n\}$ with unit $\ell^1$ norm, such that
$$|| \sum_{i=0}^{n} a_i \phi_i ||_{L^{\infty}(\Omega)} \lesssim \frac{1}{\sqrt{\log n}}.$$
\end{lemma}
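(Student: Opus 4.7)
The plan is to exhibit explicit coefficients that make the Walsh (Rudin--Shapiro) parts of $\phi_0,\phi_1,\ldots,\phi_n$ cancel, leaving behind only the $r_i$ contributions, and then use the weight $\Psi$ to bound the remaining sum in $L^\infty$. Concretely, set
$$b_0 := \sqrt{\log n},\qquad b_i := 1 \quad (1\le i\le n),$$
so that $\sum_{i=0}^n |b_i| = n+\sqrt{\log n}$. Writing $S(\omega):=\sum_{i=1}^n r_i(\omega)$, I will compute that the Walsh terms appearing in $b_0\phi_0 + \sum_{i=1}^n b_i\phi_i$ are
$$\sqrt{\log n}\cdot\tfrac{1}{\sqrt n}\sum_{i=1}^n \sigma_i W_i \;-\; \tfrac{\sqrt{\log n}}{\sqrt n}\sum_{i=1}^n \sigma_i W_i \;=\; 0,$$
while the $r_i$ terms combine to $(1+\log n/n)\,S$. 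After the common prefactor $\bigl(\Psi(1+\log n/n)\bigr)^{-1/2}$ is absorbed and the definition of $\Psi$ is inserted, this gives the pointwise identity
$$b_0\phi_0+\sum_{i=1}^n b_i\phi_i \;=\; \frac{(1+\log n/n)\,S}{\sqrt{1+\tfrac{\log n}{n^2}\,S^2}}.$$

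The main step is then to bound the right-hand side in $L^\infty$. The function $u\mapsto u/\sqrt{1+au^2}$ with $a=\log n/n^2$ is increasing in $|u|$ and tends to $1/\sqrt a = n/\sqrt{\log n}$ as $|u|\to\infty$; hence it is bounded pointwise by $n/\sqrt{\log n}$ for every value of $S(\omega)$. Combined with the trivial bound $1+\log n/n \le 2$ for large $n$, this yields
$$\Bigl\|\,b_0\phi_0+\sum_{i=1}^n b_i\phi_i\,\Bigr\|_{L^\infty(\Omega)} \;\le\; \frac{2n}{\sqrt{\log n}}.$$

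Finally I will renormalize by taking $a_i := b_i/(n+\sqrt{\log n})$, so $\sum_i |a_i|=1$ and
$$\Bigl\|\sum_{i=0}^n a_i\phi_i\Bigr\|_{L^\infty(\Omega)} \le \frac{2n/\sqrt{\log n}}{n+\sqrt{\log n}} \lesssim \frac{1}{\sqrt{\log n}},$$
as required. The only step requiring care is confirming the pointwise cancellation and simplification above; the remaining estimate on $u/\sqrt{1+au^2}$ is elementary. In particular no probabilistic input (Khintchine, $\psi_2$, etc.) is needed for the counterexample direction: the construction was engineered precisely so that for this one distinguished linear combination the Walsh noise and the Rademacher spike conspire to cancel, and the weight $\Psi$ then tames what remains.
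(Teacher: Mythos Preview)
Your argument is correct but follows a different route from the paper's. The paper chooses $a_0=-1/\sqrt{\log n}$ and $a_i=1/n$ for $1\le i\le n$; with this choice the \emph{Rademacher} contributions $-\tfrac{1}{n}S+\tfrac{1}{n}S$ cancel, and what survives is a multiple of the Rudin--Shapiro polynomial $\sum_i\sigma_iW_i$, which is then bounded using $\|\sum_i\sigma_iW_i\|_\infty\le 6\sqrt n$ together with the trivial estimate $\bigl(\Psi(1+\log n/n)\bigr)^{-1/2}\le 1$. You instead take $b_0=+\sqrt{\log n}$, $b_i=1$, which makes the \emph{Walsh} contributions cancel; the leftover is the exact expression $(1+\log n/n)S/\sqrt{1+\tfrac{\log n}{n^2}S^2}$, and you bound it by the elementary inequality $|u|/\sqrt{1+au^2}\le 1/\sqrt a$. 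Both computations are short and valid. The paper's version uses the Rudin--Shapiro bound directly at this step, whereas your version bypasses it here and instead exploits the precise form of the weight $\Psi$ (which was engineered exactly so that $S/\sqrt\Psi$ stays bounded); the Rudin--Shapiro input is still implicitly present in your argument only through the earlier fact that $\|\phi_0\|_\infty\le 7$.
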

\begin{proof}Set $a_{0} = -\frac{1}{\sqrt{\log n}}$ and $a_i=\frac{1}{n}$, for $1\leq i \leq n$. Then
$$ \left| -\frac{1}{\sqrt{\log n}} \psi_0 + \frac{1}{n}\sum_{i=1}^n a_i \phi_i \right|= \frac{1}{\sqrt{\Psi}}\left(1+ \frac{\log n}{n}\right)^{-1/2}  \times $$
 $$\left| - \frac{1}{n} \sum_{i=1}^{n} r_i + \frac{1}{n} \sum_{i=1}^{n} r_i - \frac{1}{\sqrt{n\log n}}\sum_{i=1}^{n} \sigma_n W_n + \frac{\log n }{n^{3/2} }\sum_{i=1}^{n} \sigma_n W_n \right|$$
$$\leq \left(\frac{1}{\sqrt{\log n}} + \frac{\log n}{n} \right) \left|   \frac{1}{\sqrt{n}}\sum_{i=1}^{n} \sigma_n W_n  \right| \lesssim \frac{1}{\sqrt{\log n}}$$
where we have used \eqref{eq:RSin}.
\end{proof}
This completes the proof of Theorem \ref{thm:CE}.

\section{Some related problems}\label{sec:problems}
In this section we record some problems raised by this work.

\begin{problem}\label{pro:proportion}Does there exists a constant $\gamma := \gamma(M,C,\epsilon)$ such that for any OS of size $n$, uniformly bounded by $M$, and satisfying the $\psi_2(C)$ condition, there exists a subset $A \subseteq [n]$ with $|A| \geq (1-\epsilon) n$ such that
$$|| \sum_{j \in A} a_j \phi_j ||_{L^{\infty}} \geq \gamma \sum_{j \in A}|a_j|?$$
\end{problem}

\begin{problem}\label{pro:2f}Is the two, three, or four-fold tensor of a uniformly bounded $\psi_2(C)$ orthonormal system Sidon?
\end{problem}

\begin{problem}\label{pro:Ave}Are all orthonormal $\psi_2(C)$ averages equivalent? In other words, if $\phi_1,\phi_2,\ldots,\phi_n$ are uniformly bounded, orthonormal and $\psi_2(C)$ can the inequality \eqref{eq:radMaj} be reversed?
\end{problem}

\begin{problem}\label{pro:Decomp}Is a uniformly bounded $\psi_2(C)$ OS a finite union of Sidon systems?
\end{problem}

\begin{problem}\label{pro:StrongElton}Let $x_1,x_2,\ldots,x_n$ be a set of unit vectors in a Banach space $X$. Assume that
$$ \gamma \sum_{i=1}^{n}|\lambda_i| \leq \int \left|\left| \sum_{i=1}^{n} \lambda_i r_i(\omega) x_i \right|\right|d\omega $$
for all scalar sequences $\{\lambda_i\}$ and $\gamma>0$. Does there exists a $M:=M(\gamma)$ and $\beta:=\beta(\gamma)>0$ such that $\{1,2,\ldots,n\}$ may be partitioned into $M$ sets $\{A_j\}_{j=1}^M$ such that
$$\beta \sum_{i \in A_j}|\lambda_i| \leq  \left|\left| \sum_{i\in A_j} \lambda_i x_i \right|\right|?$$
\end{problem}

\textbf{Note added:} G. Pisier has recently proven that the two-fold tensor of a $\psi_2(C)$ orthonormal system is Sidon providing an affirmative solution to Problem \ref{pro:2f}. See \cite{PisierNew}. In addition, Pisier has shown that that weaker hypothesis of Rademacher-Sidonicty implies that the four-fold tensor is Sidon. This raises the problem of deciding if Rademacher-Sidonicty implies that the three or two-fold tensor is Sidon. This would follow from an affirmative answer to Problem \ref{pro:Ave}.

\section{Appendix}
This appendix contains a number of results needed elsewhere in this paper which are well-known but for which we were unable to locate a proper reference.

First we need a complex variant of Slepian's comparison lemma. Let us recall the standard real version.
\begin{lemma}\label{lem:slepian}Let $X_t$ and $Y_t$ be real Gaussian process such that, for all $s,t$, one has
$$\mathbb{E}|X_s -X_t|^2  \leq \mathbb{E}|Y_s -Y_t|^2.$$
Then
$$\mathbb{E} \sup_{t \in T} X_t \leq \mathbb{E} \sup_{t \in T} Y_t.$$
\end{lemma}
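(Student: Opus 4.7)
The plan is the classical interpolation argument of Slepian and Fernique. First, by monotone convergence it suffices to treat a finite index set $T = \{1,\ldots,n\}$. Second, I would smooth out the maximum via the log-sum-exp function $L_\beta(x) := \beta^{-1}\log \sum_{i=1}^n e^{\beta x_i}$, which satisfies $0 \leq L_\beta(x) - \max_i x_i \leq \beta^{-1}\log n$ and whose Hessian $\partial_i\partial_j L_\beta = \beta(p_i\delta_{ij} - p_i p_j)$, with $p_i := e^{\beta x_i}/\sum_k e^{\beta x_k}$, is non-positive off the diagonal and has vanishing row sums.

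After realizing $X$ and $Y$ as independent centered Gaussian processes on a common probability space, I would introduce the interpolation $Z(u) := \sqrt{u}\,X + \sqrt{1-u}\,Y$ for $u \in [0,1]$ and set $F(u) := \mathbb{E}[L_\beta(Z(u))]$, so that $F(0) = \mathbb{E} L_\beta(Y)$ and $F(1) = \mathbb{E} L_\beta(X)$. Applying Stein's identity $\mathbb{E}[G_i h(G)] = \sum_j \Sigma_{ij}\,\mathbb{E}[\partial_j h(G)]$ separately to $X$ and $Y$ (the factors $\sqrt{u}$ and $\sqrt{1-u}$ coming from the chain rule cancel the $1/(2\sqrt{u})$ and $1/(2\sqrt{1-u})$ from differentiating $Z$), the derivative reduces to the symmetric bilinear form
\begin{equation*}
F'(u) \;=\; \tfrac12\sum_{i,j}\mathbb{E}\bigl[\partial_i\partial_j L_\beta(Z(u))\bigr]\bigl(\Sigma^X_{ij} - \Sigma^Y_{ij}\bigr),
\end{equation*}
where $\Sigma^X,\Sigma^Y$ denote the covariance matrices of $X$ and $Y$.

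The row-sum-zero property of the Hessian lets me replace the covariance differences by squared-distance differences: since $\sum_j \partial_i\partial_j L_\beta = 0$, the identity $\sum_{i,j} a_{ij}c_{ij} = -\tfrac12 \sum_{i,j} a_{ij}(c_{ii}+c_{jj}-2c_{ij})$ (valid for symmetric $a$ with zero row sums) gives
\begin{equation*}
F'(u) \;=\; -\tfrac14\sum_{i,j}\mathbb{E}\bigl[\partial_i\partial_j L_\beta(Z(u))\bigr]\bigl(\mathbb{E}|X_i-X_j|^2 - \mathbb{E}|Y_i-Y_j|^2\bigr).
\end{equation*}
The off-diagonal Hessian entries $-\beta p_i p_j$ are non-positive, the hypothesis makes each squared-distance difference non-positive, and the diagonal contributions vanish since $|X_i-X_i|^2 = 0$. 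Hence $F'(u)\leq 0$, so $F(1) \leq F(0)$, and letting $\beta \to \infty$ with the uniform error bound $\beta^{-1}\log n$ yields the conclusion for finite $T$. A routine monotone-convergence argument over finite subsets extends to general $T$.

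The main obstacle I foresee is purely technical: justifying differentiation under the expectation and the application of Stein's identity. Since $L_\beta$ is $1$-Lipschitz in the sup norm and its first and second partials are uniformly bounded (by $1$ and $\beta$ respectively), Gaussian tail bounds on $Z(u)$ provide the required dominated convergence, but these estimates should be made explicit. A secondary concern, if one allows non-centered processes, is that Stein's identity acquires mean terms; this is sidestepped by the standard convention in the Gaussian literature, and in the paper's intended usage, that \emph{Gaussian process} means centered.
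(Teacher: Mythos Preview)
Your argument is the classical Slepian--Fernique interpolation proof and is correct as outlined; the technical caveats you flag (dominated convergence for differentiating under $\mathbb{E}$, and the centering convention) are the right ones and are handled by exactly the bounds you indicate.

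There is nothing to compare against: the paper does not prove this lemma. It is stated in the appendix only as ``the standard real version'' of Slepian's comparison inequality, recalled without proof as a known ingredient for the complex variant (Proposition~\ref{prop:Cslepian}) that follows. Your write-up therefore supplies strictly more than the paper does on this point.
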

We start by introducing some additional notation. Let $Z_t$ denote a complex Gaussian process and $Z_t'$ an independent copy of $Z_t$. Define
$$\tilde{Z}_{t}:= \Re[Z_t]+\Im[Z_t'].$$
For technical reasons the real-valued Gaussian process $\tilde{Z}_{t}$ is, at times, more convenient to work with than $Z_t$. The next lemma shows that the expectations of the suprmemum of these two processes are comparable.

\begin{lemma}\label{lem:CtoRcomp}In the notation above we have
$$ \mathbb{E} \sup_{t \in T}  |Z_t|   \lesssim  \mathbb{E}  \sup_{t \in T} | \tilde{Z}_t|   \lesssim  \mathbb{E}  \sup_{t \in T} | Z_t|.  $$
\end{lemma}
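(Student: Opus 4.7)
The plan is to prove the two bounds separately; both follow from elementary probabilistic manipulations, and no deep tool (such as Slepian or Talagrand) is needed here.

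For the right-hand inequality $\mathbb{E}\sup_{t\in T}|\tilde{Z}_t|\lesssim \mathbb{E}\sup_{t\in T}|Z_t|$, I would simply apply the triangle inequality pointwise:
$$|\tilde{Z}_t| \;=\; |\Re Z_t + \Im Z_t'| \;\le\; |Z_t| + |Z_t'|,$$
take the supremum over $t$, and then take expectations. Since $Z'$ is an independent copy of $Z$ we get $\mathbb{E}\sup_t |Z_t'| = \mathbb{E}\sup_t |Z_t|$, which yields the bound with constant $2$.

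The left-hand inequality $\mathbb{E}\sup_{t\in T}|Z_t|\lesssim \mathbb{E}\sup_{t\in T}|\tilde{Z}_t|$ is the more interesting half. Write $X_t=\Re Z_t$, $Y_t=\Im Z_t$, $Y_t'=\Im Z_t'$, so that $\tilde{Z}_t = X_t + Y_t'$ and, crucially, $X$ and $Y'$ are \emph{independent} centered real Gaussian processes (independence follows because $Z'$ is an independent copy of $Z$). Since $|Z_t|\le |X_t|+|Y_t|$ and $\mathbb{E}\sup_t|Y_t|=\mathbb{E}\sup_t|Y_t'|$, it suffices to establish the two inequalities
$$\mathbb{E}\sup_t |X_t| \;\le\; \mathbb{E}\sup_t |X_t+Y_t'| \quad\text{and}\quad \mathbb{E}\sup_t |Y_t'| \;\le\; \mathbb{E}\sup_t |X_t+Y_t'|.$$
Each is an instance of the same general Jensen contraction: if $U$ and $V$ are independent centered Gaussian processes indexed by $T$, then conditioning on $U$ gives $\mathbb{E}[U_t+V_t \mid U]=U_t$, so by Jensen's inequality applied to $x\mapsto |x|$,
$$|U_t| \;=\; \bigl|\mathbb{E}[U_t+V_t\mid U]\bigr| \;\le\; \mathbb{E}\bigl[\,|U_t+V_t|\,\big|\,U\,\bigr] \;\le\; \mathbb{E}\!\left[\sup_{s\in T}|U_s+V_s|\,\Big|\,U\right]\!.$$
Taking the supremum over $t\in T$ on the left and then the outer expectation yields $\mathbb{E}\sup_t|U_t|\le \mathbb{E}\sup_t|U_t+V_t|$. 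Applying this once with $(U,V)=(X,Y')$ and once with the roles reversed $(U,V)=(Y',X)$ gives both of the required inequalities, and combining them produces $\mathbb{E}\sup_t|Z_t|\le 2\,\mathbb{E}\sup_t|\tilde{Z}_t|$.

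There is no real obstacle; the only minor technical point is justifying that one may pull $\sup_t$ inside the conditional expectation, which is immediate from $|U_t+V_t|\le \sup_s|U_s+V_s|$ and monotonicity of conditional expectation (assuming the standard measurability convention that the supremum in question is a measurable random variable, as is the case for all processes of interest in this paper, which are parametrized over countable dense subsets of the relevant index set).
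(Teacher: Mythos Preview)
Your proof is correct and follows essentially the same route as the paper's. Both arguments prove the upper bound $\mathbb{E}\sup_t|\tilde Z_t|\le 2\,\mathbb{E}\sup_t|Z_t|$ by the pointwise triangle inequality, and both prove the lower bound by using Jensen's inequality together with independence and the centering of the pieces to show that $\mathbb{E}\sup_t|\tilde Z_t|$ dominates each of $\mathbb{E}\sup_t|\Re Z_t|$ and $\mathbb{E}\sup_t|\Im Z_t|$; the paper just writes this out directly as $\mathbb{E}_{\omega_1}\mathbb{E}_{\omega_2}\sup_t|\Re Z_t+\Im Z_t'|\ge \mathbb{E}_{\omega_1}\sup_t|\Re Z_t+\mathbb{E}_{\omega_2}\Im Z_t'|$ rather than packaging it as the general contraction $\mathbb{E}\sup_t|U_t|\le\mathbb{E}\sup_t|U_t+V_t|$ for independent centered $V$.
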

\begin{proof}Clearly $\mathbb{E} \sup_{t \in T} | Z_t|$ is greater than both $\mathbb{E}  \sup_{t \in T} | \Re Z_t|$ and $\mathbb{E}  \sup_{t \in T} | \Im Z_t|$. We claim that $\mathbb{E} \sup_{t \in T} | \tilde{Z}_t|$ majorizes both of these quantities as well. Indeed
$$\mathbb{E} \sup_{t \in T} \left| \tilde{Z}_t \right| = \mathbb{E}_{\omega_1} \mathbb{E}_{\omega_2}\sup_{t \in T}  \left| \Re[Z_t] + \Im[Z_t'] \right|$$
 $$\geq \mathbb{E}_{\omega_1} \sup_{t \in T} \left|  \Re[Z_t] + \mathbb{E}_{\omega_2} \Im[Z_t'] \right| \geq \mathbb{E}  \sup_{t \in T} \left| \Re Z_t \right|.$$
An analogous argument shows that $\mathbb{E} \sup_{t \in T} \left| \tilde{Z}_t\right| \geq \mathbb{E}  \sup_{t \in T} \left| \Im Z_t \right|$. We now have that
$$\mathbb{E} \sup_{t \in T} \left| Z_t \right| \leq \mathbb{E}  \sup_{t \in T} \left| \Re [Z_t] \right| +  \mathbb{E}  \sup_{t \in T} \left| \Im [Z_t] \right| \leq 2 \mathbb{E}  \sup_{t \in T} \left| \tilde{Z}_t \right|.$$
This establishes the first inequality. Similarly, using the definition of $X_t$, we have
$$ \mathbb{E} \sup_{t\in T} \left|\tilde{Z}_t \right| \leq \mathbb{E}  \sup_{t \in T} \left| \Re[ Z_t]\right| +  \mathbb{E}  \sup_{t \in T} \left| \Im [Z_t]\right| \leq 2 \mathbb{E}  \sup_{t \in T} \left| Z_t\right|.$$
This completes the proof.
\end{proof}

\begin{proposition}\label{prop:Cslepian}Let $Z_t$ and $W_t$ be Gaussian process such that
$$\mathbb{E}|Z_s -Z_t|^2  \leq \mathbb{E}|W_s -W_t|^2.$$
Then
$$\mathbb{E} \sup_{t \in T} Z_t \lesssim \mathbb{E} \sup_{t \in T} |W_t|.$$
\end{proposition}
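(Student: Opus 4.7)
The plan is to reduce to the real-valued Slepian lemma (Lemma \ref{lem:slepian}) by introducing the auxiliary real Gaussian processes $\tilde Z_t := \Re Z_t + \Im Z_t'$ and $\tilde W_t := \Re W_t + \Im W_t'$ from Lemma \ref{lem:CtoRcomp}, where primes denote independent copies.

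First I would verify that the increment hypothesis transfers. Using independence of $Z_t$ from $Z_t'$ together with the fact that $\Im Z_t'$ has the same distribution as $\Im Z_t$, a direct computation gives
$$\mathbb{E}|\tilde Z_s - \tilde Z_t|^2 = \mathbb{E}(\Re Z_s - \Re Z_t)^2 + \mathbb{E}(\Im Z_s - \Im Z_t)^2 = \mathbb{E}|Z_s - Z_t|^2,$$
and analogously $\mathbb{E}|\tilde W_s - \tilde W_t|^2 = \mathbb{E}|W_s - W_t|^2$. The assumption of the proposition therefore reads $\mathbb{E}|\tilde Z_s - \tilde Z_t|^2 \leq \mathbb{E}|\tilde W_s - \tilde W_t|^2$, so Lemma \ref{lem:slepian} applied to the real processes $\tilde Z_t, \tilde W_t$ yields $\mathbb{E}\sup_{t\in T} \tilde Z_t \leq \mathbb{E}\sup_{t\in T}\tilde W_t$.

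Finally I would chain this bound with two applications of Lemma \ref{lem:CtoRcomp} and the standard Gaussian symmetry trick. Since $\tilde Z_t$ is a centered real Gaussian process, $(\tilde Z_t)$ and $(-\tilde Z_t)$ have the same law, so the pointwise inequality $\sup_t |\tilde Z_t| \leq \sup_t \tilde Z_t + \sup_t(-\tilde Z_t)$ gives $\mathbb{E}\sup_t |\tilde Z_t| \leq 2\mathbb{E}\sup_t \tilde Z_t$. Putting everything together,
$$\mathbb{E}\sup_{t\in T}|Z_t| \lesssim \mathbb{E}\sup_{t \in T}|\tilde Z_t| \leq 2\mathbb{E}\sup_{t\in T} \tilde Z_t \leq 2\mathbb{E}\sup_{t\in T} \tilde W_t \leq 2\mathbb{E}\sup_{t\in T}|\tilde W_t| \lesssim \mathbb{E}\sup_{t\in T}|W_t|,$$
which is the desired inequality. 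The left-hand side $\mathbb{E}\sup_t Z_t$ in the proposition statement should be read as $\mathbb{E}\sup_t|Z_t|$, which is how the proposition is invoked in the proof of Theorem \ref{thm:tensorImplies}.

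The only delicate points are the increment computation in the first step (which hinges on independence of the copy $Z_t'$ eliminating the real/imaginary cross-correlation so that the squared distances match exactly) and the symmetry bound $\mathbb{E}\sup_t |\tilde Z_t| \leq 2\mathbb{E}\sup_t \tilde Z_t$, which requires $\mathbb{E}\sup_t \tilde Z_t \geq 0$ (automatic since $\mathbb{E}\sup_t \tilde Z_t \geq \mathbb{E}\tilde Z_{t_0} = 0$ for any fixed $t_0 \in T$). Every other link in the chain is a direct appeal to a preceding lemma.
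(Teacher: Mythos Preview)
Your argument follows the paper's exactly: pass to the real auxiliary processes $\tilde Z_t,\tilde W_t$ via Lemma~\ref{lem:CtoRcomp}, invoke the real Slepian lemma, and sandwich back with Lemma~\ref{lem:CtoRcomp}. You even supply the increment identity $\mathbb{E}|\tilde Z_s-\tilde Z_t|^2=\mathbb{E}|Z_s-Z_t|^2$ that the paper leaves implicit.

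One slip worth fixing: the pointwise inequality $\sup_t|\tilde Z_t|\le\sup_t\tilde Z_t+\sup_t(-\tilde Z_t)$ is false in general --- take $T=\{t_0\}$ with $\tilde Z_{t_0}\ne 0$, so the right-hand side is identically $0$. The standard remedy is to center at a fixed $t_0$ first: since both $\sup_t(\tilde Z_t-\tilde Z_{t_0})$ and $\sup_t(\tilde Z_{t_0}-\tilde Z_t)$ are nonnegative pointwise, one does have
\[
\sup_t|\tilde Z_t-\tilde Z_{t_0}|\le \sup_t(\tilde Z_t-\tilde Z_{t_0})+\sup_t(\tilde Z_{t_0}-\tilde Z_t),
\]
which after taking expectations and using symmetry gives $\mathbb{E}\sup_t|\tilde Z_t|\le 2\,\mathbb{E}\sup_t\tilde Z_t+\mathbb{E}|\tilde Z_{t_0}|$. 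The residual single-point term is harmless in the intended application. The paper's own proof simply asserts that Slepian carries $\mathbb{E}\sup_t|\tilde Z_t|$ to $\mathbb{E}\sup_t|\tilde W_t|$ without addressing this passage at all, so modulo this correction your write-up is already more careful than the original.
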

\begin{proof}
By Lemma \ref{lem:CtoRcomp} we have
$$\mathbb{E} \sup_{t \in T} Z_t \lesssim \mathbb{E} \sup_{t \in T} |\tilde{Z}_t|.$$
Applying the Seplian's Lemma \ref{lem:slepian} to $\tilde{Z}_t$ and $\tilde{W}_t$ we have the above is
$$ \leq \mathbb{E} \sup_{t \in T} |\tilde{W}_t|. $$
Applying Lemma \ref{lem:CtoRcomp} we may further bound this by
$$\mathbb{E} \sup_{t \in T} |W_t|.$$
This completes the proof.
\end{proof}

\texttt{J. Bourgain}
\textit{bourgain@math.ias.edu}

\texttt{M. Lewko}
\textit{mlewko@gmail.com}

\end{document}